\documentclass{article}
\usepackage[english]{babel}
\usepackage{amsmath,amssymb,xcolor, hyperref, stix}

\numberwithin{equation}{section}

\usepackage{mhequ,mhenvs}

\usepackage[top=4.0cm, bottom=4.0cm, left=3.8cm, right=3.8cm]{geometry}

\def\tr{\mathrm{tr}}
\def\u{\underline{u}}
\def\pr{\mathrm{pr}}
\def\sb{\tau_{\mathrm{fin}}}
\def\M{R}



\newcommand{\RR}{\mathbf{R}}
\newcommand{\NN}{\mathbf{N}}
\newcommand{\ZZ}{\mathbf{Z}}

\newcommand{\TT}{\mathbf{T}}


\newcommand{\EE}{\mathbb{E}}

\newcommand{\PP}{\mathbb{P}}


\newcommand{\mL}{\mathcal{L}}
\newcommand{\mN}{\mathcal{N}}
\newcommand{\mO}{\mathcal{O}}
\newcommand{\mP}{\mathcal{P}}

\newcommand{\mC}{\mathcal{C}}

\newcommand{\mM}{\mathcal{M}}
\newcommand{\mT}{\mathcal{T}}

\newcommand{\mF}{\mathcal{F}}

\newcommand{\mX}{\mathcal{X}}



\newcommand{\mf}[1]{\mathfrak{#1}}

\newcommand{\mt}[1]{\texttt{#1}}



\newcommand{\ve}{\varepsilon}

\newcommand{\vt}{\vartheta}

\newcommand{\bigslant}[2]{{\raisebox{.1em}{$#1$}\left/\raisebox{-.1em}{$#2$}\right.}}

\newcommand*{\ud}{\mathrm{\,d}}
 

\begin{document}

\title{Quantitative instability for stochastic scalar reaction-diffusion equations}
\author{Alexandra Blessing (Neam\c tu), Tommaso Rosati}

\maketitle

\begin{abstract}

This work studies the instability of stochastic scalar reaction diffusion
equations,
driven by a multiplicative noise that is white in time and smooth in space,
near to zero, which is assumed to be a fixed point for the equation.~We prove
that if the Lyapunov exponent at
zero is positive, then the flow of non-zero solutions admits uniform
bounds on small negative moments.~The proof
builds on ideas from stochastic homogenisation.~We require suitable
corrector estimates for the solution to a Poisson
problem involving an infinite-dimensional projective process, together with
a linearisation step that hinges on quantitative parametrix-like arguments. Overall, we are able to 
construct an appropriate Lyapunov functional for the nonlinear dynamics and address some
problems left open in the literature.

\end{abstract}

\setcounter{tocdepth}{2}
\tableofcontents

\section{Introduction}

This work studies long-time properties of solutions to
stochastic scalar reaction-diffusion equations of the form
\begin{equs}[e:main]
\ud u = \Delta u \ud t + f(u) \ud t + \sigma(u) \ud W_{t} \;, \quad
u(0, x) = u_{0} (x) \geqslant 0\;, \quad \forall x \in \TT^{d}\;, \quad t \geqslant 0 \;.
\end{equs}
Here $ \TT^{d} $ is the $ d- $dimensional torus and the solution $ u \colon [0, \infty)
\times\TT^{d} \to \RR $ a scalar. Moreover,  $ f$ and $\sigma$ are $ C^{1}$  non-linearities such that $ f(0)=
\sigma(0) = 0 $, and $ W $ a noise that is white in time and
sufficiently smooth in space, see Assumption~\ref{assu:nonlinearities},
so that the constant function $ u \equiv 0 $ is a solution to the
equation. Our objective is the study of \eqref{e:main} in the case in which
$ u \equiv 0 $ is a linearly unstable fixed point, meaning that the Lyapunov
exponent $ \lambda = \lim_{t \to \infty} t^{-1} \log{\| v_{t} \|_{L^{1}}}  $
associated to the linearised equation
\begin{equs}
\ud v = \Delta v \ud t + f^{\prime} (0) v \ud t + \sigma^{\prime} (0) v \ud
W_{t} \;, \qquad v(0, x) = v_{0} (x) >  0 \;,
\end{equs}
satisfies $ \lambda > 0 $. In this case, 
solutions to \eqref{e:main} should not come close to
zero.
Indeed, our main result,
Theorem~\ref{thm:moment-estimate}, states that under the assumption $ \lambda >
0 $ we can find $ \eta, \zeta,
C_{1}, C_{2} > 0 $ such that uniformly over all initial conditions $
u_{0} \geqslant 0 $ and all times $ t \geqslant 0 $
\begin{equs}[e:nmmt]
 \EE \Big[ \Big( \min_{x \in \TT^{d}} u(t,x)
\Big)^{- \eta} \Big] < C_{1} e^{- \zeta t}\Big( \min_{x \in \TT^{d}} u(0,x)
\Big)^{- \eta}   + C_{2} \;.
\end{equs}
In other words, we prove that $ V(u) = \left( \min_{x \in
\TT^{d}} u (x) \right)^{- \eta}$ is a Lyapunov functional for
\eqref{e:main}, for sufficiently small values of the parameter $ \eta >0 $. We
call this a quantitative estimate, as it guarantees for instance that $
\PP ( \min u_{t} \leqslant \ve) $ decays at least polynomially and uniformly in
time as $ \ve \downarrow 0 $.

Our result is intuitively clear, since a positive
Lyapunov exponent should guarantee the existence of a repelling region about zero,
in the sense that a solution to \eqref{e:main} that lies in that region will
eventually escape it (although this has so far been left without proof in
the case of stochastic PDEs).
However, the time that is required for escaping this region can in principle be
arbitrarily long, potentially leading to invariant measures of \eqref{e:main},
if these exist, that are heavy-tailed close to zero. This reflects the fact
that the empirical, finite-time Lyapunov exponent may be negative for long
times even if the limit Lyapunov exponent is positive. The latter is a classical
problem in the study of dynamical systems, for example recently
addressed in the context of the stochastic Allen--Cahn equation \cite{BEN23}.

From this perspective, our contribution is to prove that the opposite is true,
and our result can be rephrased as providing
upper bounds on the exit time from such repelling region. This perspective also
explains our proof idea: our approach to
obtain \eqref{e:nmmt} builds on tools from
stochastic homogenisation that are similar to
those used to study fluctuations of the finite-time Lyapunov exponent about the
true Lyapunov exponent \cite{DunlapGuKomo, gu2023kpz}.

To understand better the relevance of our work, it is useful to fix as an
example an equation of Allen--Cahn type:
\begin{equs}[e:ac]
\ud u = \Delta u \ud t +   \frac{\alpha}{2} \,(u^{3}-u)   \ud t +
\frac{\sigma}{2}  (1- u^{2}) \ud W_{t} \;,
\end{equs}
where $ \alpha \in \RR $ and $ \sigma \geqslant 0 $ are parameters. In this
case if we choose initial data with values in $ [-1, 1] $, then solutions exist
for all times and we can study their ergodic behaviour (note that our
main result holds in a more general setting, and allows for solutions to potentially
blow up in finite time).

In absence of noise ($ \sigma=0 $) for $
\alpha < 0 $ the fixed points $ u \equiv \pm 1 $ are stable and the equation is a
standard model in the study of metastability. In presence of noise ($ \sigma >
0 $), the only invariant
probability measures of the equation are interpolations of Dirac measures at $ u
\equiv \pm1 $. The dynamical picture changes at a specific value $
\alpha_{\star}(\sigma) \geqslant 0 $, were the fixed points $ \pm1 $ switch
from being linearly stable to being unstable. This happens precisely at $ \alpha_{\star}(\sigma) = -
\overline{\lambda} (\sigma) $, where $ \overline{\lambda}(\sigma)  $ is the
(non-positive, because the noise is in It\^o form) Lyapunov exponent associated to
$ \ud v = \Delta v + \sigma v \ud W $. For example $ \alpha_{\star}
(\sigma) = \sigma^{2}/2 $  if $ W $ is a space-independent Brownian
motion, since the linear equation reduces to geometric Brownian motion for
space-independent initial data.~For results on metastability for the stochastic Allen-Cahn equation in the small noise regime we refer to~\cite{BerglundGentz}.

Our result proves that this change at the level of the linearised
equation corresponds to the creation of a new non-trivial invariant measure,
for which we can obtain appropriate tail estimates.
Uniqueness of such non-trivial invariant measure is a well-understood consequence of order
preservation, as long as the noise is sufficiently non-degenerate
\cite{Butkovsky}. 

The strength of our result lies in the
fact that we are able to tackle the entire unstable regime $ \lambda > 0 $
(equivalently, in the
example above $ \alpha > \alpha_{\star}(\sigma)$) and control the mentioned
negative moments. In particular, this should be compared to the only known result in
this direction, which is a recent work by
Khoshnevisan, Kim, Mueller and Shiu \cite{Khoshnevisan} concerning a class of equations similar to \eqref{e:ac} in
the case of one-dimensional space-time white noise. Here the authors could
prove existence of non-trivial invariant measures in the asymptotic regime $
\alpha \gg \alpha_{\star} (\sigma)  $ without our tail estimates.
Covering the whole unstable regime was left as an open problem  \cite[Section
2.4]{Khoshnevisan}.

Our approach to prove \eqref{e:nmmt} passes through the analysis of so-called
Furstenberg--Khasminskii formulas for the Lyapunov exponent, and estimates on
the distance between the sample Lyapunov exponent and the limit Lyapunov
exponent by means of corrector estimates that are typical in stochastic
homogenisation. At its heart, this approach builds on the study of
spectral gaps and ergodic properties for the ``projective process'' $ \pi_{t} = v_{t}/ \| v_{t}
\|_{L^{1}} $ associated to the linearised equation.

In the present order-preserving case (Equation~\eqref{e:main} is scalar and
satisfies a maximum principle), such spectral gaps on the projective
space are well understood: see \cite{Sinai1991Buergers, gu2023kpz,
Rosati22Synchro} and the references therein. 
This work relies on such understanding to implicitly construct a Lyapunov functional for
the nonlinear system through the analysis of a corrector, which is obtained by solving a Poisson
problem involving the generator of $ \pi_{t} $: see Section~\ref{sec:lyap-func}.

Here our main contribution lies in proving a uniform bound on the
corrector and in particular on its Fr\'echet derivative.
To bound the Fr\'echet derivative it seems fruitful to follow an
approach that uses Hilbert's metric on the projective space, cf.\ Lemma~\ref{lem:frechet}. In particular, this bound seems different from
previous estimates on the corrector obtained by Gu and Komorowski
\cite{gu2023kpz, DunlapGuKomo}, see the detailed discussion in Remark~\ref{rem:bdd}.

The final step in our proof is a linearisation argument that allows to relate
\eqref{e:main} to its linearisation at $ u \equiv  0 $. Here we use a
solution-dependent exponential transformation to reduce the problem to that of a PDE with
random coefficients. Then we conclude with a suitable cut-off argument: as in
the previous step, this step too relies heavily on the order-preservation of
the system. In particular, to deal with situations in which the initial
condition is small but very irregular, we must employ parametrix-like bounds
that allow for potential singularities at the starting time, and we rely on
the quantitative heat kernel estimates established by Perkowski and Van Zuijlen
\cite{PerkowskiVanZuijlen23HeatKernel}. See the discussion at the beginning of
Section~\ref{sec:fully-nonlinear}.

\subsubsection*{Literature}
The analysis of instabilities in reaction-diffusion
equations such as \eqref{e:ac} is relevant in many classical models for
population dynamics and is related to questions of metastability,
coexistence and persistence \cite{EtheridgeMCF, schreiber2011persistence, hening2018coexistence}. 
In particular, we highlight that it is a challenging open problem to extend the instability or
coexistence results of Hening and Nguyen \cite{hening2018coexistence} for
\emph{systems} of SDEs, to systems of stochastic reaction-diffusion equations.
This would require for example an analysis of the projective
process in the spirit of Hairer and Rosati \cite{hairer2023spectral}, and it
seems challenging to obtain suitable corrector estimates in such setting.

From the perspective of random dynamical systems, we highlight a recent work by
Blessing, Blumenthal and Engel concerning the quantitative study of finite-time Lyapunov
exponents~\cite{BEN23} and noise-induced synchronisation for the stochastic Allen-Cahn equation similar to
\eqref{e:ac}. Even if in the log run synchronisation by noise occurs~\cite{FlandoliGessScheutzow2017SynchroByNoiseOrderPreserving}, it was
proven that on finite time scales, the Lyapunov exponents are positive with
positive probability depending on the value of the parameter $\alpha\in\RR$. These results on finite-time Lyapunov exponents were
extended beyond the order-preserving case by Blessing and
Bl\"omker in~\cite{BN23}. 

As mentioned, one of the main future challenges for our work is to
overcome the use of order preservation.
Here, the interest lies not only in systems of reaction diffusion
equations~\cite{GT22, hening2018coexistence}, but in particular in
models from fluid dynamics. For example, it is an open problem to prove
non-uniqueness of invariant measures for the stochastic Navier--Stokes equations
with degenerate noise. 
Instead, the finite-dimensional case is better understood.~In the setting of the
three-dimensional Lorentz '69 model (a toy model for stochastic Navier--Stokes), Coti
Zelati and Hairer \cite{MR4244269} have proven the mentioned non-uniqueness.~In
particular, the authors employ a similar construction of a Lyapunov functional
through a corrector (see also the references therein). Similar methods were
employed also in the study of mixing, for example 
by Blumenthal, Coti Zelati and Gvalani \cite{MR4597327}, by Gess and
Yaroslavtsev \cite{gess2021stabilization}, and by Bedrossian, Blumenthal and
Punshon-Smith \cite{bedrossian2021almost} with the objective of obtaining quantitative estimates
on Lagrangian chaos (a finite-dimensional system driven by an
infinite-dimensional vector field). One of the purposes of this work is to
provide a first extension of such tools to infinite dimensions.

Finally, we highlight the link with the study of the KPZ and Burgers' equation,
which would correspond to choosing $ f =0 $ and $ \sigma(u) =u $ in dimension $
d=1 $ with $ W $ space-time white noise.
For example, Dunlap, Gu and Komorowski have used a similar approach to ours,
to obtain Gaussian fluctuations of the sample Lyapunov exponent around the true Lyapunov
exponent, linking it to the $ 1:2:3 $ KPZ fixed point scaling on large volumes
\cite{gu2023kpz, DunlapGuKomo}. The study of other kinds of fluctuations, such
as LDPs, remains open. Even our result seems challenging to extend to irregular
noise and this analysis is left for some future work.~Indeed, in the case of space-time white noise, most of the estimates for the
corrector in Section~\ref{sec:lyap-func-constr} would break down and it is unclear how to
replace them (see also Remark~\ref{rem:bdd}).
Furthermore, our analysis of the nonlinear equation builds on lower bounds for
fundamental solutions of SPDEs (see Section~\ref{sec:lbds} and
\cite{PerkowskiVanZuijlen23HeatKernel}), which cease to be integrable when the
noise is too irregular.

\subsection*{Structure of the paper}
This work is divided as follows. In Section~\ref{sec:main} we introduce the
precise setting of our paper and statement of the main result,
Theorem~\ref{thm:moment-estimate}. The proof of this theorem builds on an
analogous statement in the case of linear equations: this is the
content of Section~\ref{sec:lyap-func}, and in particular
Theorem~\ref{thm:linearised-1}, which requires a stochastic homogenisation
argument and suitable corrector estimates. Finally, the proof of
Theorem~\ref{thm:moment-estimate} requires a linearisation argument that is
provided in Section~\ref{sec:linearisation}. The last sections,
\ref{sec:invariant} and \ref{sec:lbds}, contain technical aspects of our
analysis: spectral gap estimates for a ``projective process'' and lower bounds
to fundamental solutions of stochastic PDEs, respectively.

\subsection*{Acknowledgments} TR is very grateful to Martin Hairer for
suggesting this problem and for some early discussions. The authors thank
Willem van Zuijlen for useful discussions regarding~\cite{PerkowskiVanZuijlen23HeatKernel}. AB acknowledges support by
the DFG grant 543163250.

\subsection*{Notation} Let $ \NN = \{ 0, 1,2, \dots \}$, $ \NN_{+} = \NN
\setminus \{ 0 \} $ and $ \ZZ = \NN \cup - \NN $. We denote with $
\TT^{d} $ the $ d $-dimensional torus  $ \TT^d =\RR^d /\ZZ^d$, for $ d \in
\NN_{+} $.
For any \(k\in \NN, d \in \NN_{+}\)  we write \(C^{k}(\TT^{d} )\) for the space of $k$ times
differentiable functions \(\varphi \colon \TT^{d} \to \RR\) (the derivatives
being continuous).
We write $ \| \varphi \|_{\infty} = \sup_{x \in \TT^{d}} | \varphi(x) |$ for the
uniform norm of a function in $ \varphi \in C(\TT^{d})$. Next we write $
\mathbf{P}$ for the projective space associated to the cone of strictly
positive functions:
\begin{equs}
\mathbf{P} = \left\{ \varphi \in C ( \TT^{d}; (0, \infty)) \ \text{ such that }
  \ \int_{\TT^{d}} \varphi(x) \ud x = 1 \right\} \;.
\end{equs}
For any set $ \mX $ and two functions $ \varphi, \psi \colon \mX \to \RR $ we
write $ \varphi \lesssim \psi $ if there exists a constant $ C > 0 $ such that
$ \varphi (x) \leqslant C \psi (x) $ for all $ x \in \mX $. We additionally use
the symbol $ \lesssim_{\vt} $ to highlight the fact that the constant $ C $
depends on some parameter $ \vt $, if this is the case. Throughout the article, we will work with
the right-continuous filtration $
(\mF_{t})_{t \in \RR } $, generated by (a two-sided version of)
the noise driving the solution $ (u_{t})_{t \geqslant 0} $ to an SPDE. We write
\begin{equation*}
\begin{aligned}
\EE_{\tau} [X] = \EE [ X \vert \mF_{\tau}] \;,
\end{aligned}
\end{equation*}
for the conditional expectation induced by any stopping time $ \tau $, where
$ X \in L^{1}(\PP) $ is an arbitrary random variable.

\section{Setting and main result} \label{sec:main}

We start by introducing the precise assumptions under which we study
Equation~\eqref{e:main}. Since we will consider only positive solutions to the
equation, it suffices for the nonlinearities to be defined on $ [0, \infty) $.
In particular, let us write $ C^{1}([0, \infty); \RR) $ for the space of
functions $ \varphi \colon [0, \infty) \to \RR $ that are differentiable on $ [0, \infty)
$ (considering at zero the right derivative), with continuous derivative.
\begin{assumption}\label{assu:nonlinearities} ~ 
\begin{enumerate} 
    \item \textbf{(Nonlinearity)} Consider $ f, \sigma \in C^{1}([0, \infty) ;
\RR)$ such that
$f(0)= \sigma(0) =0$ and  $\sigma^{\prime} (0) =1$.
    \item \textbf{(Noise)} Let $ W$ be a two-sided Gaussian random field with correlation
    \begin{equs} 
    \EE [\ud W(t,x) \ud W(s, y)] = \delta(t-s) \kappa (x, y)\;, \qquad \forall t, s \in
\RR \;, x,y \in \TT^{d} \;, 
    \end{equs}
for a spatial correlation kernel $ \kappa \in C^1 (\TT^d \times \TT^d) $.
\end{enumerate}
\end{assumption}
The requirement $ \sigma^{\prime} (0) =1 $ is just a matter of convenience: we
can always absorb a multiplicative factor into the correlation kernel $ \kappa
$. Furthermore, the assumption $ \kappa \in \mC^{1} $ is not necessary,
and could in principle be replaced by $ \kappa \in \mC^{\ve} $ for any $ \ve > 0 $, see 
also Remark~\ref{rem:regkappa}. 

Under Assumption~\ref{assu:nonlinearities}, Equation \eqref{e:main} admits
local mild solutions for any positive initial condition $ u_{0} \geqslant 0,
u_{0} \in L^{\infty} $.
By local mild solution we mean that there exists a potential blow-up time $
\sb(u_{0}) \in (0, \infty] $ and a process $ u \in C([0,
\sb(u_{0})) ; L^{\infty}(\TT^{d} ; [0, \infty))) $ such that 
\begin{equs}
u_{t} = P_{t} u_{0} + \int_{0}^{t} P_{t-s} [f (u_{s})] \ud s +
\int_{0}^{t} P_{t -s}[ \sigma (u_{s}) ]\ud W_{s} \;, \qquad \forall t <
\sb(u_{0}) \;.
\end{equs}
Here $(P_t)_{t\geqslant 0}$ is the heat semigroup and the integral against $ \ud W $
should be understood in the sense of Walsh \cite{Walsh1986}.
\begin{remark}\label{rem:prop-solution}
In particular, we observe the following:
\begin{itemize}
\item For any $ u_{0} \geqslant 0 $ the solution $ u_{t} \geqslant 0 $ stays
positive for all $ t < \tau_{\mathrm{fin}} $. Therefore, the nonlinearities $
f, \sigma $ need not be defined for $ u < 0 $.
\item The blow-up time $ \sb $ is necessary, since we do not impose any growth
conditions on $ f, \sigma $. In particular we fix $ \sb $ to be maximal,
meaning that
\begin{equs}
\| u_{t} \|_{\infty} \to \infty \;, \text{ as } t \uparrow \sb \;.
\end{equs}
\item In view of this, we define for later convenience
\begin{equs}
u_{t} = \infty \;, \qquad \forall t \geqslant \sb \;,
\end{equs}
and we will use the usual convention $ 1/ \infty = 0$.
\end{itemize}
\end{remark}
Since we are interested in the behaviour of \eqref{e:main} near the fixed point
$u=0$, a particular role in our analysis is played by the linearised equation
\begin{equs}[e:linearised-n]
	\ud v = \Delta v \ud t + \gamma \, v \ud t + v \ud W_{t} \;, \qquad
v(0, \cdot) = u_{0}(\cdot) \geqslant 0\;, \qquad \gamma = f^{\prime} (0) \;.
\end{equs}
Recall here that by Assumption~\ref{assu:nonlinearities}, $ \sigma^{\prime}
(0) =1 $. Later it will be convenient to work with the flow $ \Phi$ on $
C(\TT^{d}; [0, \infty)) $ associated
to \eqref{e:linearised-n}:
\begin{equation} \label{e:Phi}
\begin{aligned}
\ud \Phi_{s, t} [w] & = \Delta\Phi_{s, t} [w]  \ud t +
\gamma \Phi_{s, t} [w] \ud t +  \Phi_{s, t} [w] \ud W_{t} \;,\\
\Phi_{s, s}[w] & = w \geqslant 0  \;,
\end{aligned}
\end{equation}
for $0 \leqslant  s \leqslant t < \infty $, so that then $ v_{t} = \Phi_{0,t}
[v_{0}] $.
This linearised equation admits an almost-sure Lyapunov exponent, which will
determine the dynamical properties that we will study.
\begin{lemma}\label{lem:lyap}
Under Assumption~\ref{assu:nonlinearities}, there exists a $\lambda \in \RR$
and a set $ \widetilde{\Omega} \subseteq \Omega $ of full $ \PP $-measure such that for
every initial condition $v_0 \in C(\TT^{d};
[0, \infty)) $, with $ v_{0} \neq 0 $, the solution $ v $ to \eqref{e:linearised-n} with initial
condition $v_0$ satisfies
\begin{equs}
\lambda = \lim_{t \to \infty} \frac{1}{t}
\log(\|v_t (\omega)\|_{L^1}) \;, \qquad \forall \omega \in \widetilde{\Omega} \;.
\end{equs}
\end{lemma}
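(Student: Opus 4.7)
The plan is to identify $\lambda$ as a time average of an explicit functional of the projective process $\pi_t = v_t / \|v_t\|_{L^{1}}$ against its invariant measure (whose existence is deferred to Section~\ref{sec:invariant}), and then to promote the almost-sure limit from one reference initial datum to all admissible ones through a sandwich argument based on order preservation. I would first derive a Furstenberg--Khasminskii decomposition of $\log \|v_t\|_{L^{1}}$ via It\^o's formula. Integrating \eqref{e:linearised-n} over the torus and using $\int_{\TT^{d}} \Delta v \ud x = 0$ gives $\ud \|v_t\|_{L^{1}} = \gamma \|v_t\|_{L^{1}} \ud t + \int_{\TT^d} v_t(x) \ud W_t(x) \ud x$. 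Writing $Q(\pi) = \int_{\TT^d \times \TT^d} \pi(x)\pi(y)\kappa(x,y) \ud x \ud y$, an application of It\^o to $\log \|v_t\|_{L^1}$ then yields
\begin{equs}
\log \|v_t\|_{L^1} - \log \|v_0\|_{L^1} = \gamma t - \tfrac{1}{2}\int_0^t Q(\pi_s) \ud s + M_t\;,
\end{equs}
where $M_t$ is a continuous martingale with bracket $\langle M\rangle_t = \int_0^t Q(\pi_s) \ud s \leqslant \|\kappa\|_{\infty} t$, so that $M_t / t \to 0$ almost surely by the strong law of large numbers for continuous martingales.

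Next I would invoke the ergodic theory of the projective process. By the spectral gap estimates for the Markov semigroup of $(\pi_t)_{t \geqslant 0}$ on $\mathbf{P}$ that are the subject of Section~\ref{sec:invariant}, this process admits a unique invariant probability measure $\mu$ with exponential contraction to equilibrium in Hilbert's metric, uniformly in the starting point. A standard argument combining the Markov property with such uniform exponential mixing produces, for any bounded continuous $F \colon \mathbf{P} \to \RR$, a strong law $t^{-1} \int_{0}^{t} F(\pi_s) \ud s \to \int_{\mathbf{P}} F \ud \mu$ that holds almost surely on a full-measure event independent of the starting point $\pi_0$. Applied to $F = Q$ and combined with $M_t/t \to 0$, this yields $t^{-1}\log \|v_t\|_{L^1} \to \lambda$ almost surely for every strictly positive $v_0$, with
\begin{equs}
\lambda = \gamma - \tfrac{1}{2}\int_{\mathbf{P}} Q \ud \mu \;.
\end{equs}

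Finally I would extend the conclusion to all nonzero $v_0 \geqslant 0$ on a single full-measure event $\widetilde{\Omega}$. If $v_0, w_0$ are both strictly positive and continuous on the compact torus $\TT^d$, there exist $c, C > 0$ with $c w_0 \leqslant v_0 \leqslant C w_0$; by linearity and order preservation the same inequalities persist for $\Phi_{0,t}[v_0]$ and $\Phi_{0,t}[w_0]$, so $\|v_t\|_{L^1}$ and $\|w_t\|_{L^1}$ differ only by multiplicative constants and thus have identical logarithmic growth rates on the same event. For a general nonzero $v_0 \geqslant 0$, the quantitative lower bounds on the fundamental solution established in Section~\ref{sec:lbds} ensure that, almost surely, a finite random time $t_0(\omega) > 0$ exists after which $v_{t_0}$ is sandwiched between strictly positive multiples of $\mathbf{1}$; the cocycle identity $v_t = \Phi_{t_0,t}[v_{t_0}]$ together with the previous step applied to the noise shifted by $t_0$ then produces the same limit $\lambda$. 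The main obstacle is the ergodic input: one needs an almost-sure ergodic theorem for the infinite-dimensional Markov process $\pi_t$ that is valid from every starting point on a common full-measure event, and this crucially relies on the order-preserving structure of the equation, which underpins the Hilbert-metric contraction for $\pi_t$. The present lemma should therefore be viewed as a direct consequence of the spectral gap analysis of Section~\ref{sec:invariant}.
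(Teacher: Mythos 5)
Your proposal is correct in substance and follows the same underlying mechanism as the paper: the Furstenberg--Khasminskii decomposition of $\log\|v_t\|_{L^1}$ via It\^o, vanishing of $M_t/t$, and the ergodic theory of the projective process, followed by a sandwich argument for general nonnegative data via strict positivity of the flow. However, the paper's proof is deliberately short: it invokes the multiplicative ergodic theorem for \emph{existence} of the limit for each fixed $v_0$, and then appeals to Lemma~\ref{lem:FK-formula} and the flow property (Section~\ref{sec:lbds}) to conclude that the limit and the null set are universal. You instead re-derive the content of Lemma~\ref{lem:FK-formula} inline and bypass the MET entirely; this is a legitimate alternative, and in fact is essentially what the paper's proof of Lemma~\ref{lem:FK-formula} does.

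Two small points where your account should be tightened. First, the assertion that ``a standard argument combining the Markov property with uniform exponential mixing'' gives a strong law $t^{-1}\int_0^t F(\pi_s)\,\mathrm{d}s \to \mu(F)$ on a full-measure event \emph{independent of} $\pi_0$ is glossing over the key mechanism: a spectral gap for the Markov semigroup by itself does not produce a common null set across starting points. What is actually used (and is the crux of the paper's Lemma~\ref{lem:FK-formula} and Corollary~\ref{cor:syncrho}) is the \emph{pathwise} exponential synchronisation of $\pi_t$ to the stationary solution $\pi_t^\infty$ in Hilbert's metric, driven by Birkhoff's contraction principle, which is genuinely a stronger, almost-sure statement whose exceptional set does not depend on $\pi_0$; one then applies the ergodic theorem to the stationary process $\pi^\infty$ only. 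Second, in the final extension to nonzero $v_0 \geqslant 0$, you do not need a random time $t_0(\omega)$: taking $t_0 = 1$ deterministically works, since the kernel $K_{0,1}(x,y)$ is almost surely strictly positive and continuous (Section~\ref{sec:lbds}), so that $v_1$ is automatically sandwiched between $\bigl(\min_{x,y} K_{0,1}(x,y)\bigr)\|v_0\|_{L^1}$ and $\bigl(\max_{x,y} K_{0,1}(x,y)\bigr)\|v_0\|_{L^1}$, pathwise.
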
 
\begin{proof}

The existence of the Lyapunov exponent for any fixed initial condition is a
consequence of the multiplicative ergodic theorem. The only noteworthy aspect
is that the Lyapunov exponent and the null-set outside of which convergence
holds do not depend on the initial condition. This is a consequence of a
formula for the Lyapunov exponent, see Lemma~\ref{lem:FK-formula}, and of the
existence of a flow of solutions to \eqref{e:linearised-n}, see for
example Section~\ref{sec:lbds}.
\end{proof}
In this setting the main result of this article is the following theorem. It
provides a quantitative estimate (in terms of negative moments of the solution)
of the instability of \eqref{e:main} at $ u \equiv 0 $ when the
Lyapunov exponent is positive. Recall that we allow our solution to blow up in
finite time, and we define $ u_{t} = \infty $ for $ t \geqslant
\tau_{\mathrm{fin}} $, in which case $ (\min_{x} u(t, x))^{- \eta} = 0$ if $
t \geqslant \sb $.

\begin{theorem}\label{thm:moment-estimate}
   Under Assumption~\ref{assu:nonlinearities}, if the Lyapunov exponent
$\lambda$ from Lemma~\ref{lem:lyap} satisfies $\lambda >0$, then there exist
finite constants $\eta, \zeta, C_{1}, C_{2} > 0$ (depending on $ \lambda $)
such that for any $ u_{0} \in C(\TT^{d} ; [0, \infty)) $
and all $ t \geqslant 0 $:
\begin{equs}
 \EE \Big[ \Big( \min_{x \in \TT^{d}} u(t,x)
\Big)^{- \eta} \Big] < C_{1} e^{- \zeta t}\Big( \min_{x \in \TT^{d}} u(0,x)
\Big)^{- \eta}   + C_{2} \;.
\end{equs}
\end{theorem}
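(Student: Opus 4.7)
The plan is to deduce Theorem~\ref{thm:moment-estimate} from its linear analogue, Theorem~\ref{thm:linearised-1}, which (through the corrector construction of Section~\ref{sec:lyap-func}) establishes the analogous contractive estimate for the linearised flow $\Phi_{0,t}$ of \eqref{e:linearised-n}. The bridge is that since $f(0)=\sigma(0)=0$, $f'(0)=\gamma$ and $\sigma'(0)=1$, we may write $f(u)=\gamma u + r_{1}(u)\,u$, $\sigma(u)= u + r_{2}(u)\,u$ with $r_{i}(u)\to 0$ as $u\downarrow 0$. On a time window where $u$ remains in a small neighbourhood of zero, the nonlinear equation is a perturbation of its linearisation by coefficients of size comparable to $\|u\|_{\infty}$, so the same Lyapunov functional $V(u)=(\min_{x} u(x))^{-\eta}$ should still contract in expectation up to an error that can be absorbed by choosing the neighbourhood small enough.

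To make this precise I would introduce, for a small $\delta>0$ to be tuned below, the stopping time
\begin{equs}
\tau_{\delta} \;=\; \inf\big\{ t \geqslant 0 \,:\, \|u_{t}\|_{\infty} \geqslant \delta\big\} \wedge \sb \;.
\end{equs}
On $[0,\tau_{\delta})$ the solution satisfies an SPDE whose coefficients differ from those of \eqref{e:linearised-n} by a perturbation of size $O(\delta)$, and after the solution-dependent exponential transformation announced in the introduction one obtains a PDE with random but controlled coefficients. Applying It\^o's formula to the corrector $H$ from Section~\ref{sec:lyap-func-constr} evaluated at the projection $\pi_{t}=u_{t}/\|u_{t}\|_{L^{1}}$, and combining with the Fr\'echet derivative bound of Lemma~\ref{lem:frechet}, the negative drift $-\zeta\,V(u)$ provided by Theorem~\ref{thm:linearised-1} is preserved modulo an error bounded by $C\delta\,V(u)$. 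Choosing $\delta$ small enough that $C\delta\leqslant \zeta/2$ then yields exponential contraction of $\EE[V(u_{t\wedge\tau_{\delta}})]$ by a standard Gr\"onwall step.

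On the complementary event $\{t\geqslant \tau_{\delta}\}$ the solution is no longer small, and one cannot expect the linearisation to remain accurate. Here I would invoke the quantitative lower bounds for fundamental solutions to SPDEs developed in Section~\ref{sec:lbds}, based on~\cite{PerkowskiVanZuijlen23HeatKernel}. These imply that once $\|u_{t}\|_{\infty}\geqslant \delta$, after a deterministic waiting time of order one the minimum of $u$ is bounded below by a random positive variable with uniformly integrable negative moments of sufficiently small order $\eta$. This controls $\EE[V(u_{t})\,\mathbf{1}_{\{t\geqslant \tau_{\delta}\}}]$ by a constant, producing the $C_{2}$ term. Gluing the two regimes by a strong Markov argument at $\tau_{\delta}$ and iterating yields the claimed bound.

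The main obstacle will lie in the linearisation step, and specifically in producing a $V$-based comparison that remains sharp when $u_{0}$ is irregular: $V(u_{0})$ may be finite while $u_{0}$ has sharp minima or rough oscillations, and standard parabolic regularisation cannot be applied at $t=0$. The error term in It\^o's formula for $V(u_{t})$ contains Fr\'echet derivatives of the corrector that must be integrated against a singularity in time at the initial moment, and it is precisely the Hilbert-metric bound on the corrector's Fr\'echet derivative together with the parametrix-type heat kernel estimates of Section~\ref{sec:lbds} that allow one to absorb this singularity and close the estimate uniformly in $u_{0}$.
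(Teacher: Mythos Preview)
Your proposal has two genuine gaps.

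\textbf{The It\^o-formula step does not close.} You want to apply It\^o's formula to the corrector $G$ evaluated along the \emph{nonlinear} projective process $\pi_t = u_t/\|u_t\|_{L^1}$. But $G$ solves $\mathcal{L}G=F$ for the generator $\mathcal{L}$ of the \emph{linear} projective dynamics, so along the nonlinear process you get $\mathrm{d}G(\pi_t) = \mathcal{L}_{\mathrm{nl}}G(\pi_t)\,\mathrm{d}t + \mathrm{d}M_t$ with $\mathcal{L}_{\mathrm{nl}}G \neq F$. When $\sigma$ is genuinely nonlinear the perturbation $r_2(u)u$ enters the diffusion coefficient, so the error $(\mathcal{L}_{\mathrm{nl}}-\mathcal{L})G$ contains a \emph{second} Fr\'echet derivative of $G$ (from the changed quadratic variation). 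Only first-derivative control is available (Lemma~\ref{lem:frechet}), and there is no reason to expect a uniform bound on $D^2G$. The paper avoids this entirely: it never applies It\^o's formula to the nonlinear process. Instead it builds a comparison process $w_t\leqslant u_t$ that literally follows the \emph{linear} flow $\Phi$ (or $\Xi$) between stopping times, and applies the already-proven linear estimate (Proposition~\ref{prop:lin}) to $w_t$. The comparison $w_t\leqslant u_t$ is obtained, in the fully nonlinear case, by the exponential transformation $\Psi=e^X\overline\Psi$, $\Phi=e^Y\overline\Phi$ and a parametrix comparison of the \emph{fundamental solutions} $\Gamma^\Psi$ and $\Gamma^\Phi$ (Lemma~\ref{lem:comp-phi-psi}), which only needs $\|X-Y\|_\infty$ and $\|\nabla(X-Y)\|_\infty$ small, not any regularity of $G$.

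\textbf{The large regime is handled incorrectly.} Your argument on $\{t\geqslant\tau_\delta\}$ assumes that once $\|u_t\|_\infty\geqslant\delta$ one can bound $\min_x u_t$ from below after a short waiting time. But $\|u\|_\infty\geqslant\delta$ says nothing about $\min_x u$: the solution could be of size $\delta$ at one point and arbitrarily close to zero elsewhere, and this configuration can persist. The heat-kernel lower bounds of Section~\ref{sec:lbds} give $\min_x u_t \gtrsim c_K\cdot\min_x u_0$, not $\min_x u_t \gtrsim c_K\cdot\|u_0\|_\infty$. The paper does not try to bound $\min u$ when $u$ is large; instead it applies a cut-off $\mathcal{T}u = u\wedge\varepsilon$ at each stopping time and restarts from the truncated profile. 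The constant $C_2$ then arises simply as $\varepsilon^{-\eta}$ (see \eqref{e:prf-prop-1} and \eqref{e:jumpnl}), and the sequence of stopping times together with Lemma~\ref{lem:discretisations} converts the one-step discrete contraction into the continuous-time bound with exponential rate.
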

The starting point for the proof of Theorem~\ref{thm:moment-estimate} is to
prove the theorem in the linear case $ f(u)= \gamma u, \sigma(u) = u $ and then
introduce a linearisation argument to treat the nonlinearities. Accordingly, we
start with the analysis of the linear case, where the construction of the
Lyapunov functional requires the introduction of a suitable corrector.
The proof of Theorem~\ref{thm:moment-estimate} can then be found in Section~\ref{sec:partilly-linear} in the
simpler ``partially linear'' case $ \sigma (u) = u $. This case involves less technicalities and should
serve as a warm-up for the first reader. Instead, for the fully nonlinear case
we refer to Section~\ref{sec:fully-nonlinear}.

\section{The Lyapunov functional for the linearised problem}\label{sec:lyap-func}

This section is devoted to understanding Theorem~\ref{thm:moment-estimate} in the
linear case $ \sigma (u) = u , f(u) = \gamma u $. The aim of the
following section will be to prove the upcoming theorem. Note that unlike
Theorem~\ref{thm:moment-estimate}, we only consider the $ L^{1} $ norm and not
the minimum of the function. Improving the estimate to obtain that negative
polynomials of the minimum are also a Lyapunov functional is the content of 
Proposition~\ref{prop:lin}, and the consequence of parabolic regularity
estimates. The proof of the next result requires the use of a suitable
stochastic homogenisation argument.

\begin{theorem}\label{thm:linearised-1}
Under the assumption of Theorem~\ref{thm:moment-estimate}, there exist $ C
, \eta_{0}, \zeta > 0 $ such that for any stopping
time $ \tau < \infty  $ and $ t > 0 $ and $ \mF_{\tau} $-adapted $ w \in
C(\TT^{d}) $ 
\begin{equation*}
\begin{aligned}
\EE_{\tau} \left[ \left(  \int_{\TT^{d}} \Phi_{\tau, \tau +
t}[w] (x) \ud x
\right)^{- \eta } \right]^{\frac{1}{\eta}} \leqslant C e^{- \zeta t }
\int_{\TT^{d}} w (x) \ud x  \;,
\end{aligned}
\end{equation*}
for all $ \eta \in (0, \eta_{0}) $, and where $ \Phi $ is the flow in
\eqref{e:Phi}.
\end{theorem}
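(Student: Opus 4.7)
The plan is to exploit the linearity of the flow $\Phi$ to reduce the problem to one on the projective space $\mathbf{P}$ and then establish a Furstenberg--Khasminskii representation for $\log \|\Phi_{\tau,\tau+t}[w]\|_{L^1}$ that can be inverted via a corrector coming from a Poisson equation on $\mathbf{P}$. More precisely, since the equation is linear, $\Phi_{\tau,\tau+t}[w] = \int w\, dx \cdot \Phi_{\tau,\tau+t}[\pi]$ where $\pi = w/\int w \,dx \in \mathbf{P}$ (after dealing with the degenerate case $w \equiv 0$ separately). By the strong Markov property of the flow for the noise, conditioning on $\mF_\tau$ reduces the problem to a uniform-in-$w \in \mathbf{P}$ bound on $\EE[\|\Phi_{0,t}[\pi]\|_{L^1}^{-\eta}]^{1/\eta}$, which we aim to show is bounded by $C e^{-\zeta t}$.

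Next I would study the projective process $\pi_t = v_t / \|v_t\|_{L^1}$ associated to \eqref{e:linearised-n}. A direct application of It\^o's formula yields a decomposition of the form
\begin{equs}
\log \|v_t\|_{L^1} = \log \|v_0\|_{L^1} + \int_0^t Q(\pi_s) \, ds + M_t,
\end{equs}
where $M_t$ is a martingale with quadratic variation controlled in terms of $\kappa$ and $Q$ is an explicit observable on $\mathbf{P}$. Combining this with the Furstenberg--Khasminskii formula $\lambda = \int Q \, d\mu$ (for $\mu$ the invariant measure of $\pi_t$ on $\mathbf{P}$, whose existence and ergodic/mixing properties are to be invoked from Section~\ref{sec:invariant}), the task becomes to control the fluctuation $\int_0^t (Q(\pi_s) - \lambda)\, ds$ uniformly in the starting point $\pi_0$.

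This is the place for the corrector $\Psi \colon \mathbf{P} \to \RR$, a solution of the Poisson problem $\mL \Psi = \lambda - Q$ where $\mL$ is the generator of $\pi_t$. Granted appropriate spectral-gap estimates on $\mathbf{P}$, $\Psi$ is formally given by $\Psi(\pi) = \int_0^\infty (\EE_\pi Q(\pi_s) - \lambda)\, ds$; the central technical input will be the uniform bound on $\Psi$ and on its Fr\'echet derivative (so that It\^o's formula can be applied rigorously), which the paper announces via Hilbert's projective metric (Lemma~\ref{lem:frechet}). Assuming these estimates, It\^o's formula gives
\begin{equs}
\int_0^t (Q(\pi_s) - \lambda)\, ds = \Psi(\pi_0) - \Psi(\pi_t) + \widetilde M_t,
\end{equs}
with $\widetilde M_t$ a martingale whose quadratic variation is controlled uniformly in $\pi_0$ by the bound on $D\Psi$. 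Combined with the previous decomposition, we obtain
\begin{equs}
-\eta \log \|v_t\|_{L^1} = -\eta \log \|v_0\|_{L^1} - \eta \lambda t + \eta (\Psi(\pi_t) - \Psi(\pi_0)) - \eta (M_t + \widetilde M_t).
\end{equs}

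The conclusion will follow by exponentiating and taking expectations: the $\Psi$-terms yield a bounded multiplicative factor (uniformly in $\pi_0$), and for $\eta$ small enough the exponential martingale $\exp(-\eta(M_t+\widetilde M_t))$ has expectation bounded by $\exp(C \eta^2 t)$ thanks to the uniform bound on the quadratic variation. Choosing $\eta_0$ small so that $\eta \lambda - C \eta^2 > 0$ gives $\zeta = \lambda - C\eta_0 > 0$ and produces the desired exponential decay. The conditional version at stopping time $\tau$ follows by applying the strong Markov property and using that $w$ is $\mF_\tau$-measurable. The main obstacle I expect is the bound on the Fr\'echet derivative of $\Psi$: controlling how the long-time average of $Q(\pi_s)$ depends on perturbations of the initial datum $\pi_0 \in \mathbf{P}$ requires quantitative contraction in Hilbert's metric for the projective dynamics, and the translation of such a contraction into a Fr\'echet-type bound that is compatible with It\^o's formula on the infinite-dimensional space $\mathbf{P}$ is where the genuine work of Section~\ref{sec:lyap-func-constr} lies.
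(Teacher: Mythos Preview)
Your proposal is correct and follows essentially the same route as the paper: reduction to $\tau=0$ via the strong Markov property, the It\^o decomposition of $\log r_t$ (equation \eqref{e:ito-log}), the corrector $G$ solving the Poisson equation \eqref{e:G}, and the crucial Fr\'echet bound of Lemma~\ref{lem:frechet} to control the martingale terms. The only cosmetic difference is packaging: the paper builds the multiplicative Lyapunov functional $V(r)=r^{-\eta}(1+\eta G(\pi))$ and shows it is a supermartingale after exponential compensation, whereas you work additively with $\log r_t$ and exponentiate at the end---these are equivalent (the paper's $(1+\eta G)$ is the first-order expansion of your $e^{\eta\Psi}$), and both rely on exactly the same corrector estimates from Section~\ref{sec:invariant}.
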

The proof of this result is provided at the end of this section.
It relies on the construction of a Lyapunov functional of
the form
$$ V(w) \sim \left( \int_{\TT^{d}}  w (x) \ud x\right)^{- \eta} \;, $$ which we
obtain in Section~\ref{sec:lyap-func-constr} below. Here the symbol $ \sim $ indicates that $ V $ contains a
(bounded) corrector that is constructed implicitly through the solution of a Poisson
problem involving the generator of the projective dynamic of $ \Phi $.

\subsection{The radial Lyapunov functional} \label{sec:lyap-func-constr}

Since the proof of
Theorem~\ref{thm:linearised-1} for arbitrary $ \tau $ follows from the case $
\tau = 0 $ by the strong Markov property, we shall fix the latter setting for
the remainder of this section and write $ \Phi_{t}[w] = \Phi_{0, t} [w] $ (and similarly for all
other processes). For the present discussion it will be convenient to split the solution $ \Phi$  to
\eqref{e:Phi} into its radial and angular component.
\begin{equs}
r_{t}[w] = \int_{\TT^{d}} \Phi_{t}[w]( x)  \ud x\;, \qquad \pi_t[w](x) =
\Phi_{t}[w](x) / r_{t}[w] \;.
\end{equs}
When clear from context, we will drop the dependence on the initial
condition $ w $.
Then, let us define the first order approximation of our Lyapunov functional:
\[ V^{(0)} (r) = r^{-\eta} . \]
We would like to prove that a suitably corrected version of $V^{(0)}$ is a
Lyapunov functional for the solution $ \Phi $ to \eqref{e:Phi}. By It\^o's
formula, since $t \mapsto r_{t}$ is a semimartingale, we find:
\[ \ud \log (r_t) = \frac{\ud r_t}{r_t} - \frac{\ud \langle r
   \rangle_t}{2 r_t^2} . \]
Furthermore, $r_t$ has quadratic variation
\[ \langle r \rangle_t =  \int_0^t \int_{\TT^d \times \TT^d} \Phi_t
   (x) \Phi_t (y) \kappa (x, y) \ud x \ud y \;,\]
where $ \kappa (x, y) $ is the spatial correlation of the noise, as defined in
Assumption~\ref{assu:nonlinearities}. Hence, overall we obtain that for some continuous martingale
$t \mapsto M_{t}$:
\begin{equs} \label{e:ito-log} 
\ud \log(r_t) = \gamma \ud  t - \frac{1}{2} \left( \int_{\TT^d \times
   \TT^d} \pi_t (x) \pi_t (y) \kappa (x, y) \ud x \ud y \right)
   \ud t + \ud M_t 
   \;.
\end{equs}
Now, by Corollary~\ref{cor:syncrho}, the
projective process $( \pi_t)_{t \geqslant 0}$ converges almost surely exponentially fast
(in the projective space with Hilbert's metric, as $ t \to \infty $) to an
invariant solution $ \pi^{\infty}_{t}$. From the ergodic theorem we then obtain the following result.

\begin{lemma}\label{lem:FK-formula}
Under the assumptions of Theorem~\ref{thm:moment-estimate} we find that
\begin{equation}\label{e:fk} \lambda  = \gamma -  \frac{1}{2} \mathbb{E} \left[ \left( \int_{\TT^d
   \times \TT^d} \pi_{\infty} (x) \pi_{\infty} (y) \kappa (x, y) \ud
   x \ud y \right) \right]  \;. 
   \end{equation}
\end{lemma}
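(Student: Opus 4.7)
The plan is to divide the It\^o formula \eqref{e:ito-log} by $t$ and pass to the limit $t \to \infty$ almost surely, identifying each term. By Lemma~\ref{lem:lyap}, the left-hand side $t^{-1} \log(r_t)$ converges to the deterministic constant $\lambda$ almost surely for any fixed initial condition $w$, so it remains to show that the right-hand side converges to $\gamma - \tfrac{1}{2} \EE[g(\pi_\infty)]$, where $g(\pi) = \int_{\TT^d \times \TT^d} \pi(x) \pi(y) \kappa(x,y) \ud x \ud y$.

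First I would control the martingale. Reading $M$ off the It\^o decomposition of $\log r_t$, its quadratic variation equals
\[
\langle M \rangle_t = \int_0^t \int_{\TT^d \times \TT^d} \pi_s(x) \pi_s(y) \kappa(x, y) \ud x \ud y \ud s .
\]
Since $\pi_s \in \mathbf{P}$ is non-negative with unit $L^1$-mass and $\kappa$ is bounded, $\langle M \rangle_t \leqslant \|\kappa\|_\infty\, t$. Hence the strong law of large numbers for continuous martingales yields $M_t / t \to 0$ almost surely.

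The main step is the Birkhoff-type identification of the time-averaged integral term. By Corollary~\ref{cor:syncrho}, for any deterministic initial condition $w$ the projective process $\pi_s[w]$ synchronises exponentially fast in Hilbert's metric with the unique invariant stationary solution $\pi_\infty^s$. Since $\kappa$ is bounded and Hilbert's metric controls the uniform norm on $\mathbf{P}$, the functional $g$ is Lipschitz along synchronising paths, so $|g(\pi_s[w]) - g(\pi_\infty^s)|$ decays exponentially almost surely, and therefore
\[
\frac{1}{t} \int_0^t g(\pi_s[w]) \ud s - \frac{1}{t} \int_0^t g(\pi_\infty^s) \ud s \to 0 \quad \text{a.s.}
\]
Uniqueness of the invariant measure on the projective space implies ergodicity, so the classical Birkhoff theorem applied to the stationary process $\pi_\infty$ gives $t^{-1} \int_0^t g(\pi_\infty^s) \ud s \to \EE[g(\pi_\infty)]$ almost surely. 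Combining the three steps and equating the two deterministic limits yields \eqref{e:fk}.

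The main potential obstacle is the synchronisation step: one has to invoke Corollary~\ref{cor:syncrho} in a topology strong enough to guarantee that $g(\pi_s[w]) - g(\pi_\infty^s) \to 0$ almost surely at a rate whose Ces\`aro average vanishes. Because $\kappa \in C^1$ and Hilbert's metric dominates the uniform distance on $\mathbf{P}$, this is essentially a soft consequence of exponential contraction, but it is the step that interfaces the bulk of the paper's ergodic analysis of the projective process with the formula \eqref{e:fk}.
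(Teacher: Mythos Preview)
Your proposal follows exactly the paper's approach: the same martingale bound via $\langle M\rangle_t \leqslant \|\kappa\|_\infty\, t$, the same synchronisation to the stationary solution $\pi^\infty$ via Corollary~\ref{cor:syncrho}, and the same appeal to the ergodic theorem for the stationary process. One correction: Hilbert's metric does \emph{not} dominate the uniform norm on $\mathbf{P}$ (densities in $\mathbf{P}$ can be unbounded), so your Lipschitz claim for $g$ should instead go through the $L^1$ bound $\|\pi-\nu\|_{L^1}\leqslant e^{d_{\mathbf{P}}(\pi,\nu)}-1$ of Lemma~\ref{lem:bush}, exactly as the paper does --- this suffices because $g$ is $L^1$-Lipschitz when $\kappa\in L^\infty$.
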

\begin{proof}
This follows from \eqref{e:ito-log}, if we prove that
\begin{equs}[e:et]
\lim_{t \to \infty} \frac{1}{t} \int_{0}^{t}  \int_{\TT^d \times
   \TT^d} \pi_s (x) \pi_s (y) \kappa (x, y) \ud x \ud y \ud s = \EE \left[ \left( \int_{\TT^d
   \times \TT^d} \pi_{\infty} (x) \pi_{\infty} (y) \kappa (x, y) \ud
   x \ud y \right) \right].
\end{equs}
Indeed, we have that almost surely the martingale $t\mapsto M_t$ from \eqref{e:ito-log} satisfies
$\lim_{t \to \infty} t^{-1} M_{t} = 0$, since we can control its quadratic
variation. In fact, note that we have the identity
\begin{equs}
M_t = \int_{0}^{t} \int_{\TT^d} \pi_s(x)  W( \ud s, \ud  x) \;, 
\end{equs}
which has quadratic variation bounded as follows (since $ \smallint_{\TT^d} \pi
(x) \ud x =1 $):
\begin{equs}
\langle M \rangle_{t} = \int_{0}^{t} \int_{(\TT^{d})^{2}} \pi_{s}(x)
\pi_{s} (y) \kappa (x, y) \ud x \ud y \ud s \leqslant \| \kappa \|_{\infty} t
\;,
\end{equs}
so that the desired result follows.
As for the proof of \eqref{e:et}, we use the ergodic theorem and the
synchronisation result in Corollary~\ref{cor:syncrho}. Consider $
(\pi_{t}^{\infty})_{t \geqslant 0} $ the invariant projective processes from
Corollary~\ref{cor:syncrho}, then we find
\begin{equs}
\frac{1}{t}\left\vert \int_{0}^{t} \int_{\TT^{d}} (\pi_{s}(x) \pi_{s}(y) -
\pi^{\infty}_{s}(x) \pi^{\infty}_{s}(y)) \kappa (x, y) \ud x \ud y \right\vert
& \leqslant \frac{ 2   \| \kappa \|_{\infty} }{t} \int_{0}^{t} \| \pi_{s} -
\pi_{s}^{\infty} \|_{L^{1}} \ud s \\
& \leqslant \frac{ 2   \| \kappa \|_{\infty} }{t} \int_{0}^{t} \exp \left(
d_{\mathbf{P}} (\pi_{s}^{\infty}, \pi_{s})\right) -1  \ud s  \;,
\end{equs}
where we have used the bound in Lemma~\ref{lem:bush} in the last line.
Therefore, since almost surely for all $ s $ sufficiently large, $
d_{\mathbf{P}} (\pi_{s}, \pi^{\infty}_{s}) \leqslant e^{- \alpha s/2} $ by the
second statement of Corollary~\ref{cor:syncrho} (where $ \alpha > 0 $ is a
deterministic constant), and since
\begin{equs}
\lim_{t \to \infty} \frac{1}{t} \int_{0}^{t} e^{- \alpha s /2}  \ud s = 0\;,
\end{equs}
it suffices to show that
\begin{equs}
\lim_{t \to \infty} \frac{1}{t}  \int_{0}^{t}  \int_{\TT^d \times
   \TT^d} \pi_s^{\infty} (x) \pi_s^{\infty} (y) \kappa (x, y) \ud x \ud y \ud s = \EE \left[ \left( \int_{\TT^d
   \times \TT^d} \pi_{\infty} (x) \pi_{\infty} (y) \kappa (x, y) \ud
   x \ud y \right) \right] \;.
\end{equs}
Since $ (\pi^{\infty}_{t})_{t \geqslant 0} $ is invariant, this now follows
from the ergodic theorem (note that $ \pi^{\infty}_{s} \overset{d}{=}
\pi_{\infty} $ for all $ s \geqslant 0 $). This completes the proof of the lemma.
\end{proof}
At this point we can proceed with constructing a Lyapunov functional through a
suitable correction of $ V^{(0)} $. To this aim, let us apply the It\^o formula
to $V^{(0)}$, so that we obtain
\begin{equs}
\ud V^{(0)} (r_t) & = - \eta V^{(0)}(r_t) \frac{\ud r_t}{r_t} + \frac{\eta
(\eta +1)}{2} V^{(0)} (r_t) \frac{\ud \langle r \rangle_t}{r_t^2} \\
& = - \eta V^{(0)}(r_t) \gamma \ud t + \frac{\eta (\eta +1)}{2} V^{(0)}  (r_t)
Q(\pi_t) \ud t  + \ud M^{(0)}_t\;,
\end{equs}
with
\begin{equs}
Q(\pi_t)= \int_{\TT^d
   \times \TT^d} \pi_t (x) \pi_t(y) \kappa (x, y) \ud x \ud y \;.
\end{equs}
We can rewrite the previous expression as follows:
\begin{equs}[e:ito_v0]
\ud V^{(0)}  (r_t) & =- \eta \lambda V^{(0)} (r_t)  \ud t - \frac{\eta}{2}
V^{(0)} (r_t) F (\pi_t) \ud t  + \frac{\eta^2}{2} V^{(0)} (r_t) Q(\pi_t) \ud t + \ud
M_t^{(0)} \;,
\end{equs}
where we have defined
\begin{equs}[e:F]
F(\pi) = \gamma - \lambda - \frac{1}{2} \int_{\TT^{d} \times \TT^d} \pi (x) \pi(y)\kappa
(x, y) \ud x \ud y\;.
\end{equs}
Since the ``error'' term $F$ is centered, in the sense that it has mean zero
with respect to the invariant measure of
$(\pi_{t})_{t \geqslant 0} $ we do not expect it to contribute over long times. 
To remove this term from our analysis, we can correct the functional $V^{(0)}$
as follows, by defining
\begin{equs}[e:G]
V(r) = V^{(0)} (r)(1+\eta G(\pi))\;, \qquad \mL G = F\;,
\end{equs}
where $\mL$ is the generator of the projective process $( \pi_t)_{t \geqslant
0}$, and we refer to Section~\ref{sec:invariant} below for the construction
and analysis of such $ G $. 

To see that we stand to gain something from this perturbation, we proceed by
applying the It\^o formula to $V$. We obtain
\begin{equs}[e:dV]
\ud V (r_t) =   (1 + \eta G(\pi_t)) \ud V^{(0)} (r_t)+ \eta
V^{(0)} (r_t) \ud G(\pi_t) + \frac{\eta}{2} \ud \langle V^{(0)} (r) , G(\pi) \rangle_t\;.
\end{equs}
Here we have presumed that $ G(\pi_{t}) $ is a semi-martingale, which is indeed
the case, as for a continuous martingale $ t \mapsto
M^{G}_{t} $ we find via Lemma~\ref{lem:semimart}
\begin{equs}[e:dG]
\ud G (\pi_t) = \mL G (\pi_t) \ud t + \ud M^G_t = F (\pi_t) \ud t + \ud M^G_t\;.
\end{equs}
In particular, we can deduce the identity
\begin{equs}
\ud \langle V^{(0)} (r) , G(\pi) \rangle_t = \ud \langle M^{(0)} , M^G \rangle_t\;.
\end{equs}
To compute this covariation we must first find a convenient representation for
the two martingales involved. While $ M^{(0)} $ is easy to compute, the
following expression for $ M^{G} $ can be derived for example from the SPDE
representation of the projective process $ (\pi_{t})_{t \geqslant 0} $ given in
\eqref{eqn:angular}, assuming that $ G $ is Fr\'echet differentiable (see Lemma~\ref{lem:semimart}):
\begin{equs}
\ud M^{(0)}_t &= - \eta V^{(0)} (r_t) \int_{\TT^d} \pi_t(x)  W( \ud t, \ud  x) \;, \\
\ud M^G_t & = \langle DG (\pi_t), \pi_t \ud W - \langle\pi_t \ud W ,1 \rangle \pi_t \rangle \;.\label{e:MG}
\end{equs}
Note that $G$ is a functional
$G \colon \mathbf{P} \rightarrow \mathbf{R}$,
and since $ \mathbf{P} $ is affine, at every point $\nu \in \mathbf{P}$, the
tangent space of $\mathbf{P}$ at $\nu$ is given by the same space
$$T_\nu = \left\{ \varphi \in C(\TT^{d}) \text{ such that } \int_{\TT^d} \varphi (x) \ud  x = 0 \right\}\;.$$
In particular, if $G$ is smooth, then its Fr\'echet derivative is defined in
any direction belonging to $T_\nu$, and therefore
\eqref{e:MG} is well-defined (once we integrate in time), since $ \pi_t \ud W - \langle\pi_t \ud W ,1
\rangle \pi_t  $ has zero space average.
As for the quadratic covariation, we now have that
\begin{equs}
    \ud \langle V^{(0)}  (r) , G(\pi) \rangle_t & = - \eta V^{(0)} (r_t) \ud [
\langle \ud W, \pi_t\rangle , \langle \ud W \pi_t, DG(\pi_{t}) - \langle \pi_t,
DG(\pi_{t}) \rangle \rangle ] \\
    & = - \eta V^{(0)}(r_t) \int_{\TT^d \times \TT^d } \pi_t (x) \kappa (x,y)
\pi_t(y) \left\{ DG (\pi_{t}) (y) - \langle \pi_t, DG (\pi_{t}) \rangle \right\} \ud x \ud y \\
    & = - \eta V^{(0)} (r_t) \langle DG (\pi_t), \pi_t \cdot \kappa * \pi_t - \langle \pi_t \cdot \kappa * \pi_t, 1 \rangle \pi_t \rangle\;,
\end{equs}
where $\kappa * \pi = \int_{\TT^d} \kappa (x, y) \pi (y) \ud y$. In particular,
we observe that the direction
\begin{equs}
\mathcal{N}(\pi) = \pi \cdot \kappa * \pi - \langle \pi \cdot \kappa * \pi, 1 \rangle \pi 
\end{equs}
lies in $T_{\pi}$.
Now, by Lemma~\ref{lem:frechet} below, we find that
\begin{equs}
|\langle D G(\pi), \mN (\pi) \rangle | & \leqslant 2 \| G
\|_{\mathrm{Lip}(d_{\mathbf{P}})}  \| \mN(\pi) / \pi
\|_{\infty} \label{e:usegrad} \\
& \leqslant 2 \| G
\|_{\mathrm{Lip}(d_{\mathbf{P}})} \| \kappa * \pi - \langle
\pi \cdot \kappa * \pi,1 \rangle  \|_{\infty} \\
& \leqslant 2  \| G
\|_{\mathrm{Lip} (d_{\mathbf{P}})} \| \kappa \|_{\infty}
(\| \pi \|_{L^{1}} + \| \pi \|_{L^{1}}^{2}) \\
& \leqslant 4\| G
\|_{\mathrm{Lip} (d_{\mathbf{P}})} \| \kappa \|_{\infty} \;.
\end{equs}
Note that in virtue of the bound of Lemma~\ref{lem:frechet} the upper bound
does not depend on $ \pi $. This is not trivial and is the consequence of a
cancellation in the term $ \mN (\pi) / \pi $, as well as of the assumption $ \kappa \in
L^{\infty} $.
Overall we have thus obtained
\begin{equation}\label{e:qv-bd}
\begin{aligned}
\ud \langle V^{(0)} (r) , G(\pi) \rangle_t \leqslant \eta V^{(0) } (r_{t}) 4\| G
\|_{\mathrm{Lip} (d_{\mathbf{P}}) } \| \kappa \|_{\infty} \;.
\end{aligned}
\end{equation}
All together, we have therefore proven the following result.
\begin{lemma}\label{lem:ub-lyap}
Let $ V $ be defined as in \eqref{e:G}, with $ G $ the corrector constructed
in Lemma~\ref{lem:local-lipschitz}. Then the
following differential inequality holds true:
\begin{equs}
\ud V (r_t) \leqslant   (1 + \eta G(\pi_t)) \ud V^{(0)} (r_t)+ \eta
V^{(0)} (r_t) \ud G(\pi_t) + 2 \eta^{2} \| G
\|_{\mathrm{Lip} (d_{\mathbf{P}}) } \| \kappa \|_{\infty}  V^{(0) } (r_{t})\ud t\;.
\end{equs}
\end{lemma}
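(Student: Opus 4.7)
The plan is to assemble the stated inequality by combining the Itô expansion \eqref{e:dV} with the covariation estimate \eqref{e:qv-bd}; in fact essentially all the hard work has already been laid out in the discussion preceding the lemma, so the proof reduces to collecting those pieces with the correct constants.

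First I would re-derive the starting identity \eqref{e:dV} by applying the Itô formula to the product $V(r) = V^{(0)}(r)(1 + \eta G(\pi))$. This requires two semimartingale inputs: the scalar semimartingale $r_t$, for which we already computed $\ud V^{(0)}(r_t)$ in \eqref{e:ito_v0}, and the process $G(\pi_t)$, which is a semimartingale by Lemma~\ref{lem:semimart} with decomposition \eqref{e:dG}. The product rule then yields three contributions: the drift/martingale piece weighted by $(1+\eta G(\pi_t))$, the piece $\eta V^{(0)}(r_t)\ud G(\pi_t)$, and the cross-variation term $\tfrac{\eta}{2} \ud\langle V^{(0)}(r), G(\pi)\rangle_t$.

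The second step is to control the cross-variation. Using the explicit martingale representations given for $M^{(0)}$ and $M^G$ (the latter requiring Fr\'echet differentiability of $G$, cf.\ Lemma~\ref{lem:semimart}), one computes their quadratic covariation by pairing the two stochastic integrands against the correlation kernel $\kappa$; this produces the expression $-\eta V^{(0)}(r_t) \langle DG(\pi_t), \mathcal{N}(\pi_t)\rangle\, \ud t$ with $\mathcal{N}(\pi) = \pi\cdot\kappa*\pi - \langle \pi\cdot\kappa*\pi,1\rangle \pi \in T_{\pi}$. Invoking the Fr\'echet derivative bound of Lemma~\ref{lem:frechet} and the crucial cancellation in $\mathcal{N}(\pi)/\pi$, together with $\|\pi\|_{L^1}=1$ and $\kappa \in L^\infty$, one arrives at the uniform-in-$\pi$ estimate $|\langle DG(\pi),\mathcal{N}(\pi)\rangle| \leqslant 4\|G\|_{\mathrm{Lip}(d_{\mathbf{P}})}\|\kappa\|_\infty$, hence \eqref{e:qv-bd}.

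Finally, I would substitute \eqref{e:qv-bd} into \eqref{e:dV}: the prefactor $\tfrac{\eta}{2}$ multiplied by the bound $\eta V^{(0)}(r_t) \cdot 4\|G\|_{\mathrm{Lip}(d_{\mathbf{P}})}\|\kappa\|_\infty$ collapses into the advertised $2\eta^2 \|G\|_{\mathrm{Lip}(d_{\mathbf{P}})}\|\kappa\|_\infty V^{(0)}(r_t)$. The only genuinely nontrivial ingredient is the uniform Fr\'echet bound on $DG$ through the Hilbert-metric Lipschitz seminorm of $G$; everything else is bookkeeping of an Itô product rule. The main obstacle, therefore, is not in this lemma per se but in ensuring that the corrector $G$ produced in Section~\ref{sec:lyap-func-constr} is indeed Lipschitz with respect to $d_{\mathbf{P}}$ with a finite constant — this is the content of Lemma~\ref{lem:local-lipschitz}, which must be invoked here to make $\|G\|_{\mathrm{Lip}(d_{\mathbf{P}})}$ a meaningful finite quantity.
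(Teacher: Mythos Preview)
Your proposal is correct and follows exactly the paper's approach: the lemma is stated immediately after the discussion that constitutes its proof, and you have accurately identified all the pieces — the It\^o product expansion \eqref{e:dV}, the semimartingale decomposition of $G(\pi_t)$ from Lemma~\ref{lem:semimart}, the explicit covariation computation leading to $\mathcal{N}(\pi)$, and the Fr\'echet bound from Lemma~\ref{lem:frechet} that produces \eqref{e:qv-bd}. The paper itself simply writes ``All together, we have therefore proven the following result'', so your summary is precisely what the authors intend.
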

\begin{remark}\label{rem:bdd}
In order to obtain Lemma~\ref{lem:ub-lyap}, and
therefore prove Theorem~\ref{thm:linearised-1}, we have used
the uniform bound \eqref{e:usegrad} on the Fr\'echet derivative of $ G $, which
is a consequence of the upcoming Lemma~\ref{lem:frechet}. This seems
substantially different
from the bound obtained in \cite[Corollary 5.11]{gu2023kpz}, which
estimates $ \| DG(\pi) \|_{\infty} \lesssim 1 + \| \pi \|_{\infty}  $, and
in our setting would lead to the estimate $ | \langle D G(\pi),
\mN(\pi) \rangle | \lesssim 1 + \| \pi \|_{\infty} $, which seems to be too weak
to close our argument. It is not clear to us how to obtain
a similar estimate without building on Hilbert's projective metric.


\end{remark}
With the upper bound of the previous lemma, we are ready to prove Theorem~\ref{thm:linearised-1}.

\begin{proof}[of Theorem~\ref{thm:linearised-1}]
proof follows by applying Lemma~\ref{lem:ub-lyap}, together with
\eqref{e:ito_v0} and \eqref{e:dG}: in the latter case, the existence of the
martingale $ M^{G} $ is guaranteed by Lemma~\ref{lem:ub-lyap}. Altogether, we
obtain
\begin{equation*}
\begin{aligned}
\ud V (r_{t})  \leqslant & \left\{ - \eta \lambda V(r_t)  + \frac{\eta^2}{2} V
(r_t) Q(\pi_t) + 4 \eta^{2} V^{(0) }(r_{t}) \| G
\|_{\mathrm{Lip} (d_{\mathbf{P}})} \| \kappa \|_{\infty} \right\} \ud t
\\
& + (1+ \eta G( \pi_{t})) \ud M_t^{(0)} + V^{(0) }(r_{t}) \ud M_{t}^{G} \\
\leqslant & \left\{ - \eta \lambda V(r_t)  + \frac{\eta^2}{2} V
(r_t) \| \kappa \|_{\infty} + 4 \eta^{2} V^{(0)}(r_{t}) \| G
\|_{\mathrm{Lip} (d_{\mathbf{P}})} \| \kappa \|_{\infty} \right\} \ud t
\\
& + (1+ \eta G( \pi_{t})) \ud M_t^{(0)} + V^{(0)}(r_{t}) \ud M_{t}^{G} \;.
\end{aligned}
\end{equation*}
Now we observe that the last drift term depends on $ V^{(0)} $ and not on $ V
$. At this point, we use that $ G $ is bounded, which is proven in
Proposition~\ref{prop:bdd} below, so that for $ \eta $ such that $ 2 \eta  \| G
\|_{\infty} \leqslant  1$ we have $ (1 + \eta G) \geqslant  1/2 $ (here $ \| G \|_{\infty} =
\sup_{\pi \in \mathbf{P}} |G(\pi)| $) and hence for such $ \eta $:
\begin{equation*}
\begin{aligned}
\ud V (r_{t})  \leqslant & \left\{ - \eta \lambda   + \frac{\eta^2}{2}  \|
\kappa \|_{\infty} + 8 \eta^{2} \| G \|_{\mathrm{Lip} (d_{\mathbf{P}})}
\| \kappa \|_{\infty} \right\} V(r_t)\ud t \\
& +  (1+ \eta G( \pi_{t})) \ud M_t^{(0)} + \eta V^{(0)}(r_{t}) \ud M_{t}^{G} \;.
\end{aligned}
\end{equation*}
Finally, for $ \eta $ such that
\begin{equation}\label{e:small-eta}
\begin{aligned}
\eta & \leqslant \eta_{0} \eqdef \min \left\{ \frac{\lambda}{2} \left\{ \frac{\| \kappa \|_{\infty}}{2} + 8  \| G
\|_{\mathrm{Lip} (d_{\mathbf{P}}) } \| \kappa \|_{\infty}
\right\}^{-1} \,, \,  \frac{1}{2 \| G \|_{\infty}} \right\} \;,
\end{aligned}
\end{equation}
we obtain that
\begin{equation*}
\begin{aligned}
\ud V (r_{t})  \leqslant & - \frac{\eta \lambda}{2}  V(r_t)  \ud t +
\ud \mM_{t} \;,
\end{aligned}
\end{equation*}
where $ \mM_{t} $ is the local martingale
\begin{equation*}
\begin{aligned}
\ud \mM_{t} =  (1 + \eta G(\pi_{t})) \ud M^{(0)}_{t} + \eta V^{(0)}(r_{t}) \ud
M^{G}_{t} \;.
\end{aligned}
\end{equation*}
Now we can deduce that $ e^{\frac{\eta \lambda}{2} t} V(r_{t}) $ is a
non-negative local super-martingale. By Fatou's lemma, given an arbitrary localising
sequence $ \{ \tau_{n} \}_{n \in \NN} $ we find
\begin{equs}[e:almost-done]
\EE \left[ e^{\frac{\eta \lambda}{2} t} V(r_{t}) \right] & =\EE \left[
\lim_{n \to \infty} e^{\frac{\eta \lambda}{2} t \wedge \tau_{n} } V(r_{t \wedge
\tau_{n}}) \right]  \leqslant \lim_{n \to \infty}\EE \left[
 e^{\frac{\eta \lambda}{2} t \wedge \tau_{n} } V(r_{t \wedge
\tau_{n}}) \right] \leqslant V(r_{0}) \;,
\end{equs}
so that the Lyapunov property is proven.
In particular, Theorem~\ref{thm:linearised-1} is proven, with
\begin{equation*}
\begin{aligned}
\zeta = \frac{\lambda}{2} \qquad  \text{ and } \qquad C = (1 - \eta_{0} \| G
\|_{\infty})^{-1} (1 + \eta_{0} \| G \|_{\infty}) \leqslant 3 \;,
\end{aligned}
\end{equation*}
where to obtain the last inequality we used that
\begin{equs}
(1 + \eta \| G \|_{\infty} ) V(r) \leqslant V^{(0)} (r) \leqslant (1- \eta \| G
\|_{\infty})^{-1}  V(r) \;,
\end{equs}
by our definition of $ \eta_{0} $, to replace $ V $ by $ V^{(0)} $ in \eqref{e:almost-done}.
\end{proof}
The rest of this section is devoted to proving the differential inequality in
Lemma~\ref{lem:ub-lyap}.

\subsection{Fr\'echet estimates on the corrector} \label{sec:frechet}

We start by proving an estimate on the Fr\'echet derivative of functionals on
$ \mathbf{P} $ that are Lipschitz continuous with respect to Hilbert's metric $
d_{\mathbf{P}} $, which is defined in \eqref{e:hilbert}. We write $
\mathrm{Lip}(d_{\mathbf{P}}) $ for the space of Lipschitz continuous
functionals on $ \mathbf{P} $, cf. \eqref{e:def-lip}.

\begin{lemma}\label{lem:frechet}
For any functional $G \in \mathrm{Lip}(d_{\mathbf{P}})$, and
any $\pi \in \mathbf{P}$ and $\mN \in T_\pi$ it holds that:
\begin{equs}
\limsup_{\delta \downarrow 0}\delta^{-1} |G(\pi + \delta \mN)- G(\pi)|
\leqslant 2\|G\|_{\mathrm{Lip}(d_{\mathbf{P}})} \| \mN / \pi\|_{\infty}\;.
\end{equs}
\end{lemma}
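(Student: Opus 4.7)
The plan is to unpack the definition of Hilbert's projective metric directly and exploit the fact that $\mN \in T_\pi$ forces a cancellation. Recall that on the cone of strictly positive functions on $\TT^d$, Hilbert's metric takes the form
\begin{equs}
d_{\mathbf{P}}(\varphi, \psi) = \log\Big(\sup_{x \in \TT^d} \tfrac{\varphi(x)}{\psi(x)}\Big) - \log\Big(\inf_{x \in \TT^d} \tfrac{\varphi(x)}{\psi(x)}\Big)\;.
\end{equs}
Since $\pi \in \mathbf{P}$ is continuous and strictly positive on the compact torus, $\inf \pi > 0$, so for $\delta > 0$ small enough the perturbation $\pi + \delta \mN$ is still strictly positive and (since $\int \mN = 0$) still integrates to one, hence belongs to $\mathbf{P}$.

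Next, I would compute $d_{\mathbf{P}}(\pi + \delta \mN, \pi)$ explicitly by writing the ratio as $1 + \delta \mN/\pi$, so that
\begin{equs}
d_{\mathbf{P}}(\pi + \delta \mN, \pi) = \log\!\big(1 + \delta \sup(\mN/\pi)\big) - \log\!\big(1 + \delta \inf(\mN/\pi)\big)\;.
\end{equs}
A first-order Taylor expansion in $\delta$ then yields
\begin{equs}
\limsup_{\delta \downarrow 0} \delta^{-1} d_{\mathbf{P}}(\pi + \delta \mN, \pi) = \sup_{x} (\mN/\pi)(x) - \inf_{x} (\mN/\pi)(x) \;.
\end{equs}

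To bound this oscillation by $2\|\mN/\pi\|_\infty$, I would use the crucial consequence of $\mN \in T_\pi$: the condition $\int \mN \ud x = 0$ rewrites as $\int (\mN/\pi)\,\pi \ud x = 0$, so $\mN/\pi$ has zero mean against the probability measure $\pi \ud x$. Consequently $\sup(\mN/\pi) \geqslant 0 \geqslant \inf(\mN/\pi)$, and the oscillation is bounded by $2\|\mN/\pi\|_\infty$. Combining this with the Lipschitz estimate $|G(\pi + \delta\mN) - G(\pi)| \leqslant \|G\|_{\mathrm{Lip}(d_{\mathbf{P}})} \, d_{\mathbf{P}}(\pi + \delta\mN, \pi)$ and dividing by $\delta$ yields the desired bound. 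There is no genuine obstacle here; the only point requiring care is that the factor $2$ (rather than something larger) genuinely reflects the mean-zero property of $\mN$ relative to $\pi$, which is precisely the cancellation highlighted in Remark~\ref{rem:bdd}.
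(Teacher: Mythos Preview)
Your proof is correct and follows essentially the same route as the paper: bound $|G(\pi+\delta\mN)-G(\pi)|$ by the Lipschitz constant times $d_{\mathbf{P}}(\pi+\delta\mN,\pi)$, expand the log-ratio to first order in $\delta$ to obtain the oscillation of $\mN/\pi$, and bound that oscillation by $2\|\mN/\pi\|_\infty$. One small remark: the mean-zero argument you invoke for the last step is unnecessary, since $\sup f - \inf f \leqslant 2\|f\|_\infty$ for any bounded $f$; the ``cancellation'' alluded to in Remark~\ref{rem:bdd} is a different phenomenon (the specific structure of $\mN(\pi)/\pi = \kappa*\pi - \langle \pi\cdot\kappa*\pi,1\rangle$ in the application), not the factor $2$ here.
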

\begin{proof}
For any $ \delta > 0$, if $ \delta <  \left( \min_{x \in \TT^{d}} \pi(x)
\right) / \| \mN \|_{\infty} $ we have that $
\pi + \delta \mN \in \mathbf{P} $. Therefore, for such $ \delta $ we have by
definition
\begin{equs}
|G(\pi + \delta \mN)- G(\pi)| \leqslant  \|G\|_{\mathrm{Lip}(d_{\mathbf{P}})}
d_{\mathbf{P}} (\pi + \delta \mN, \pi) \;.
\end{equs}
Now, from the definition of Hilbert's distance 
\begin{equs}
d_{\mathbf{P}} (\pi + \delta \mN, \pi)= \max \log \left( \frac{\pi + \delta \mN}{\pi}\right) -\min \log \left( \frac{\pi + \delta \mN}{\pi}\right) \;.
\end{equs}
Hence as $ \delta \downarrow 0 $ we obtain
\begin{equs}
\lim_{\delta \downarrow 0}\delta^{-1} d_{\mathbf{P}} (\pi + \delta \mN, \pi) = \max
\frac{\mN}{\pi} - \min \frac{\mN}{\pi} \leqslant 2\| \mN / \pi \|_{\infty}\;,
\end{equs}
which is the desired result.
\end{proof}
Next we recall a result which guarantees that $ (G (\pi_{t}))_{t \geqslant 0} $
is a semi-martingale. The proof of this result is analogous to \cite[Corollary
5.20]{gu2023kpz},
but we provide an easy self-contained proof below for the sake of completeness.
\begin{lemma}\label{lem:semimart}
The functional $ G \colon \mathbf{P} \to \RR $ constructed in
Lemma~\ref{lem:local-lipschitz} is Fr\'echet differentiable and satisfies
\begin{equs}
\ud G (\pi_{t}) = F(\pi_{t}) \ud t + \langle DG (\pi_t), \pi_t \ud W -
\langle\pi_t \ud W ,1 \rangle \pi_t \rangle\;.
\end{equs}
\end{lemma}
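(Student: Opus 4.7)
The plan is to (i) recall the SPDE satisfied by the projective process $\pi_t$ and identify its martingale increment, (ii) establish Fr\'echet differentiability of the corrector $G$, and (iii) apply It\^o's formula to $G(\pi_t)$. Starting from the SPDE representation \eqref{eqn:angular} for $\pi_t$ (which is obtained by normalising the flow $\Phi_t$ by its mass $r_t$ and applying It\^o's formula in infinite dimensions), $\pi_t$ satisfies
$$\ud \pi_t = A(\pi_t)\ud t + \pi_t \ud W_t - \langle \pi_t \ud W_t, 1\rangle \pi_t,$$
with martingale increment lying in the tangent space $T_{\pi_t}$ and deterministic drift $A(\pi)$ collecting the diffusion, the linear term, and the It\^o correction coming from the normalisation. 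The associated generator is the operator $\mL$ appearing in \eqref{e:G}.

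For Fr\'echet differentiability, I would exploit the integral representation of $G$ underlying Lemma~\ref{lem:local-lipschitz}. Because $F$ is centered with respect to the invariant measure of $(\pi_t)_{t \geqslant 0}$ and Corollary~\ref{cor:syncrho} provides exponential synchronisation in Hilbert's metric, the Poisson equation $\mL G = F$ admits the representation
$$G(\pi) = -\int_0^\infty \big( \EE[F(\pi_t^\pi)] - \EE[F(\pi_\infty)] \big) \ud t,$$
where $\pi_t^\pi$ denotes the projective process started at $\pi$. The Fr\'echet derivative at $\pi$ in a direction $\mN \in T_\pi$ then reduces to differentiating the flow $\pi \mapsto \pi_t^\pi$: the Hilbert-metric contraction of Corollary~\ref{cor:syncrho} ensures that this sensitivity decays exponentially in $t$ uniformly in $\pi$, so one may differentiate under the integral. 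Lemma~\ref{lem:frechet} furnishes the uniform bound needed to pass to the limit of difference quotients, producing an honest directional derivative that is linear and bounded on $T_\pi$.

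With Fr\'echet differentiability in hand, the infinite-dimensional It\^o formula applied to $G(\pi_t)$ gives
$$\ud G(\pi_t) = \mL G(\pi_t)\ud t + \langle DG(\pi_t), \pi_t \ud W_t - \langle \pi_t \ud W_t, 1\rangle \pi_t\rangle,$$
and substituting $\mL G = F$ yields the claimed decomposition with $\ud M^G_t = \langle DG(\pi_t), \pi_t \ud W_t - \langle \pi_t \ud W_t, 1\rangle \pi_t\rangle$. The hard part will be cleanly promoting the limsup bound of Lemma~\ref{lem:frechet} to genuine Fr\'echet differentiability: the flow-sensitivity route sketched above is natural because Hilbert's metric contracts exponentially, but some care is needed to ensure that the resulting derivative is continuous on $\mathbf{P}$ and that the standard infinite-dimensional It\^o formula applies to the pair $(\pi_t, G)$, since $\pi_t$ takes values in an affine subspace of $C(\TT^d)$ on which $G$ is only defined through the abstract Poisson construction.
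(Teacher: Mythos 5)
Your overall plan — use the integral representation of $G$ provided by the Poisson equation, differentiate under the integral via the exponential synchronisation of the projective flow, then invoke the infinite-dimensional It\^o formula — is the same route the paper takes.~However, the crucial technical step, namely establishing that the difference quotients actually \emph{converge} (rather than just being bounded), is where your proposal has a gap, and you do flag this yourself as ``the hard part''.~Invoking Lemma~\ref{lem:frechet} there is not enough: that lemma gives only a $\limsup$ bound on the directional increment for a functional already known to be $d_{\mathbf{P}}$-Lipschitz, and so cannot by itself upgrade Lipschitz continuity to Fr\'echet differentiability.~The paper closes this gap with a direct computation of the derivative of the projective flow,
\begin{equs}
\langle D\Phi^{\pr}_t(\pi),\eta\rangle
= \bigl(\Phi^{\pr}_t\eta - \Phi^{\pr}_t\pi\bigr)\,
\frac{\|\Phi_t\eta\|_{L^1}}{\|\Phi_t\pi\|_{L^1}}\;,
\end{equs}
and then uses Lemma~\ref{lem:bush} together with the Birkhoff contraction constant $\tau(\Phi^{\pr}_{i-1,i})$ to show that $\EE\bigl[\|\langle D\Phi^{\pr}_t(\pi),\eta\rangle\|_{L^1}\bigr]$ decays exponentially in $t$, so that the candidate for $\langle DG(\pi),\eta\rangle$ is given by an absolutely convergent integral.~You gesture at exactly this mechanism (``the flow-sensitivity route''), but you do not exhibit the explicit differential of the flow, which is what actually produces a well-defined, linear, bounded derivative.~Separately, for the It\^o step you should in fact derive the correct sign conventions for $G$ and $\mL G = F$ from the chosen integral representation, and verify that the resulting increment $\pi_t\,\ud W - \langle\pi_t\,\ud W,1\rangle\pi_t$ lies in $T_{\pi_t}$ so that $\langle DG(\pi_t),\cdot\rangle$ is applied to an admissible direction — the paper does note this point in the surrounding discussion of $T_\nu$.
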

\begin{proof}
To establish this result it suffices to prove that $ G $ is Fr\'echet
differentiable. To prove that $ G $ is differentiable we use the representation 
\begin{equs}
G(\pi) = \int_{0}^{\infty} \EE[F ( \Phi^{\pr}_{t} \pi) ] \ud t \;, \quad
\Phi^{\pr}_{t} \pi = \frac{\Phi_{t} \pi}{\| \Phi_{t} \pi \|_{L^{1}}} \;,
\end{equs}
with $ \Phi $ the flow defined in \eqref{e:Phi}. Now, one can differentiate $
\Phi^{\pr}_{t} $ with respect to the initial datum to obtain
\begin{equs}
\langle D \Phi^{\pr}_{t} (\pi), \eta \rangle & = \frac{\Phi_{t} \eta}{\|
\Phi_{t} \pi\|_{L^{1}}} - \frac{\Phi_{t} \pi}{\| \Phi_{t} \pi
\|_{L^{1}}^{2}} \langle \Phi_{t} \eta, 1 \rangle \\
& = \left( \Phi^{\pr}_{t} \eta - \Phi^{\pr}_{t} \pi \right) \frac{\| \Phi_{t} \eta
\|_{L^{1}}}{\| \Phi_{t} \pi \|_{L^{1}}} \;,
\end{equs}
where the last identity holds assuming without loss of generality that $ \eta
> 0 $ (up to separating $ \eta = \eta_{+} +1 - (\eta_{-}+1) $ where $
\eta_{\pm}$ are respectively the positive and negative
parts of $ \eta  $, and using linearity of the derivative). Therefore,
combining the synchronisation in Corollary~\ref{cor:syncrho}
and the spectral gap estimate in Lemma~\ref{lem:dM}, one obtains that the
following integral is convergent, and is the Fr\'echet derivative of $ G
$: 
\begin{equs}
\langle D G(\pi), \eta \rangle = - \int_{0}^{\infty} \int_{(\TT^{d})^{2}}   \kappa
(x, y) \EE \left[ \langle D \Phi_{t}^{\mathrm{pr}} (\pi), \eta\rangle (x) \Phi^{\mathrm{pr}}_{t}
(\pi)(y) \right] \ud x \ud y  \ud t\;.
\end{equs}
We can bound the term inside the integral by
\begin{equs}
\| \kappa \|_{\infty} \EE \left[ \| \Phi^{\mathrm{pr}}_{t} (\pi) \|_{L^{1}} \| \langle
 D \Phi_{t}^{\mathrm{pr}} (\pi), \eta\rangle\|_{L^{1}} \right] \leqslant \|
\kappa \|_{\infty} \EE \left[ \| \langle
 D \Phi_{t}^{\mathrm{pr}} (\pi), \eta\rangle\|_{L^{1}} \right] \;.
\end{equs}
As for the last term, we can bound through the previous computation
\begin{equs}
 \| \langle
 D \Phi_{t}^{\mathrm{pr}} (\pi), \eta\rangle\|_{L^{1}} = \| \Phi^{\pr}_{t} \eta - \Phi^{\pr}_{t} \pi \|_{L^{1}}  \frac{\| \Phi_{t} \eta
\|_{L^{1}}}{\| \Phi_{t} \pi \|_{L^{1}}} \;.
\end{equs}
Since $ \eta >  0 $ there exists a $ c(\eta, \pi) $ such that $ \pi \leqslant c
\eta $. Hence, we can always bound $ \Phi_{t} \eta \leqslant c
\Phi_{t} \pi $ for
all $ t \geqslant 0 $. In particular, we deduce that
\begin{equs}
\EE \left[ \| \langle
 D \Phi_{t}^{\mathrm{pr}} (\pi), \eta\rangle\|_{L^{1}}  \right] \lesssim  \EE
\left[ \| \Phi^{\pr}_{t} \eta - \Phi^{\pr}_{t} \pi \|_{L^{1}}
\right] \;,
\qquad \forall t \geqslant 0\;.
\end{equs}
Now we can use \eqref{lem:bush} to obtain that for some $ \zeta > 0 $
\begin{equs}
\EE \left[ \| \Phi^{\pr}_{t} \eta - \Phi^{\pr}_{t} \pi \|_{L^{1}}  \right]
& \leqslant \EE \left[  \exp \left( d_{\mathbf{P}}(\pi, \eta ) \prod_{i=1}^{\lfloor t \rfloor} \tau
\left( \Phi^{\mathrm{pr}}_{i-1, i} \right)  \right) -1  \right] \\
& \lesssim \EE \left[  e^{d_{\mathbf{P}}(\pi, \eta )} d_{\mathbf{P}}(\pi, \eta )\prod_{i=1}^{\lfloor t \rfloor} \tau
\left( \Phi^{\mathrm{pr}}_{i-1, i} \right)   \right] \\
& \lesssim e^{d_{\mathbf{P}}(\pi, \eta )}d_{\mathbf{P}}(\pi, \eta)\EE \left[ \tau
\left( \Phi^{\mathrm{pr}}_{0, 1} \right) \right]^{\lfloor t
\rfloor} \lesssim e^{d_{\mathbf{P}}(\pi, \eta )}d_{\mathbf{P}}(\pi, \eta)
e^{- \zeta t} \;,
\end{equs}
where we have used the inequality $ e^{t x} -1 \leqslant e^{x} tx $ for $ t \in
[0, 1] $. Furthermore, we have written $ \Phi^{\mathrm{Pr}}_{s, t} $ for the
flow of the projective dynamic and $ \tau( A) $ for the contraction constant
with respect to Hilbert's distance of a linear operator acting on projective
space, see Theorem~\ref{thm:contraction}. This proves the desired result.
\end{proof}

\section{Proof of the main result: the linearisation step}\label{sec:linearisation}

In this section we complete the proof of Theorem~\ref{thm:moment-estimate},
bringing together all the elements of our analysis. In particular, the focus of this section lies in the linearisation
step, meaning that we reduce the analysis of \eqref{e:main} to that of the
linearised equation \eqref{e:linearised-n}, which was already analysed in
Section~\ref{sec:lyap-func}.

We split the proof of Theorem~\ref{thm:moment-estimate} into two cases: first we
treat a ``partially linear''
case, in which we assume $ \sigma (u) = u $. This case is less
technical, and allows us to introduce the strategy of the proof through a
suitable linearisation. We then treat the general nonlinear case, which
requires a few additional technical tweaks, in terms of suitable 
stopping times and cut-offs, to obtain the final result. In particular, in the
fully nonlinear case, we must deal with potential blow-up appearing in the
linearisation, due to the possible irregularity of the initial condition. Where possible, we
avoid repetitions.

Our approach to link the analysis of
\eqref{e:linearised-n} with that of the nonlinear equation \eqref{e:main} is to construct a process $ t \mapsto
w_{t} $ with the property that
\begin{equs}
w_{t} \lesssim  u_{t}\;, \qquad \forall t \geqslant 0 \;,
\end{equs}
which we build by following
the flow $ \Phi  $ associated to \eqref{e:linearised-n}, defined in
\eqref{e:Phi}, up to certain stopping times.
More precisely, we will define suitable stopping times
\begin{equs}
0 = \tau_{0} < \tau_{1} < \dots < \tau_{n} < \dots \;, \qquad \tau_{n} \uparrow
\infty \;,
\end{equs}
such that $ w_{t} \simeq \Phi_{\tau_{i}, t}[w_{\tau_{i}}] $ for all $ t \in
[\tau_{i}, \tau_{i+1}) $. The definition of the stopping times and how we
change $ w $ at these stopping times will be described in the next sections.
Since the construction of the stopping times is simpler in the partially linear case $ \sigma
(u) = u$, we start in that setting.

\subsection{The partially linear case} \label{sec:partilly-linear}
If \(\sigma\) is linear, that is $ \sigma (u) = u $, then we can construct
a process $ (w_{t})_{t \geqslant 0} $ with the properties described above by
using a comparison principle.
Indeed, because $ f \in C^{1} $ and $
f(0) = 0 $,  there exists a $ \ve_{0}  $ such that
\begin{equation} \label{e:epszero}
\begin{aligned}
f(u) \geqslant \left( f^{\prime} (0) - \frac{\lambda}{2} \right)  u \;, \qquad \forall u \in
[0, \ve_{0}] \;.
\end{aligned}
\end{equation}
We will consider $ \ve_{0} $ fixed henceforth.
Since we are interested only in the behaviour of \eqref{e:main} near $ u \equiv
0$, we introduce a cut-off process $ t \mapsto w_{t} $, which is comparable to the
linearisation \eqref{e:Phi} and bounds the solution $ t \mapsto u_{t} $ from below:
\begin{equation*}
\begin{aligned}
0 \leqslant w_{t} \leqslant u_{t}\;, \quad \text{ and } \quad \| w_{t}
\|_{\infty} \leqslant \ve_{0}  \;, \quad \forall t \geqslant 0\;.
\end{aligned}
\end{equation*}
The process $ w_{t} $ will be discontinuous (in time), at certain stopping
times, after which it will follow a dampened versions of the flow in
\eqref{e:Phi}. Here, to deal with the non-linearity we allow ourselves a small
``wiggling room'' by defining
\begin{equs}[e:xi]
\Xi_{s, t} = e^{- \frac{\lambda}{2} (t -s)} \Phi_{s, t} \;,
\end{equs}
so that $ \Xi $ is again the flow of a linear SPDE of the type \eqref{e:Phi},
only with $ \gamma = f^{\prime} (0) - \lambda /2 $. This will not be an issue,
because $ \Xi $ has Lyapunov exponent $ \lambda/2 $, which is still positive.
To be precise, let us introduce the cut-off operator  
\begin{equ}[e:def-cutoff]
\mT \varphi (x) = \min \{ \varphi (x), \ve  \} \;, \quad \forall
\varphi \in C(\TT^{d}; [0, \infty)) \quad  \text{ and for } \quad
\ve < \ve_{0}   \;.
\end{equ}
Note that $ \mT $ depends on the parameter $ \ve \in (0, \ve_{0}) $, but we will omit such
dependence to keep the notation clean. We will highlight when this dependence
is important. 
\begin{definition}[Piecewise linearised process]\label{def:w-cut-off}
For any $ \mf{t} > 0 $ and $ \ve \in (0, \ve_{0}) $
(with $ \ve_{0} $ as in \eqref{e:epszero}) let us define, for $ \Xi $ the flow associated to
\eqref{e:Phi} and any $ w \in C (\TT^{d}; [0, \infty)) $
\begin{equation*}
\begin{aligned}
\tau (s, w) = \inf \{ t > s  \; \colon \; \| \Xi_{s, t}[w] \|_{\infty}
\geqslant \ve_{0} \} \wedge(s+ \mf{t}) \;.
\end{aligned}
\end{equation*}
Next we define iteratively for all $ i \in \NN$:
\begin{equation*}
\begin{aligned}
\tau_{0} = 0 \;, \qquad \tau_{i+1} = \tau (\tau_{i},
\mT w_{\tau_{i}}) \;, \qquad w_{t} = \Xi_{\tau_{i}, t}
[\mT w_{\tau_{i}}] \;, \qquad \forall t \in [\tau_{i}, \tau_{i+1}) \;.
\end{aligned}
\end{equation*}
In particular, by comparison, the process $ w_{t} $ that we have
constructed satisfies the following: 
\begin{equation*}
\begin{aligned}
\| w_{t} \|_{\infty} \leqslant \ve_{0} \;, \qquad w_{t} \leqslant  u_{t}\;, \qquad
\forall t \geqslant 0 \;.
\end{aligned}
\end{equation*}
\end{definition}
We are now ready to prove Theorem~\ref{thm:moment-estimate} in the case $
\sigma(u) = u $, building on 
results which are stated in the upcoming sections.

\begin{proof}[of Theorem~\ref{thm:moment-estimate} in the linear case $ \sigma
(u) = u$ ]
The proof of Theorem~\ref{thm:moment-estimate} follows if we can prove that
there exist $ \eta, \zeta, C_{1}, C_{2} > 0 $ such that:
\begin{equation}\label{e:aim}
\begin{aligned}
\EE \left[ \left(\min_{x \in \TT^{d}} w (t, x)
\right)^{- \eta} \right] \leqslant C_{1} e^{- \zeta t} \left(\min_{x \in \TT^{d}} w (0, x)
\right)^{- \eta} + C_{2} \;, \qquad \forall t \geqslant 0 \;,
\end{aligned}
\end{equation}
where $ w_{t} $ is defined as in Definition~\ref{def:w-cut-off} with $
w_{0}= \mT u_{0}$.
To see that this implies Theorem~\ref{thm:moment-estimate}, we use that by construction $ w_{t} \leqslant
u_{t} $, so that as a consequence of the previous estimate
\begin{equs}
\EE \left[ \left(\min_{x \in \TT^{d}} u (t, x)
\right)^{- \eta} \right] \leqslant C_{1} e^{- \zeta t} \left(\min_{x \in \TT^{d}} w (0, x)
\right)^{- \eta} + C_{2}\;,
\end{equs}
and in addition that at time $ t = 0 $ we can bound $ \min_{x \in
\TT^{d}} u_{0} (x) \lesssim_{\ve} \min_{x \in \TT^{d}} w_{0}(x) $. Now, to obtain \eqref{e:aim} we use the
sequence of stopping times $ \{ \tau_{i} \}_{i \in \NN} $ as in
Definition~\ref{def:w-cut-off}. Our aim is to establish the Lyapunov property
for the process at the stopping times $ \tau_{i} $, and then build on this to
obtain the Lyapunov property for the process at deterministic times. This is
achieved by applying Lemma~\ref{lem:discretisations}. Here the key point is
that to obtain the Lyapunov property we must balance the contraction constant
$ \tilde{c} \in (0, 1) $ appearing in the discretised problem with certain
error terms. 
More precisely, as a consequence of Proposition~\ref{prop:intermediate} and
Proposition~\ref{prop:unif} we obtain that for some  $ \tilde{c}
\in (0,1), \widetilde{C}_{2}, \widetilde{C}_{3} \in (0, \infty) $ and $ \eta >
0 $ (up to replacing the $ \eta $ in Proposition~\ref{prop:intermediate} with
$ 2 \eta $):
\begin{equs}
\EE_{\tau_{i}} \left[  \left( \min_{x \in \TT^{d}} w (\tau_{i+1}, x)\right)^{-
2 \eta}  \right] & \leqslant \tilde{c} \left( \min_{x \in \TT^{d}} w (\tau_{i}, x)\right)^{-
2 \eta} + \widetilde{C}_{2} \;, \\
\EE_{\tau_{i}} \left[ \sup_{ \tau_{i}  \leqslant s < \tau_{i+1} }
\left( \min_{x \in \TT^{d}} w (s, x)\right)^{- 2 \eta} \right] &\leqslant
\widetilde{C}_{3}\left( \min_{x \in \TT^{d}} w (\tau_{i}, x)\right)^{- 2 \eta} \;.
\end{equs}
Then we are in the setting of
Lemma~\ref{lem:discretisations}, and \eqref{e:aim} follows if we can prove the
following estimates for some $ t_{\star} > 0 $:
\begin{equs}
\sum_{i \in \NN} \PP (t_{\star} \in
[\tau_{i}, \tau_{i +1}))^{\frac{1}{2}} & < \infty \;, \label{e:cont-aim1}\\
\sum_{i \in \NN} \widetilde{C}_{3} \tilde{c}^{i} \PP (t_{\star} \in [\tau_{i},
\tau_{i+1}))^{\frac{1}{2}} & < 1 \;, \label{e:cont-aim}
\end{equs}
since this would imply that for some $ c \in (0, 1) $ and $ C > 0 $
\begin{equs}
\EE \left[ \left(\min_{x \in \TT^{d}} w (t_{\star}, x)
\right)^{- \eta} \right] \leqslant c  \left(\min_{x \in \TT^{d}} w (0, x)
\right)^{- \eta} + C\;,
\end{equs}
which in turn implies \eqref{e:aim}.

Let us first concentrate on
proving the second estimate \eqref{e:cont-aim}.
To achieve this, we must choose appropriately the
parameters $ \mf{t}, \eta $ and $ \ve $. As for the constant $
\widetilde{C}_{3} $ we use that by
Proposition~\ref{prop:unif} 
we obtain an upper bound on $ \widetilde{C}_{3} $  that is uniform in $
\ve, $ and $ \eta \in (0, \eta_{0}) $ for some $ \eta_{0} > 0 $. Namely, we
have that (independently of $ \ve> 0 $)
\begin{equs}
C_{3} (\ve, \mf{t}) \leqslant e^{\alpha \mf{t} } \;,
\end{equs}
for some constant $ \alpha > 0 $.
In addition, if we choose $ \ve $ appropriately, we obtain an upper bound
on $ \tilde{c} $ that is uniform over $ \eta$. Namely, by
Proposition~\ref{prop:intermediate}, up to choosing a potentially smaller $
\eta_{0} > 0 $, we find that for any $
\mf{t} > 1$ there exists an $ \ve(\mf{t}) \in (0, \ve_{0}) $ such that
\begin{equs}
\tilde{c} (\mf{t}, \eta, \ve(\mf{t})) & \leqslant \widetilde{C}_{1}
(\eta_{0}) e^{- \zeta \eta \mf{t}} \;,
\end{equs}
for some $ \zeta > 0 $ independent of all other parameters.
Therefore, choose $ \mf{t} >1 $ sufficiently large and $ \ve(\mf{t}) > 0
$ sufficiently small for the above estimates to hold. Then, if $ t_{\star} =
(n+1) \mf{t} $, for some $ n \in \NN $ to be fixed later one, we find that by
construction $ \PP (t_{\star} \leqslant \tau_{n}) = 0 $ and hence
\begin{equs}
\sum_{i \in \NN } \widetilde{C}_{3}
\tilde{c}^{i} \PP (t_{\star} \in [\tau_{i}, \tau_{i+1}))^{\frac{1}{2}}
& \leqslant \sum_{i \geqslant n}\widetilde{C}_{3} \tilde{c}^{i} \\
& \leqslant \widetilde{C}_{3} \widetilde{c}(\mf{t}, \eta, \ve(\mf{t}))^{n}
\frac{1}{1- \widetilde{c}(\mf{t}, \eta, \ve(\mf{t}))} \\
& \leqslant \overline{C} e^{\alpha \mf{t}  - n \zeta \eta \mf{t}} \;,
\end{equs}
for some constant $ \overline{C} > 0 $. It follows that if $ n $ is chosen
sufficiently large, then $ \overline{C} e^{\alpha \mf{t}  - n \zeta \eta
\mf{t}} < 1 $ as desired.

At this point all the parameters of the problem have been fixed and we are left
with checking \eqref{e:cont-aim1}. This is a consequence of
Lemma~\ref{lem:lb-st} based on the uniform estimate on the linear flow $\Xi$ obtained in Lemma~\ref{lem:ub}, together with the same arguments as in the proof of
\cite[Lemma 5.2]{hairer2023spectral}. Indeed, Lemma~\ref{lem:lb-st} guarantees
that the stopping times do not kick in too quickly, so that eventually
\begin{equs}
\PP (t_{\star} \in [\tau_{i}, \tau_{i+1})) \leqslant c e^{- \widetilde{c} i}
\;,
\end{equs}
for some constants $ c, \widetilde{c} $ which depend on all the parameters of
the problem. This concludes the proof of the result.
\end{proof}

\subsection{The discrete Lyapunov property in the partially linear case}
\label{sec:sup-lin}

In this subsection we collect a number of supporting statements that
are required for the proof of Theorem~\ref{thm:moment-estimate} in the
partially linear case $ \sigma (u) = u $.
We start with a discrete Lyapunov
property of the process $ (w_{t})_{t \geqslant 0} $, evaluated at the stopping
times $ \{ \tau_{i} \}_{i \in \NN} $ from Definition~\ref{def:w-cut-off}.

\begin{proposition}\label{prop:intermediate}
In the setting of Theorem~\ref{thm:moment-estimate}, there exist $ \zeta,
\eta_{0} > 0 $
such that the following holds. For any $ \mf{t} >1 $ there exists an $
\ve (\mf{t}) \in (0, \ve_{0}) $ and a constant $
\widetilde{C}_{1}(\eta_{0})$ (independent of $ \ve $) and $\widetilde{C}_{2} ( \mf{t},
\ve, \eta_{0}) > 0 $ such that for all $i \in \NN$ and $ \eta \in (0,
\eta_{0}) $
\begin{equation*}
\begin{aligned}
\EE_{\tau_{i}} \left[ \left(\min_{x \in \TT^{d}} w ( \tau_{i+1}, x)
\right)^{- \eta} \right] \leqslant \widetilde{C}_{1}(\eta_{0}) e^{- \eta \zeta \mf{t}}
\left(\min_{x \in \TT^{d}} w ( \tau_{i}, x) \right)^{- \eta}+
\widetilde{C}_{2}( \mf{t}, \ve, \eta_{0}) \;.
\end{aligned}
\end{equation*}
\end{proposition}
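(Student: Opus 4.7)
The plan is to reduce via the strong Markov property to the case $i = 0$, apply the linear Lyapunov estimate from Section~\ref{sec:lyap-func} to the damped flow $\Xi$ (which has positive Lyapunov exponent $\lambda/2$), and then separately handle the early-stopping event $\{\tau_{1} < \mf{t}\}$ through heat-kernel regularity combined with a large-deviation bound.

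After strong Markov, write $w_{0} = \mT w(\tau_{0})$, so that $\|w_{0}\|_{\infty} \leqslant \ve$ and
\begin{equs}
(\min w_{0})^{-\eta} = \max\{(\min w(\tau_{0}))^{-\eta},\, \ve^{-\eta}\} \leqslant (\min w(\tau_{0}))^{-\eta} + \ve^{-\eta} \;.
\end{equs}
Since $\Xi_{s,t} = e^{-(\lambda/2)(t-s)}\Phi_{s,t}$ is the flow of the linear SPDE with drift $\gamma - \lambda/2$ and positive Lyapunov exponent $\lambda/2$, Theorem~\ref{thm:linearised-1} applies to $\Xi$; combined with its upgrade to minima in Proposition~\ref{prop:lin}---a consequence of the parabolic regularity estimates of Section~\ref{sec:lbds}---it yields constants $\eta_{0}, \zeta, C > 0$ depending only on $\lambda$ such that, for every $\eta \in (0, \eta_{0})$ and $t \geqslant 1$,
\begin{equs}[e:plan-lyap]
\EE\bigl[(\min \Xi_{0,t}[w_{0}])^{-\eta}\bigr] \leqslant C\, e^{-\eta \zeta t}\, (\min w_{0})^{-\eta} \;.
\end{equs}

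Next I split the expectation according to whether $\tau_{1}=\mf{t}$ or $\tau_{1}<\mf{t}$. On $\{\tau_{1}=\mf{t}\}$ we have $w(\tau_{1}) = \Xi_{0, \mf{t}}[w_{0}]$, so \eqref{e:plan-lyap} at the deterministic time $\mf{t}$ directly gives the main contribution $Ce^{-\eta \zeta \mf{t}}[(\min w(\tau_{0}))^{-\eta} + \ve^{-\eta}]$, matching the form of the claim. On $\{\tau_{1}<\mf{t}\}$, by definition $\|w(\tau_{1})\|_{\infty} = \ve_{0}$; fix $\delta \in (0, 1)$. For $\tau_{1} \in [\delta, \mf{t})$, the heat-kernel lower bounds of Section~\ref{sec:lbds} applied to the flow over the window $[\tau_{1}-\delta, \tau_{1}]$ yield $\min w(\tau_{1}) \geqslant \ve_{0}/\mR_{\delta}$, where $\mR_{\delta}$ is a random heat-kernel sup-to-inf ratio over a window of length $\delta$ whose $\eta$-moments are uniformly bounded by time-homogeneity of the noise; this sub-event therefore contributes only a constant absorbed in $\widetilde{C}_{2}$. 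For $\tau_{1}<\delta$, I will use Cauchy--Schwarz together with \eqref{e:plan-lyap} at exponent $2\eta$ (shrinking $\eta_{0}$ if needed so that $2\eta < \eta_{0}$), combined with the inclusion
\begin{equs}
\{\tau_{1} < \delta\} \subseteq \Bigl\{\sup_{s \leqslant \delta} \|\Xi_{0,s}[w_{0}]\|_{\infty}/\|w_{0}\|_{\infty} \geqslant \ve_{0}/\ve\Bigr\}\;,
\end{equs}
whose probability is, by linearity, independent of the size of $w_{0}$ and decays rapidly as $\ve_{0}/\ve \to \infty$ thanks to Gaussian-type tails for the log-growth of the linear flow (cf.\ Lemma~\ref{lem:ub}). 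Choosing $\ve=\ve(\mf{t})$ small enough that this probability is bounded by $e^{-2 \eta \zeta \mf{t}}$ then makes the contribution of this sub-event fit the required form.

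The hard part will be this last calibration: one has to tune $\ve(\mf{t})$ to the Lyapunov rate $\zeta$ so that the large-deviation probability of the linear flow reaching the threshold $\ve_{0}$ within time $\delta$ beats the factor $(\min w(\tau_{0}))^{-\eta}$ produced by Cauchy--Schwarz. It is precisely this calibration that forces $\ve$ to depend on $\mf{t}$ in the statement of the proposition, and it is only possible because the logarithm of the growth factor of $\Xi$ has Gaussian-type tails uniform in the initial datum.
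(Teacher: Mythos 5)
Your overall strategy parallels the paper's --- strong Markov reduction to $i = 0$, Proposition~\ref{prop:lin} on $\{\tau_1 = \mf{t}\}$, and Cauchy--Schwarz plus a small-probability bound (via Lemma~\ref{lem:ub}, with $\ve(\mf{t})$ tuned against $\zeta$) for early stopping --- and your calibration of $\ve$ against $\mf{t}$ and $\zeta$ is the right idea. But the specific treatment of the early-stopping event has a genuine gap. The backward-in-time Harnack argument on $\{\tau_1 \in [\delta, \mf{t})\}$ bounds $\min_x w(\tau_1-,x)$ by $\ve_0 / \mR_\delta$, where $\mR_\delta$ is the sup-to-inf ratio of the kernel of $\Xi$ over $[\tau_1 - \delta, \tau_1]$, and asserts bounded $\eta$-moments of $\mR_\delta$ ``by time-homogeneity of the noise.'' However $\tau_1 - \delta$ is not a stopping time: the left endpoint of your window anticipates the future, so time-homogeneity alone does not give moment bounds. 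Making this rigorous would require a union bound over a deterministic $\delta$-grid on $[0, \mf{t}]$ together with an extension of Lemma~\ref{lem:c-bd} controlling the kernel ratio over a continuum of random windows --- none of which appears in the proposal. Similarly, on $\{\tau_1 < \delta\}$ you invoke the linear Lyapunov estimate at exponent $2\eta$; that estimate holds at a deterministic time, whereas you need the negative moment of $w$ at the random time $\tau_1$. The right tool is again a forward kernel bound, $\min_x \Xi_{0, \tau_1}[w_0](x) \geqslant c_K(0, \tau_1)\, \min_x w_0(x)$, together with $\EE\big[\sup_{0\leqslant t\leqslant 1} c_K(0,t)^{-2\eta}\big]<\infty$ from Lemma~\ref{lem:c-bd}.

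The paper avoids both difficulties by looking only forward from stopping times and never using $\max_x w(\tau_1-,x)=\ve_0$. On the early-stopping event it applies Cauchy--Schwarz directly against $\PP_{\tau_i}(\tau_{i+1} < \tau_i + \mf{t})$, and Lemma~\ref{lem:ub} makes this $\leqslant e^{-2\zeta\mf{t}}$ uniformly on the \emph{whole} interval, so your $\delta$-split buys nothing. It then nests a second split, $\tau_{i+1}-\tau_i < 1$ versus $\geqslant 1$, using forward $c_K$-bounds from Lemma~\ref{lem:c-bd} in the short-time case and Proposition~\ref{prop:lin} at the stopping time in the long-time case, in both cases returning $(\min_x w(\tau_i,x))^{-2\eta}$ inside the Cauchy--Schwarz.
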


\begin{proof}
Recall that $ w $ is c\'adl\'ag, with possible jumps at the stopping times $
\tau_{i} $. Therefore, we start by bounding the jump at time $ \tau_{i+1} $, where
\begin{equation*}
\begin{aligned}
w (\tau_{i +1}, x) = \mT w (\tau_{i +1} -, x) \;.
\end{aligned}
\end{equation*}
Therefore
\begin{align*}
    \left( \min_{x \in \TT^{d}} w ( \tau_{i+1}, x) \right)^{-\eta} & \leqslant
\left(\min_{x \colon w ( \tau_{i+1}, x) \leqslant  \ve} w ( \tau_{i+1}, x) \right)^{-\eta} +\ve^{-\eta} \\
    & \leqslant \left(\min_{x \in \TT^d} w ( \tau_{i+1}-, x) \right)^{-\eta} +\ve^{-\eta} \;.
\end{align*}
In particular, we have obtained the bound:
\begin{equation}\label{e:prf-prop-1}
\begin{aligned}
\EE_{\tau_{i}} \left[ \left(\min_{x \in \TT^{d}} w ( \tau_{i+1}, x)
\right)^{- \eta} \right] \leqslant  \EE_{\tau_{i}} \left[ \left(\min_{x \in \TT^{d}} w (
\tau_{i+1}-, x) \right)^{- \eta} \right] + \ve^{-\eta}.
\end{aligned}
\end{equation}
Now we will prove the result by obtaining a suitable upper bound to the right
hand-side of \eqref{e:prf-prop-1}. To do so, we distinguish between the events
\begin{align*}
    \tau_{i+1} = \tau_i + \mf{t}\;, \qquad \text{ and } \qquad \tau_{i+1} < \tau_i + \mf{t} \;.
\end{align*}

\textit{The event $ \tau_{i+1} = \tau_{i} + \mf{t} $.}
Here we prove that there exists a $ \zeta > 0 $ such that
\begin{equation} \label{e:aim-int1}
\begin{aligned}
    \EE_{\tau_{i}} \left[ \left(\min_{x \in \TT^{d}} w ( \tau_{i+1}-, x)
\right)^{- \eta} 1_{\{\tau_{i+1}= \tau_i + \mf{t}\} } \right] \leqslant
\widetilde{C}_{1}(\eta_{0}) e^{- \zeta \mf{t}} \left(\min_{x \in \TT^{d}} w ( \tau_{i}, x)
\right)^{- \eta} \;.
\end{aligned}
\end{equation}
Indeed, since
\begin{equation*}
\begin{aligned}
\EE_{\tau_{i}} \left[ \left( \min_{x \in \TT^{d}}  w (\tau_{i+1}-, x) 
\right)^{- \eta } 1_{\{ \tau_{i +1} = \tau_{i} + \mf{t} \}} \right] \leqslant 
\EE_{\tau_{i}} \left[ \left(  \min_{x \in \TT^{d}}  \Xi_{\tau_{i}, \tau_{i} +
\mf{t}}[ w (\tau_{i}, \cdot)] (x)
\right)^{- \eta } \right] \;,
\end{aligned}
\end{equation*}
we can apply Proposition~\ref{prop:lin} (which is the analogue
of Theorem~\ref{thm:moment-estimate} for linear equations).
Note that the
proposition is stated for $
\Phi $, but applies analogously to the flow $ \Xi $ defined in \eqref{e:xi},
since the latter still has a positive Lyapunov exponent.
Further observe that we can apply Proposition~\ref{prop:lin} provided we choose
$ \mf{t} > 1 $, to obtain 
\begin{equation*}
\begin{aligned}
\EE_{\tau_{i}} \left[ \left(  \min_{x \in \TT^{d}}  w (\tau_{i+1}-, x)
\right)^{- \eta } 1_{\{ \tau_{i +1} = \tau_{i} + \mf{t} \}} \right] & \leqslant
C(\eta_{0}) e^{- \zeta(\gamma) \mf{t}} \left( \int_{\TT^{d}}
 w(\tau_{i}, x)  \ud x \right)^{- \eta} \\
& \leqslant C(\eta_{0}) e^{- \zeta(\gamma) \mf{t}} \left( \min_{x \in
\TT^{d}}  w(\tau_{i}, x)  \right)^{- \eta}\;,
\end{aligned}
\end{equation*}
where the constant $ C(\eta_{0})  $ appearing on the right-hand side is independent of $
\ve $. Hence, \eqref{e:aim-int1} holds with $
\widetilde{C}_{1}(\eta_{0})  = C (\eta_{0})$.

\textit{The event $ \tau_{i+1} < \tau_{i} + \mf{t} $.} Here we use
Cauchy--Schwarz to bound
\begin{equation*}
\begin{aligned}
\EE_{\tau_{i}} & \left[  \left(  \min_{x \in \TT^{d}} w (\tau_{i+1}-, x) 
\right)^{- \eta } 1_{\{ \tau_{i +1} < \tau_{i} + \mf{t} \}}  \right] \\
& \leqslant \EE_{\tau_{i}} \left[  \left( \min_{x \in \TT^{d}}  w (\tau_{i+1}-, x) 
\right)^{- 2 \eta } 1_{\{ \tau_{i +1} < \tau_{i} + \mf{t} \}} 
\right]^{\frac{1}{2}} \PP_{\tau_{i}}(\tau_{i +1} < \tau_{i} +
\mf{t})^{\frac{1}{2}} .
\end{aligned}
\end{equation*}
Then we will prove the following two facts. First, we will show that we can
choose $ \ve( \mf{t}) \in (0, \ve_{0}) $ sufficiently small such that 
\begin{equation}\label{e:prf-prop-2}
\begin{aligned}
\PP_{\tau_{i}} (\tau_{i +1} < \tau_{i} + \mf{t}) \leqslant e^{- 2 \zeta
\mf{t}} \;.
\end{aligned}
\end{equation}
Second, we show that there exists a constant $ C> 0 $ such that
\begin{equation}\label{e:prf-prop-3}
\begin{aligned}
\EE_{\tau_{i}}\left[  \left(  \min_{x \in \TT^{d}} w (\tau_{i+1}-, x)
\right)^{- 2 \eta } 1_{\{ \tau_{i +1} < \tau_{i} + \mf{t} \}} 
\right]^{\frac{1}{2}} \leqslant C  \left( \min_{x \in \TT^{d}} w (\tau_{i},
x) \right)^{- \eta}  \;.
\end{aligned}
\end{equation}
As for \eqref{e:prf-prop-3}, it suffices to show that for any $ \eta > 0 $
\begin{equation}\label{e:prf-prop-4}
\begin{aligned}
\EE_{\tau_{i}} \left[ \left(  \min_{x \in \TT^{d}}  \Xi_{\tau_{i}, \tau_{i+1}}
[w] (x)  \right)^{- \eta } 1_{\{ \tau_{i+1} < \tau_{i} + \mf{t} \}}
\right] \leqslant C (\eta) \;, \qquad \forall w  \; \colon \; \min_{x
\in \TT^{d}} w(x) \geqslant 1 \;.
\end{aligned}
\end{equation}
Since $ \Xi $ is the flow to a linear equation, we can represent it through an
integral kernel: 
\begin{equation*}
\begin{aligned}
\Xi_{s, t} [w] = \int_{\TT^{d}} K_{s, t} (x, y) w (y) \ud y \;.
\end{aligned}
\end{equation*}
Then if $ \min_{x \in \TT^{d}} w (x) \geqslant 1 $ we can write
\begin{equation*}
\begin{aligned}
\min_{x \in \TT^{d}} \Xi_{\tau_{i}, \tau_{i} + t}
[w] (x) & = \min_{x \in \TT^{d}} \int_{\TT^d} K_{\tau_{i}, \tau_{i} + t} (x, y) w (y) \ud y \\
&\geqslant \min_{x \in \TT^{d}} \int_{\TT^d} K_{\tau_{i}, \tau_{i} + t} (x, y) \ud y = c_{K}
(\tau_{i}, \tau_{i} + t) > 0 \;,
\end{aligned}
\end{equation*}
where we have defined
\begin{equation*}
\begin{aligned}
c_{K} (s, t) = \min_{x \in \TT^{d}}\int_{\TT^{d}} K_{s , t} (x, y)  \ud y \;.
\end{aligned}
\end{equation*}
Then by Lemma~\ref{lem:c-bd} (once more, the lemma is stated for $ \Phi $ but
holds analogously for $ \Xi $) we know that for $ \eta_{0} $ sufficiently small
and all $ \eta \leqslant \eta_{0} $
\begin{equation*}
\begin{aligned}
\EE_{\tau_{i}} \left[ \sup_{0 \leqslant t \leqslant 1 }
c_{K}(\tau_{i}, \tau_{i} +t)^{- 2 \eta}\right] < C(\eta) \;.
\end{aligned}
\end{equation*}
Therefore it makes sense to further decompose the current case $
\tau_{i +1} < \tau_{i} + \mf{t} $ into the two different cases $
\tau_{i+1} < \tau_{i} + 1 $ and  $ \tau_{i+1} \in
[\tau_{i}+1 , \tau_{i} + \mf{t}) $.
In the first case we find for any $ w $ with $ \min_{x \in \TT^{d}} w(x)
\geqslant 1 $
\begin{equation*}
\begin{aligned}
\EE_{\tau_{i}} \left[ \left( \min_{x \in \TT^{d}}  \Xi_{\tau_{i}, \tau_{i+1}}
[w] (x) \ud x \right)^{- 2\eta } 1_{\{ \tau_{i+1} < \tau_{i} + 1\}}\right] \leqslant
\EE_{\tau_{i}} \left[  \sup_{0 \leqslant t \leqslant 1} c_{K}(\tau_{i},
\tau_{i}+t)^{- 2 \eta}\right] \leqslant C( \eta) \;.
\end{aligned}
\end{equation*}
To conclude the proof of \eqref{e:prf-prop-4} we have now reduced ourselves to
finding an upper bound  for any $ w $ with $ \min_{x \in \TT^{d}} w
(x) \geqslant 1 $:
\begin{equation*}
\begin{aligned}
\EE_{\tau_{i}} \left[ \left(  \min_{x \in \TT^{d}}  \Xi_{\tau_{i}, \tau_{i+1}}
[w] (x) \ud x \right)^{- 2\eta } 1_{ [ \tau_{i} + 1,
\tau_{i} + \mf{t})} (\tau_{i+1}) \right] \;.
\end{aligned}
\end{equation*}
In this case we can use the second statement of Proposition~\ref{prop:lin}, to
obtain for some constant $C(\eta_{0}) > 0 $ (which is uniform over $ \mf{t} $ and
$ \ve $):
\begin{equation*}
\begin{aligned}
\EE_{\tau_{i}} \left[ \left(  \min_{x \in \TT^{d}}  \Xi_{\tau_{i}, \tau_{i+1}}
[w] (x) \ud x \right)^{- 2\eta } 1_{ [ \tau_{i} + 1,
\tau_{i} + \mf{t})} (\tau_{i+1}) \right] \leqslant C(\eta_{0}) \left(
\int_{\TT^{d}} w(x) \ud x \right)^{- \eta} \leqslant C (\eta_{0}) \;,
\end{aligned}
\end{equation*}
since $ \min_{x \in \TT^{d}} w (x) \geqslant 1 $.
Therefore, the proof is complete if we show that \eqref{e:prf-prop-2} holds
true. This in turn is a simple consequence of the upper bound from
Lemma~\ref{lem:ub} (with $ \| w \|_{\infty} \leqslant \ve $), by
choosing $ \ve (\mf{t}) $ sufficiently small.
\end{proof}
The next ingredient in the proof of Theorem~\ref{thm:moment-estimate} is the
following uniform estimate.

\begin{proposition}\label{prop:unif}
There exists an $ \eta_{0} > 0 $ such that the following bounds are uniform
over all $ \eta \in (0, \eta_{0}) $, all $ \mf{t}> 0 $ and all $ i \in \NN $,
for some constants $ C, c > 0 $:
\begin{equation*}
\begin{aligned}
\EE_{\tau_{i}} \left[ \sup_{\tau_{i} \leqslant t < \tau_{i+1}} \left( \min_{x \in \TT^{d}} w (
t, x) \right)^{- \eta  } \right] \leqslant e^{c \mf{t} } \left( \min_{x \in \TT^{d}} w (
\tau_{i}, x) \right)^{- \eta} \;.
\end{aligned}
\end{equation*}
\end{proposition}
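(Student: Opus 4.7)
The plan is to make the conditional law of the supremum explicit by using linearity of $\Xi$, and then to reduce to a one-step estimate on $c_{K}$. Recall that on $[\tau_{i}, \tau_{i+1})$ the process $w$ evolves deterministically (in the sense of the flow) from $w(\tau_{i}, \cdot) = \mT w(\tau_{i}-, \cdot) \geqslant 0$ via the linear flow $\Xi$, and $\tau_{i+1} - \tau_{i} \leqslant \mf{t}$ by construction. Representing $\Xi_{\tau_{i}, t}$ through its non-negative kernel $K_{\tau_{i}, t}(x, y)$, linearity immediately yields
\begin{equs}
\min_{x \in \TT^{d}} w(t, x) \geqslant \Big( \min_{y \in \TT^{d}} w(\tau_{i}, y) \Big) \, c_{K}(\tau_{i}, t) \;, \qquad t \in [\tau_{i}, \tau_{i+1}) \;.
\end{equs}
Raising to the power $-\eta$ and taking the supremum over $t$ reduces the proposition to proving $\EE_{\tau_{i}}[\sup_{0 \leqslant s \leqslant \mf{t}} c_{K}(\tau_{i}, \tau_{i}+s)^{-\eta}] \leqslant e^{c \mf{t}}$, uniformly in $\eta \in (0, \eta_{0})$.

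To handle a supremum over an interval of arbitrary length $\mf{t}$, I would split it into unit blocks using the semigroup property of $K$: from the Chapman--Kolmogorov identity $K_{\tau_{i}, \tau_{i}+s}(x, y) = \int K_{\tau_{i}+k, \tau_{i}+s}(x, z) K_{\tau_{i}, \tau_{i}+k}(z, y) \ud z$ one derives the submultiplicativity
\begin{equs}
c_{K}(\tau_{i}, \tau_{i}+s) \geqslant c_{K}(\tau_{i}+k, \tau_{i}+s) \prod_{j=0}^{k-1} c_{K}(\tau_{i}+j, \tau_{i}+j+1) \;, \qquad s \in [k, k+1] \;.
\end{equs}
Setting $Y_{j} = \sup_{0 \leqslant t \leqslant 1} c_{K}(\tau_{i}+j, \tau_{i}+j+t)^{-\eta}$, this yields the pathwise inequality $\sup_{0 \leqslant s \leqslant \mf{t}} c_{K}(\tau_{i}, \tau_{i}+s)^{-\eta} \leqslant \prod_{j=0}^{\lceil \mf{t} \rceil} Y_{j}$.

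Under $\EE_{\tau_{i}}$ and by the strong Markov property of the driving white noise, the $Y_{j}$ are independent, each with the law of $Y_{0} = \sup_{0 \leqslant t \leqslant 1} c_{K}(0, t)^{-\eta}$. To obtain a single bound valid for all $\eta \in (0, \eta_{0})$ I would use the elementary inequality $x^{-\eta} \leqslant 1 + x^{-\eta_{0}}$ for $x > 0$, which together with Lemma~\ref{lem:c-bd} yields $\EE[Y_{j}] \leqslant M(\eta_{0})$ independently of $\eta$. Hence
\begin{equs}
\EE_{\tau_{i}}\Big[ \prod_{j=0}^{\lceil \mf{t} \rceil} Y_{j}\Big] = \prod_{j=0}^{\lceil \mf{t} \rceil} \EE[Y_{j}] \leqslant M^{\lceil \mf{t} \rceil + 1} \leqslant M \cdot e^{(\log M)(\mf{t}+1)}\;,
\end{equs}
which is of the required form $e^{c \mf{t}}$ after absorbing the overall prefactor.

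The main obstacle is securing the $\eta$-uniformity of the constant $M$: Lemma~\ref{lem:c-bd} is stated for each individual $\eta$, so the monotonicity inequality $x^{-\eta} \leqslant 1 + x^{-\eta_{0}}$ (or a direct inspection of the proof of that lemma) is the essential ingredient that allows us to compound the one-step bound over $\mf{t}$ many time intervals with a base independent of $\eta$. A minor additional check required is that each $Y_{j}$ is measurable with respect to the noise increments on $[\tau_{i}+j, \tau_{i}+j+1]$ and that time-stationarity of $W$ makes its law independent of both $j$ and $\tau_{i}$; both facts follow from the adaptedness and time-homogeneity of the linear flow $\Xi$.
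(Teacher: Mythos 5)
Your proof is correct and follows essentially the same route as the paper's: both decompose the supremum over $[\tau_{i}, \tau_{i+1})$ into unit blocks via the semigroup property of the kernel $K$ (the submultiplicativity of $c_{K}$ and the fact that $c_{K}(j,j)=1$ makes each block factor at least $1$), exploit independence of the successive noise increments, and invoke Lemma~\ref{lem:c-bd} for the single-block moment bound. The only cosmetic differences are that you factor out $\min_{y} w(\tau_{i}, y)$ explicitly by linearity rather than normalising it to $1$ without loss of generality, and you insert the elementary inequality $x^{-\eta} \leqslant 1 + x^{-\eta_{0}}$ to make the uniformity in $\eta$ explicit (harmless, since Lemma~\ref{lem:c-bd} already supplies a constant uniform over $\eta \in [0, \eta(T)]$).
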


\begin{proof}
Without loss of generality, we may assume that $ \tau_{i} = 0 $. Then observe
that for any $ 0 \leqslant s \leqslant t $
\begin{equs}
\min_{x \in \TT^{d}} \Xi_{0, t} [w] (x) \geqslant  \left( \min_{x \in \TT^{d}}
\int_{\TT^{d}} K_{s, t} (x, y) \ud y \right) \left( \min_{x \in \TT^{d}}
\Xi_{0, s}[w] (x) \right)  = c_{K} (s, t) \left( \min_{x \in \TT^{d}}
\Xi_{0, s}[w] (x) \right)\;,
\end{equs}
for all $ w \geqslant 0  $, and
where $ K_{s, t} $ is the integral kernel associated to the flow $
\Xi_{s, t} $ in \eqref{e:xi} and
\begin{equs}
c_{K}(s, t)=  \min_{x \in \TT^{d}} \int_{\TT^{d}} K_{s, t} (x, y) \ud y \;.
\end{equs}
Iterating this bound $ \lfloor \mf{t} \rfloor + 1 $ times we obtain that if in
addition $ \min_{x \in \TT^{d}} w (x) \geqslant 1 $ (which we can assume
without loss of generality):
\begin{equs}
\sup_{0 \leqslant t \leqslant \mf{t}}  \left( \min_{x \in \TT^{d}} w (
t, x) \right)^{- \eta  } \leqslant \prod_{i =0}^{\lfloor t \rfloor} \sup_{0
\leqslant t \leqslant 1 } c_{K} (i,i+ t)^{- \eta} \;,
\end{equs}
where we used that since $ c_{K}(i,i) = 1 $ each of the terms in the product is
greater than $ 1 $.
Furthermore, any two $ c_{K}(i, i+s), c_{K}(j, j+s) $ are independent for $
i \neq j $ and $ s \in [0, 1] $. Therefore we obtain
\begin{equs}
\EE \left[ \sup_{0 \leqslant t \leqslant \tau_{1}}  \left( \min_{x \in \TT^{d}} w (
t, x) \right)^{- \eta  } \right] \leqslant \prod_{i =1}^{\lfloor \mf{t}
\rfloor}\EE \left[ \sup_{0 \leqslant t \leqslant 1 } c_{K} (i,i+ t)^{- \eta}
\right] \leqslant e^{c \lfloor t \rfloor} \;,
\end{equs}
by Lemma~\ref{lem:c-bd}, for some $ c > 0 $, provided that $ \eta
\leqslant \eta_{0} $ for some suitably small $ \eta_{0} > 0 $. This proves the
result (recall that we we have assumed $ \min_{x \in \TT^{d}} w(x) \geqslant 1
$).

\end{proof}
Recalling that the flow $ \Xi $ solve \eqref{e:Phi} with
$\gamma=f'(0)$ we obtain the
following result for the linear flow $\Xi$.

\begin{proposition}\label{prop:lin} Let $ \Xi $ be the flow defined in
\eqref{e:xi}.
Under the assumption of Theorem~\ref{thm:moment-estimate} there exist deterministic constants $
\eta_{0}(\gamma ), \zeta(\gamma) > 0 $ such that
for all $ \eta \in (0, \eta_{0}) $, any stopping time $ \tau $, and any $ \mF_{\tau}
$-adapted $ w \in C(\TT^{d}) $ we have for some $ C(\eta_{0}) > 0 $
\begin{equation*}
\begin{aligned}
\EE_{\tau} \left[ \left( \min_{x \in \TT^{d}} \Xi_{\tau, \tau +
t}[w] (x)
 \right)^{- \eta } \right]^{\frac{1}{\eta}} \leqslant C(\eta_{0})  e^{- \zeta t }
\int_{\TT^{d}} w (x) \ud x  \;, \qquad \forall t > 1 \;.
\end{aligned}
\end{equation*}
\end{proposition}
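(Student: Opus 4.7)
The plan is to upgrade the $L^{1}$-based moment estimate of Theorem~\ref{thm:linearised-1} to the sharper pointwise minimum estimate required here, by exploiting the regularising effect of the linear flow over a unit time window. The key observation is that $\Xi_{s,t} = e^{-\frac{\lambda}{2}(t-s)} \Phi_{s,t}$ solves a linear SPDE of the same type as~\eqref{e:linearised-n} but with drift coefficient $\gamma - \lambda/2$, so its associated Lyapunov exponent equals $\lambda/2 > 0$. Hence Theorem~\ref{thm:linearised-1} applies verbatim to $\Xi$ in place of $\Phi$, yielding an $L^{1}$-version of the desired estimate with some exponential rate $\zeta > 0$ and for exponents below some threshold $\eta_{0}(\gamma) > 0$.

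For $t > 1$, I would split the evolution via the strong Markov property as $\Xi_{\tau, \tau+t}[w] = \Xi_{\tau+t-1, \tau+t}[\Xi_{\tau, \tau+t-1}[w]]$ and represent the last unit of time through its (random) integral kernel $K$. Writing $K_{\min} = \min_{x, y \in \TT^{d}} K_{\tau+t-1, \tau+t}(x,y)$ for the pointwise lower bound of the fundamental solution over this unit window, non-negativity of the intermediate profile $\Xi_{\tau, \tau+t-1}[w]$ immediately gives
\begin{equs}
\min_{x \in \TT^{d}} \Xi_{\tau, \tau+t}[w](x) \geqslant K_{\min} \int_{\TT^{d}} \Xi_{\tau, \tau+t-1}[w](y) \ud y \;.
\end{equs}
Raising to the $-\eta$ power, taking conditional expectation under $\EE_{\tau}$ and applying Cauchy--Schwarz then produces
\begin{equs}
\EE_{\tau} \left[ \Big( \min_{x \in \TT^{d}} \Xi_{\tau, \tau+t}[w](x) \Big)^{-\eta} \right]^{\frac{1}{\eta}} \leqslant \EE_{\tau}[K_{\min}^{-2\eta}]^{\frac{1}{2\eta}} \cdot \EE_{\tau} \left[ \Big( \int_{\TT^{d}} \Xi_{\tau, \tau+t-1}[w](y) \ud y \Big)^{-2\eta} \right]^{\frac{1}{2\eta}} \;.
\end{equs}
The second factor is bounded by the $\Xi$-analogue of Theorem~\ref{thm:linearised-1}, which gives $C e^{-\zeta(t-1)} \int w$, provided $2\eta < \eta_{0}(\gamma)$. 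The first factor is reduced, via the strong Markov property and the independence of the noise increments on $[\tau+t-1, \tau+t]$ from $\mF_{\tau}$, to an unconditional expectation over a deterministic-length interval, which is finite and uniformly bounded in $\tau$ and $t$ for small enough $\eta$, thanks to the quantitative heat kernel lower bounds from Section~\ref{sec:lbds} (cf.~\cite{PerkowskiVanZuijlen23HeatKernel}).

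Combining the two factors, absorbing the harmless factor $e^{\zeta}$ into the prefactor and defining $\eta_{0}(\gamma)$ as the minimum of the two admissible thresholds yields the claimed bound for all $t > 1$. The main obstacle is securing sufficiently strong negative moments $\EE[K_{\min}^{-p}]$, since the range of $p$ for which these are finite is the ultimate limitation on $\eta_{0}(\gamma)$. It is crucial here that the interval $[\tau+t-1, \tau+t]$ has deterministic length $1$ regardless of $\tau$ and $t$, so that the integrability exponent of the heat kernel lower bound does not deteriorate over time or across different stopping times; matching this integrability with half of the exponent threshold in the $L^{1}$-estimate for $\Xi$ (the factor of one half coming from the Cauchy--Schwarz split) then fixes $\eta_{0}(\gamma)$ and completes the argument.
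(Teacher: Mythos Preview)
Your proposal is correct and follows essentially the same strategy as the paper: isolate a unit-time window at the end, use the kernel lower bounds from Lemma~\ref{lem:c-bd} to pass from the pointwise minimum to the $L^{1}$ norm, and apply Theorem~\ref{thm:linearised-1} (for $\Xi$) on the remainder via Cauchy--Schwarz. The only cosmetic difference is that the paper routes the regularisation step through the projective process, bounding $\min_{x} \pi_{\tau,\tau+t}[w]$ by the ratio $\min K/\max K$ over the last $t_{0}$ units of time, whereas you bound $\min_{x}\Xi_{\tau,\tau+t}[w]$ directly by $K_{\min}\cdot\int \Xi_{\tau,\tau+t-1}[w]$; your decomposition is slightly more direct (it spares the upper kernel bound) but otherwise equivalent.
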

\begin{proof}
This result follows from Theorem~\ref{thm:linearised-1}.
Indeed, we have that for any $ \eta, t > 0 $
\begin{equs}
\EE_{\tau} & \left[ \left( \min_{x \in \TT^{d}} \Xi_{\tau, \tau+ t}[w]  (x)
\right)^{- \eta}  \right]\\
& \lesssim \EE_{\tau} \left[ \left(  \int \Xi_{\tau, \tau+ t} [w] ( x) \ud x \right)^{- 2 \eta}
\right]^{\frac{1}{2}} \EE_{\tau} \left[ \left( \min_{x \in \TT^{d}} \pi_{\tau,
\tau+ t} [w] ( x)
\right)^{- 2 \eta} \right]^{\frac{1}{2}} \;, \label{e:prf-1}
\end{equs}
where for $ s \leqslant t $ the projective flow $ \pi_{s,t} $ is defined as
\begin{equs}
\pi_{s, t}[w](x) =  \Xi_{s, t}[w](x) \left( \int_{\TT^{d}}
\Xi_{s, t}[w](x) \ud x \right)^{-1} \;.
\end{equs}
Now, if $ K_{s, t} $ is the kernel associated to $ \Xi_{s, t} $, so that $
\Xi_{s, t} [w] (x) = \smallint_{\TT^{d}}K_{s, t} (x, y) w (y) \ud y  $. Then
\begin{equs}
\min_{x \in \TT^{d}} \pi_{s, t} [w] (x) \geqslant  \left( \min_{x,y \in \TT^{d}}
K_{s, t} (x, y)\right) \left( \int_{\TT^{d}} w(y) \ud y \right) \left(
\int_{\TT^{d}} \Xi_{s, t}[w] (x) \ud x \right)^{-1} \;.
\end{equs}
In addition
\begin{equs}
 \left( \int_{\TT^{d}} \Xi_{s, t}[w] (x) \ud x \right)^{-1} \geqslant \left(
\max_{x, y \in \TT^{d} } K_{s, t} (x, y) \right)^{-1} \left(
\int_{\TT^{d}} w (y) \ud y \right)^{-1} \;,
\end{equs}
so that we obtain
\begin{equs}
\min_{x \in \TT^{d}} \pi_{s, t} [w] (x) \geqslant  \left( \min_{x,y \in \TT^{d}}
K_{s, t} (x, y)\right) \left(
\max_{x, y \in \TT^{d} } K_{s, t} (x, y) \right)^{-1} \;.
\end{equs}
Note that this lower bound is uniform over the initial condition. Therefore,
using that $ t> 1 $ and with $ t_{0} \in (0, 1) $ as in Lemma~\ref{lem:c-bd} we
find that with $ \sigma = \tau + t - t_{0} $
\begin{equs}
\EE_{\tau} \left[ \left( \min_{x \in \TT^{d}} \pi_{\tau,
\tau+ t} [w] ( x)
\right)^{- 2 \eta} \right] & = \EE_{\tau} \left[ \left( \min_{x \in \TT^{d}}
\pi_{\sigma,
\sigma+ t_{0}} [ \pi_{\tau, \sigma}[ w] ] ( x)
\right)^{- 2 \eta} \right] \\
& \leqslant \EE_{\tau} \left[ \left( \min_{x,y \in \TT^{d}}
K_{\sigma, \sigma + t_{0}} (x, y)\right)^{2 \eta} \left(
\max_{x, y \in \TT^{d} } K_{\sigma, \sigma + t_{0}} (x, y) \right)^{-2 \eta}
\right]\\
& \leqslant C (\eta_{0}) \;.
\end{equs}
Instead, for the first term in \eqref{e:prf-1} we have via
Theorem~\ref{thm:linearised-1} that (provided $ \eta_{0}(\gamma) $ and $
 \zeta (\gamma) $ are chosen appropriately small):
\begin{equs}
\EE_{\tau} \left[ \left(  \int_{\TT^{d}} \Xi_{\tau, \tau+ t} [w] ( x) \ud x \right)^{- 2
\eta} \right] \leqslant C e^{- 2 \eta \zeta t } \left( \int_{\TT^{d}} w (x)
\ud x \right)^{- 2\eta} \;.
\end{equs}
As usual, we remark that
Theorem~\ref{thm:linearised-1} applies to the flow $ \Phi $, but also to the
flow $ \Xi $, since the latter still admits a positive Lyapunov exponent.
This completes the proof of the desired result. 
\end{proof}

\subsection{The fully nonlinear case} \label{sec:fully-nonlinear}

The crucial difference between the linear $ \sigma $ case and the fully
nonlinear case, is that we can not use a maximum principle to compare the
solution to the linearised process to the original equation - at least not in
the simple way we did previously. Instead we must
consider a slightly more involved linearisation argument, which requires the
introduction of several auxiliary stopping times. Overall though, the strategy remains
unchanged.

We start by considering $ \Psi $ be the nonlinear flow associated to \eqref{e:main}, so that $
(t, x) \mapsto \Psi_{s, t}[u] (x) $ is the solution to \eqref{e:main} with
initial condition $ u \in C(\TT^{d}; [0, \infty)) $ at time $ s \geqslant 0 $. Then, let us introduce the
following stopping times, where $ \ve_{1}, \delta \in (0, 1), \varrho \in
(0, 1/2) $ and $ M > 1
$ are parameters that we will fix later on:
\begin{equs}[e:def-stnl]
\tau (s,u) & = \inf  \{ t > s  \; \colon \; \| \Psi_{s, t}[u] \|_{\infty}
\geqslant \ve_{1} \}\;,\\
\tau^{X} (s, u)& = \inf \left\{ t \geqslant s  \; \colon \; \| 
X_{s, t}[u] -  Y_{s, t} \|_{\infty} + (t-s)^{\varrho}\| \nabla ( X_{s, t}[u] -  Y_{s, t})
\|_{\infty} \geqslant \delta 
\right\} \;, \\
\tau^{Y} (s,u) & = \inf \left\{ t \geqslant s  \; \colon \; \| 
Y_{s, t} \|_{\mC^1} \geqslant M \right\} \;.
\end{equs}
Here the processes $ X_{s, \cdot}[u] $
and $ Y_{s, \cdot} $ are respectively the solutions to 
\begin{equs}[e:XY]
\ud  X_{s,t}[u] & = \Delta X_{s, t}[u] \ud t +  \frac{f(\Psi_{s, t}[u])}{\Psi_{s,
t}[u]} \ud t + \frac{\sigma
(\Psi_{s, t}[u])}{\Psi_{s, t}[u]} \ud W_{t} \;, \qquad & & X_{s,s} =0 \;,\\
\ud Y_{s, t} & = \Delta Y_{s, t}  \ud t +  f^{\prime} (0) \ud t  + 
\ud W_{t}\;, \qquad & & Y_{s,s} = 0 \;.
\end{equs}
Recall that by Assumption~\ref{assu:nonlinearities}, $ \sigma^{\prime}
(0) =1 $.
Furthermore, let us observe that we have introduced a parameter $ \varrho $, which describes a
potential blow-up of the difference $ X_{s, y}[u] - Y_{s, t} $ at time $ t =s $. This is a technical
necessity which is the consequence of the fact that we do not assume that $
u $ is smooth, but only ask that it is bounded in $ L^{\infty}
$. Therefore, we require some degree of regularisation from the heat
semigroup to obtain smoothness of $ X[u] $, see Lemma~\ref{uy}.
The parameter $ \varrho $ can be chosen arbitrarily small.

Finally, let $ \tau^{\mathrm{tot}}(s, u) $ be the smallest of all the
stopping times in \eqref{e:def-stnl}, up to a deterministic threshold $ s + \mf{t} $, where $
\mf{t} >0$ is an additional parameter that will be fixed later on:
\begin{equs}[e:df-ttot]
\tau^{\mathrm{tot}}(s, u) =  \tau(s, u) \wedge \tau^{X} (s, u) \wedge
\tau^{Y}(s, u) \wedge(s+ \mf{t}) \;.
\end{equs}
The motivation for this definition is that we can rewrite the flows
associated to the nonlinear equation \eqref{e:main} and to the linearised
equation \eqref{e:Phi} as follows:
\begin{equs}\label{phi:psi}
\Psi_{s, t} [u] = e^{X_{s, t}[u]} \overline{\Psi}_{s, t} [u] \;, \qquad
\Phi_{s, t} [w] = e^{Y_{s, t}} \overline{\Phi}_{s, t} [w] \;,
\end{equs}
where $ \overline{\Psi} $  solves the following equation with $
\kappa_{\mathrm{tr}} (x) = \kappa (x, x) $, with $ \kappa $ the correlation
function in Assumption~\ref{assu:nonlinearities}:
\begin{equs}[e:psibar]
(\partial_{t} - \Delta) \overline{\Psi}_{s, t} [u] & =2\nabla \overline{\Psi}_{s,t}[u] \cdot \nabla X_{s,t}[u]+
\overline{\Psi}_{s, t} [u] |\nabla X_{s,t}[u] |^{2}-\frac{1}{2} \kappa_{\tr}
\frac{\sigma(\Psi_{s,t}[u])^2}{\Psi^2_{s,t}[u]} \overline{\Psi}_{s,t}[u]\;, \\
\overline{\Psi}_{s, s} [u] & = u \;,
\end{equs}
and similarly $ \overline{\Phi} $ solves the equation:
\begin{equs}[e:phibar]
(\partial_{t} - \Delta) \overline{\Phi}_{s, t} & = 2 \nabla \overline{\Phi}_{s,t} \cdot \nabla Y_{s,t} + \overline{\Phi}_{s, t} |
\nabla Y_{s, t} |^{2} - \frac{1}{2} \kappa_{\tr}
\overline{\Phi}_{s,t} \;, \qquad  & & \overline{\Phi}_{s,
s}[u] = u \;.
\end{equs}
Here the terms involving $\kappa_{\text{tr}}$ 
arise from the quadratic variation terms in It\^o's formula, see the derivation
of \eqref{e:v2}.
In the present setting, we can lower bound the evolution of the non-linear
equation through that of the linear flow $ \Phi $, as follows.

\begin{lemma}\label{lem:comp-phi-psi}
There exists a (deterministic) constant $ C > 1 $, and for every $
\mf{t} > 1 $ there exists a constant $ \overline{\ve}_{1}(\mf{t}) \in
(0, 1) $, such that for all parameters $
\delta \in (0, 1), M >1  $ satisfying
\begin{equs}[e:dtm]
\delta e^{C \mf{t} M^{2}} \leqslant 1 \;, \qquad \ve_{1} \in (0,
\overline{\ve}_{1} (\mf{t})) \;,
\end{equs}
the following holds.
For all $ 0 \leqslant s  < \infty $ and  $ u \in C
(\TT^{d}; [0, \infty)) $:
\begin{equs}
\Psi_{s, t} [u] \geqslant \frac{1}{2} \Phi_{s, t} [u] \;, \qquad \forall s
\leqslant t < \tau^{\mathrm{tot}}(s, u) \;.
\end{equs}
\end{lemma}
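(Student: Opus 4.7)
My plan is to reduce the pointwise comparison $\Psi \geqslant \tfrac{1}{2}\Phi$ to a quantitative closeness of $\overline{\Psi}$ and $\overline{\Phi}$ via the multiplicative decomposition \eqref{phi:psi}. Since on $[s, \tau^{\mathrm{tot}})$ one has $\|X_{s,t}[u] - Y_{s,t}\|_{\infty} \leqslant \delta$, the exponential factor satisfies $e^{X - Y} \geqslant e^{-\delta}$. Writing $\Psi / \Phi = e^{X - Y}\, \overline{\Psi}/\overline{\Phi}$, it suffices to show $\overline{\Psi} \geqslant \tfrac{3}{4}\overline{\Phi}$ pointwise and to take $\delta$ small. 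Since both $\overline{\Psi}$ and $\overline{\Phi}$ solve the random linear parabolic equations \eqref{e:psibar} and \eqref{e:phibar} with the same initial datum $u$, this becomes a question of continuous dependence on the coefficients, to be quantified through the stopping-time data.

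To make this precise, I would set $d = \overline{\Psi} - \overline{\Phi}$. Subtracting \eqref{e:phibar} from \eqref{e:psibar}, one finds $d(s,\cdot) = 0$ and $d$ satisfies
\begin{equs}
(\partial_{t} - \Delta) d - 2 \nabla X \cdot \nabla d - \Bigl(|\nabla X|^{2} - \tfrac{1}{2} \kappa_{\tr} \tfrac{\sigma(\Psi)^{2}}{\Psi^{2}}\Bigr) d = E
\end{equs}
with forcing
\begin{equs}
E = 2 \nabla(X - Y)\cdot \nabla \overline{\Phi} + (|\nabla X|^{2} - |\nabla Y|^{2}) \overline{\Phi} - \tfrac{1}{2} \kappa_{\tr} \Bigl(\tfrac{\sigma(\Psi)^{2}}{\Psi^{2}} - 1\Bigr) \overline{\Phi} \;.
\end{equs}
Using the bounds valid on $[s, \tau^{\mathrm{tot}})$ — namely $\|\nabla Y\|_{\infty} \leqslant M$, $\|\nabla (X - Y)\|_{\infty} \leqslant \delta(t - s)^{-\vr}$, $\|\Psi\|_{\infty} \leqslant \ve_{1}$, together with the Taylor expansion $|\sigma(\Psi)/\Psi - 1| \lesssim \ve_{1}$ coming from $\sigma(0) = 0$, $\sigma^{\prime}(0) = 1$ — I would estimate
\begin{equs}
\|E(t)\|_{\infty} \lesssim \delta (t - s)^{-\vr}\bigl(\|\nabla \overline{\Phi}(t)\|_{\infty} + M \|\overline{\Phi}(t)\|_{\infty}\bigr) + \ve_{1}\|\overline{\Phi}(t)\|_{\infty}\;.
\end{equs}

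To close the estimate I would apply a Duhamel / parabolic maximum principle argument to the linear equation for $d$. The zeroth-order coefficient is bounded above by $|\nabla X|^{2} + C \lesssim M^{2} + \delta^{2}(t-s)^{-2\vr} + C$, whose time integral is finite for $\vr < 1/2$, so Gronwall delivers $\|d(t)\|_{\infty} \leqslant e^{C M^{2} (t - s)} \int_{s}^{t} \|E(r)\|_{\infty} \, dr$. Combined with the heat-kernel smoothing $\|\nabla \overline{\Phi}(r)\|_{\infty} \lesssim (r - s)^{-1/2} \|u\|_{\infty}$ on the relevant interval, this yields $\|d(t)\|_{\infty} \lesssim e^{C M^{2} \mf{t}} \bigl(\delta \mf{t}^{1/2 - \vr} + \ve_{1} \mf{t}\bigr) \|u\|_{\infty}$. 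Choosing $\ve_{1} = \overline{\ve}_{1}(\mf{t})$ small enough, and invoking the standing hypothesis $\delta e^{C M^{2} \mf{t}} \leqslant 1$, this reduces the right-hand side to an arbitrarily small multiple of $\|u\|_{\infty}$. Converting this uniform bound into a pointwise ratio bound against $\overline{\Phi}$ is then achieved using the lower bounds on fundamental solutions established in Section~\ref{sec:lbds}.

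The main obstacle is precisely this last conversion: passing from a uniform-norm bound on $d$ to the pointwise inequality $d \geqslant -\tfrac{1}{4} \overline{\Phi}$. For initial data $u$ that are merely bounded and possibly irregular, $\overline{\Phi}$ can be extremely small at points where $u$ was small, so one cannot naively compare $\|d\|_{\infty}$ with $\inf_{x} \overline{\Phi}$; instead one needs the quantitative heat-kernel lower bounds of Section~\ref{sec:lbds}, which are tailored to precisely this situation. The introduction of $\vr \in (0, 1/2)$ is the technical price for accommodating non-smooth initial data: the bound on $\|\nabla(X - Y)\|_{\infty}$ degenerates as $(t-s)^{-\vr}$ near $s$, and $\vr < 1/2$ is what keeps the Gronwall integral convergent. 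The precise shape $\delta e^{C M^{2} \mf{t}} \leqslant 1$ of the parameter condition mirrors the Gronwall growth driven by $|\nabla Y|^{2} \leqslant M^{2}$ and the perturbation size $\delta$ in the coefficients.
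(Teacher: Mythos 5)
Your preliminary reduction matches the paper: the multiplicative decomposition $\Psi = e^{X}\overline{\Psi}$, $\Phi = e^{Y}\overline{\Phi}$, the bound $e^{X-Y} \geqslant e^{-\delta}$ on the stopping window, and the reduction to showing $\overline{\Psi} \geqslant \tfrac{3}{4}\overline{\Phi}$ are all correct and are exactly the first steps in the paper. The divergence, and the genuine gap, comes precisely at the point you flag as ``the main obstacle.'' Passing from the PDE for $d = \overline{\Psi} - \overline{\Phi}$ to a Gronwall bound $\|d(t)\|_{\infty} \lesssim e^{C M^{2}\mf{t}}(\delta \mf{t}^{1/2-\varrho} + \ve_{1}\mf{t})\|u\|_{\infty}$ throws away the spatial structure you need: for a nonnegative $u$ that is, say, concentrated near a single point $y_{0}$, at times $t$ close to $s$ and points $x$ far from $y_{0}$ the quantity $\overline{\Phi}_{s,t}[u](x)$ is of order $p_{c(t-s)}(x-y_{0})\|u\|_{L^{1}}$, which can be arbitrarily small compared to $\|u\|_{\infty}$. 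Thus a bound of the form $\|d\|_{\infty} \leqslant \text{small}\cdot\|u\|_{\infty}$ simply cannot be upgraded to the pointwise inequality $d(t,x) \geqslant -\tfrac{1}{4}\overline{\Phi}(t,x)$, and invoking the heat-kernel lower bounds of Section~\ref{sec:lbds} does not rescue the step: those lower bounds control the kernel $\Gamma^{\Phi}_{s,t}(x,y)$ by $c\,p_{c(t-s)}(x-y)$, which integrated against a localized $u$ is still exponentially small away from the support of $u$.

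The paper's proof circumvents this issue by never estimating $\overline{\Psi}-\overline{\Phi}$ at the level of solutions. It proves the \emph{kernel-level} comparison $\Gamma^{\Psi}_{s,t}(x,y) \geqslant c\,\Gamma^{\Phi}_{s,t}(x,y)$ for all $x,y$, after which the pointwise inequality $\overline{\Psi}_{s,t}[u] \geqslant c\,\overline{\Phi}_{s,t}[u]$ follows by integration against $u \geqslant 0$ with no reference to $\|u\|_{\infty}$ and with no infimum of $\overline{\Phi}$ entering. This is done in two stages: first the zeroth-order coefficients are peeled off by comparison, reducing to $\widetilde{\Gamma}^{\Psi}$ (same drift $\nabla X$, but common potential $|\nabla Y|^{2} - \kappa_{\tr}/2$); second, $\widetilde{\Gamma}^{\Psi}$ is compared to $\Gamma^{\Phi}$ by a parametrix expansion $\widetilde{\Gamma}^{\Psi} = \Gamma^{\Phi} + \sum_{k\geqslant 1}\Gamma^{\Phi}\star\Xi^{(k)}$ with $b = 2\nabla(X-Y)$, and the summands are bounded ratio-wise against $\Gamma^{\Phi}$ using Lemma~\ref{lem:pvz}, yielding $|\Xi^{(k)}_{y}| \lesssim (C\delta e^{C\mf{t}M^{2}})^{k}(k!)^{-1/2}\,s^{k(1/2-\varrho)-1}\,\Gamma^{\Phi}_{0,s}$, which is where the parameter hypothesis $\delta e^{C\mf{t}M^{2}} \leqslant 1$ enters. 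If you wanted to salvage your route, you would essentially have to redo the Duhamel representation of $d$ pointwise, with the integrand estimated against $\Gamma^{\Phi}(r,t;z,x)\cdot\overline{\Phi}(r,z)$ rather than in $L^{\infty}$; carried out honestly this becomes the parametrix argument.
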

\begin{proof}
We start by observing that since $ \| X_{s, t}[u] - Y_{s, t} \|_{\infty} \leqslant \delta  $ for $ t
< \tau^{\mathrm{tot}}(s, u) $, we immediately deduce that $ \exp (X_{s, t}[u])
\geqslant \exp (Y_{s, t} - \delta) $. Therefore, in view of~\eqref{phi:psi}, if we show that 
$ \overline{\Psi}_{s, t} [u] \geqslant \frac{3}{4}  \overline{\Phi}_{s, t} [u]
$ for $ s \leqslant t \leqslant \tau^{\mathrm{tot}}(s, u) $, we deduce the
desired result, provided that $ e^{- \delta} 3/4 > 1/2  $.
To prove this estimate, we rewrite $ \overline{\Psi} [u] $ through the
fundamental solution $ \Gamma^{\Psi} $ to \eqref{e:psibar} and similarly $
\overline{\Phi}[u] $ through the fundamental solution $ \Gamma^{\Phi} $ to
\eqref{e:phibar}. Namely, for any $ y \in \TT^{d} $ let $ \Gamma^{\Psi} $ solve
\begin{equs}[e:gamma-psi]
(\partial_{t} - \Delta) \Gamma^{\Psi}_{s, t} & =2\nabla \Gamma^{\Psi}_{s,t} \cdot \nabla X_{s,t}[u]+
\Gamma^{\Psi}_{s, t}  |\nabla X_{s,t}[u] |^{2}-\frac{1}{2} \kappa_{\tr}
\frac{\sigma(\Psi_{s,t}[u])^2}{\Psi^2_{s,t}[u]} \Gamma^{\Psi}_{s,t}\;, \\
\Gamma^{\Psi}_{s, s} (x, y)  & = \delta_{y} (x) \;,
\end{equs}
so that $ \overline{\Psi}_{s, t} [u] = \int_{\TT^d} \Gamma^{\Psi}_{s, t} (x, y)
u(y) \ud y $. Note that this notation can be slightly misleading, because $
\Gamma^{\Psi} $ depends itself on $ u $ and on $ \Psi [u] $. On the other hand, we
now use the fact that when $ u $ is  close to the origin, then the dynamics of $ \Gamma^{\Psi} $ are
close to those of $ \Gamma^{\Phi} $, where $\Gamma^{\Phi}$ solves for any $y\in\TT^d$ 
\begin{equs}[e:gamma-phi]
(\partial_{t} - \Delta) \Gamma^{\Phi}_{s, t} & =2\nabla \Gamma^{\Phi}_{s,t} \cdot \nabla Y_{s,t}+
\Gamma^{\Phi}_{s, t}  |\nabla Y_{s,t} |^{2}-\frac{1}{2} \kappa_{\tr}
\Gamma^{\Phi}_{s,t}\;, \\
\Gamma^{\Phi}_{s, s} (x, y)  & = \delta_{y} (x) \;.
\end{equs}
Indeed, we will prove that there exists a
constant $ c > 0 $ such that
\begin{equs}
\Gamma^{\Psi}_{s, t} (x, y) \geqslant c \Gamma^{\Phi}_{s, t} (x, y) \;, \qquad \forall
x, y \in \TT^{d} \;, \quad s \leqslant t < \tau^{\mathrm{tot}}
(s, u) \;.
\end{equs}
To obtain this bound, first of all, we consider the multiplicative terms in
\eqref{e:gamma-psi}, and then the drift terms. Since $\sigma\in C^1$ with
$\sigma'(0)=1$  we have that for $ t \leqslant
\tau^{\mathrm{tot}}(s, u) $ and any $ \ve^{\prime} \in (0, 1) $
\begin{equs}
\|| \nabla X_{s, t}[u] |^{2} - | \nabla Y_{s, t} |^{2} \|_{\infty} \leqslant 
 \delta (t -s)^{-\varrho} (2 M + \delta (t-s)^{- \varrho})
\;, \qquad \left\| 1 - \frac{\sigma^{2}(\Psi_{s, t}[u])}{\Psi^{2}_{s,
t}[u]} \right\|_{\infty} \leqslant \ve^{\prime}  \;,
\end{equs}
where the last inequality follows from the continuous differentiability of $ \sigma $, provided that
$ \ve_{1} =\ve_{1}( \ve^{\prime}) $ is chosen sufficiently small, depending on $ \ve^{\prime}  $.
Since \begin{align*}
(\partial_t -\Delta) \Gamma^{\Psi}_{s,t}&= 2\nabla
\Gamma^{\Psi}_{s,t}\cdot\nabla X_{s,t}[u] +\Gamma^{\Psi}_{s,t}|\nabla
Y_{s,t}|^2  +\Gamma^{\Psi}_{s,t} \left[ |\nabla X_{s,t}[u]|^2 -|\nabla
Y_{s,t}|^2 \right]\\&
-\frac{1}{2} \kappa_{\text{tr}} \left(
\frac{\sigma(\Psi_{s,t}[u])^2}{\Psi^2_{s,t}[u]}  -1  \right)\Gamma^{\Psi}_{s,t} -
\frac{1}{2} \kappa_{\text{tr}}\Gamma^{\Psi}_{s,t},
\end{align*}
we have by comparison that 
\begin{equs}
 \Gamma^{\Psi}_{s, t} & \geqslant \exp
\left( - (t -s)^{1-2 \varrho} \delta^{2} M - (t -s)^{1 - \varrho} 2\delta  M -(t-s) ( \|
\kappa_{\mathrm{tr}} \|_{\infty} \ve^{\prime}  /2 )\right)
\widetilde{\Gamma}^{\Psi}_{s, t} \\
& \geqslant \exp ( - 3 \mf{t}^{1 - \varrho}  \delta M -  \mf{t} \| \kappa
\|_{\infty} \ve^{\prime}/2   )\widetilde{\Gamma}^{\Psi}_{s, t} \;,
\end{equs}
where $ \widetilde{\Gamma}^{\Psi} $ is the
solution to
\begin{equs}\label{gamma:tilde}
(\partial_{t} - \Delta) \widetilde{\Gamma}^{\Psi}_{s, t} & =2\nabla
\widetilde{\Gamma}^{\Psi}_{s,t} \cdot \nabla X_{s,t}[u]+
\widetilde{\Gamma}^{\Psi}_{s, t} |\nabla Y_{s,t}|^{2}-\frac{1}{2} \kappa_{\tr}
\widetilde{\Gamma}^{\Psi}_{s,t}\;, \qquad \widetilde{\Gamma}^{\Psi}_{s, s} (x,
y)  & = \delta_{y} (x) \;.
\end{equs}
Note that Lemma~\ref{lem:pvz}
provides upper and lower bounds on the fundamental solution of the equation $(\partial_t -\Delta) \Gamma_{s,t}=2\nabla \Gamma_{s,t}\cdot \nabla Y_{s,t}$, where $Y$ solves the linear SPDE~\eqref{e:X}. These bounds can easily extended to the fundamental solutions $\Gamma^\Phi$ respectively $\Gamma^\Psi$
arising above.
Now, if we could replace in~\eqref{gamma:tilde} the drift $ \nabla X_{s, t}[u] $ with $ \nabla
Y_{s, t} $ we would have obtained the fundamental solution $ \Gamma^{\Phi} $
associated to $ \overline{\Phi} $. Comparing the fundamental solution $
\widetilde{\Gamma}^{\Psi} $ and $ \Gamma^{\Phi} $ is slightly more involved than the
treatment of the multiplicative term. Namely, we must apply the parametrix
method to obtain the required lower bound.

\emph{Parametrix applied to $ \widetilde{\Gamma}^{\Psi} $.} Let us write $
\mL $ for the time- and space-inhomogeneous operator $$ \mL =
\partial_{t} - \Delta - 2 (\nabla Y_{s,t}) \cdot \nabla - | Y_{s, t}
|^{2} + \kappa_{\mathrm{tr}}/2 \;.$$
Then $ \Gamma^{\Phi} $ is a solution to $ \mL \Gamma^{\Phi} = 0 $, while $
\widetilde{\Gamma}^{\Psi} $ is a solution to $ \mL
\widetilde{\Gamma}^{\Psi} = b \cdot \nabla \widetilde{\Gamma}^{\Psi} $, where
$  b_{s, t}  =  2(\nabla X_{s, t}[u] - \nabla Y_{s, t} )$ satisfies that $ \|
b_{s, t} \|_{\infty} \leqslant  2 \delta (t-s)^{- \varrho} $. This is
exactly the setting of the parametrix method, which aims to treat $ 
\widetilde{\Gamma}^{\Psi} $ as a perturbation (in $ \delta $ or for short times) of the equation
for $ \Gamma^{\Phi} $. For example, in first approximation, $ \widetilde{\Gamma}^{\Psi} $ can be
represented as follows (from now on let us fix $ s = 0 $ without loss of
generality and write $ \Gamma^\Phi_{t} = \Gamma^\Phi_{0, t} $ and similarly for all other
kernels):
\begin{equs}
\widetilde{\Gamma}^{\Psi}_{t} \simeq \Gamma^{(1)}_{t} =  \Gamma^\Phi_{t} + (\Gamma^\Phi \star b \cdot \nabla
\Gamma^\Phi )_{t}\;, \qquad \Gamma^\Phi \star \varphi (t, x)= \int_{0}^{t} \Gamma^\Phi_{r, t}
(x, y) \varphi (r, y) \ud r  \ud y \;.
\end{equs}
Now, $ \mL \Gamma^{(1)} = b \cdot \nabla \Gamma^{(1)} - b \cdot \nabla (\Gamma^\Phi \star b \cdot \nabla
\Gamma^\Phi ) $, so that formally $ \widetilde{\Gamma}^{\Psi} = \Gamma^{(1)} +
\mO(\delta^{2}) $. We can proceed with this expansion at will, and eventually,
formally, represent
\begin{equs}[e:parametrix]
\widetilde{\Gamma}^{\Psi}_{s, t}(x,y)  = \Gamma^\Phi_{s, t}(x,y) +
\sum_{k = 1}^{\infty} (\Gamma^\Phi \star \Xi^{(k)}_{y})_{t}(x)  \;,
\end{equs}
where 
\begin{equs}
\Xi^{(1)}_{y} (s,z)  & = b(s, z) \cdot \nabla_{z} \Gamma^\Phi_{0,s}(z, y) \;, \\
\Xi^{(k)}_{y} (s, z) & = b (s, z) \cdot \nabla_{z} (\Gamma^\Phi \star \Xi^{(k-1)
}_{y}) (s,z) \;, \qquad \forall k \geqslant 2 \;.
\end{equs}
To prove that \eqref{e:parametrix} is not only a formal identity, we must show
that the series converges. Here we use the heat kernel estimates in
Lemma~\ref{lem:pvz} which rely themselves on the parametrix method:
our aim is to obtain a good bound for the ratio
$| \Xi^{(k)}_{y} (s, z) | / | \Gamma^\Phi_{0, s} (z, y) |$. Since $\|\nabla
Y_{s,t}\|_\infty\leqslant M$ for $s\leqslant t \leqslant \tau^{\text{tot}}(s,u)\leqslant s+\mf{t}$,
we find by the second estimate in Lemma~\ref{lem:pvz} applied to $\Gamma^\Phi$
for some $ C , c >0 $ that are allowed to depend on $ \| \kappa \|_{\infty} $:
\begin{equs}
| \Xi^{(k)}_{y}(s, z) | \leqslant C \| b_{s} \|_{\infty} e^{\mf{t} C M^{2}} \int_{0}^{s}
\int_{\TT^{d} } (s-r)^{- \frac{1}{2}} p (c (s -r), z- v) | \Xi^{(k-1)}_{y} (r,
v) |
\ud v \ud r \;.
\end{equs}
If we apply this estimate to $ k =1 $, with the convention $
\Xi^{(0)}_{y} (s, z) = \delta_{y}(z) \delta_{0} (s) $, we obtain, since $ \| b_{s} \|_{\infty}
\leqslant 2 \delta s^{- \varrho} $, that
\begin{equs}
| \Xi^{(1)}_{y} (s, z)  | & \leqslant  C^{2}  \delta e^{ 2\mf{t}CM^{2}}
s^{- \varrho- \frac{1}{2}}
p (c s, z- y) \\
& \leqslant 2 C^{2} c^{-1} \delta e^{3\mf{t}CM^{2}} \, s^{-\frac{1}{2}
- \varrho} \, \Gamma^\Phi_{0, s}(z,y) \;,
\end{equs}
where the last line follows from the first bound in Lemma~\ref{lem:pvz}.
Iterating this bound, we obtain that, up to modifying the value of the constant
$ C >0 $
\begin{equs}
| \Xi^{(k)}_{y} (s, z) | \leqslant \frac{(C \delta e^{C \mf{t} M^{2}
})^{k}}{ \sqrt{k !}} s^{k (1/2 - \varrho) -1 } \Gamma^\Phi_{0 , s} (z, y) \;.
\end{equs}
For the derivation of this upper bound, we refer to the proof of
Lemma~\ref{ux}, where an analogous calculation is performed.
Hence the identity \eqref{e:parametrix} holds true and it immediately follows
that if $  \delta e^{C\mf{t} M^{2}}
\leqslant 1$ is sufficiently small (which we can rephrase as choosing the
constant $ C >1 $
sufficiently large), then we obtain $ \widetilde{\Gamma}^{\Psi}_{0, t} \geqslant \frac{4}{5}
\Gamma^\Phi_{0, t} $ for all $ t \leqslant \tau^{\mathrm{tot}}(0, u)
$. Hence for
such choice of parameters we have proven that
\begin{equs}
\Gamma^{\Psi}_{0, t} \geqslant \frac{4}{5} \exp \left(  - 3 \mf{t}^{1 - \varrho}  \delta M -  \mf{t} \| \kappa
\|_{\infty} \ve^{\prime}/2  \right) \Gamma^\Phi_{0, t}
\geqslant \frac{3}{4} \Gamma^\Phi_{0, t} \;.
\end{equs}
were the last inequality follows provided that we can choose $ \delta $
sufficiently small depending on $ M $, and $ \ve^{\prime} \in (0, 1) $
sufficiently small as well (leading to our choice of $ \overline{\ve}_{1} $).
The result is proven.
\end{proof}
Now, similarly to the case $\sigma(u)=u$, we have proven that while none of the above stopping times kick
in, we can control the process $ \Psi_{s, t} [u] $ through its linearisation at
zero. As soon as one of the stopping times kick in, we start over. This
motivates the following definition, compare with Definition~\ref{def:w-cut-off}.

\begin{definition}\label{def:flow:nl}
For any $s\geqslant 0$ and $ \mf{t} > 1 $ fix any parameters $ M > 1, \delta,
\ve_{1} \in (0, 1) $ satisfying \eqref{e:dtm} and any $ \ve \in (0, \ve_{1}) $.
Then let us define
\begin{equation*}
\begin{aligned}
\tau^{\mathrm{tot}} (s, u) = \tau(s, u) \wedge \tau^{X} (s, u) \wedge
\tau^{Y} (s, u) \wedge(s+ \mf{t}) \;.
\end{aligned}
\end{equation*}
Furthermore, set iteratively for all $ i \in \NN$:
\begin{equation*}
\begin{aligned}
\tau_{0} = 0 \;, \qquad \tau_{i+1} = \tau^{\mathrm{tot}} \left(\tau_{i},
\mT \u_{\tau_{i} -} \right) \;, \qquad \u_{t} = \Psi_{\tau_{i}, t}
[\mT \u_{\tau_{i}}] \;, \qquad \forall t \in [\tau_{i}, \tau_{i+1}) \;.
\end{aligned}
\end{equation*}
As before, recall~\eqref{e:def-cutoff}, the cut-off operator $\mathcal{T}$
is given by
\begin{equ}
\mT \varphi (x) = \min \{ \varphi (x), {\ve}  \} \;, \quad \forall
\varphi \in C(\TT^{d}; [0, \infty)) \quad  \text{ and for } \quad
{\ve} < \ve_{1}   \;.
\end{equ}
In particular, by comparison, the process $\u_{t} $ that we have
constructed satisfies the following: 
\begin{equation*}
\begin{aligned}
\| \u_{t} \|_{\infty} \leqslant \ve_{1} \;, \qquad \u_{t} \leqslant u_{t}\;, \qquad
\forall t \geqslant 0 \;.
\end{aligned}
\end{equation*}
Finally define
\begin{equs}
 w_{0} = \u_{0}\;, \qquad w_{t} = \Phi_{\tau_{i}, t}[ \u_{\tau_{i}} ] \;, \qquad \forall t \in
[\tau_{i}, \tau_{i+1})  \;.
\end{equs}
Then, since we have assumed the parameters to satisfy \eqref{e:dtm}, from Lemma \eqref{lem:comp-phi-psi} we obtain that $ u_{t} \geqslant \u_{t}
\geqslant \frac{1}{2} w_{t} $ for all $ 0\leqslant t  \leqslant \tau^{\mathrm{tot}}(0,u)
\leqslant \mf{t} $. 
\end{definition}
Let us observe that the exact value of the parameters will be chosen through
Corollary~\ref{cor:stop} and Proposition~\ref{prop:intermediate:nl}. In
particular, Proposition~\ref{prop:intermediate:nl} will determine the value of
$ \mf{t} >1 $, and Corollary~\ref{cor:stop} fixes all other parameters in such
a way that a number of rest terms and error estimates are suitably small.
We are now ready to prove Theorem~\ref{thm:moment-estimate} in full generality, building on 
results which are stated and proven in the upcoming sections.

\begin{proof}[of Theorem~\ref{thm:moment-estimate}]
First of all, it suffices to consider the process $(\u_t)_{t\geqslant 0}$
constructed in Definition~\ref{def:flow:nl} and show that, similarly to~\eqref{e:aim}
there exist $ \eta, \zeta, C_1, C_2> 0 $ such that
\begin{equation}\label{e:aim:nl}
\begin{aligned}
\EE \left[ \left(\min_{x \in \TT^{d}} \u (t, x)
\right)^{- \eta} \right] \leqslant C_{1} e^{- \zeta t} \left(\min_{x \in \TT^{d}} \u (0, x)
\right)^{- \eta} + C_{2} \;, \qquad \forall t \geqslant 0 \;.
\end{aligned}
\end{equation}
Namely, by Proposition~\ref{prop:intermediate:nl} and
Lemma~\ref{prop:unif:nl} we infer that for any
$\mf{t},M>1 $ sufficiently large, $\delta\in(0,1)$ sufficiently small satisfying \eqref{e:dtm} and $ \ve_{1}=\ve_1(\delta)$
as in Lemma~\ref{lem:comp-phi-psi}, there exist constants ${c}\in(0,1), {C}_2>0$ such that 
\begin{equation*}
\begin{aligned}
\EE_{\tau_{i}} \left[ \left(\min_{x \in \TT^{d}} \u ( \tau_{i+1}, x)
\right)^{- \eta} \right] \leqslant {c}
\left(\min_{x \in \TT^{d}} \u ( \tau_{i}, x) \right)^{- \eta}+
{C}_{2}\;.
\end{aligned}
\end{equation*}
Furthermore there exists a constant ${C}_3>0$ such that for
all $ i \in \NN $ we have
\begin{equation*}
\begin{aligned}
\EE_{\tau_{i}} \left[ \sup_{\tau_{i} \leqslant t < \tau_{i+1}} \left( \min_{x \in \TT^{d}} \u (
t, x) \right)^{- \eta  } \right] \leqslant {C}_3 \left( \min_{x \in \TT^{d}} \u (
\tau_{i}, x) \right)^{- \eta} \;.
\end{aligned}
\end{equation*}
Now let us apply Lemma~\ref{lem:discretisations}. Then, as in the proof of the
linear case $ \sigma(u) = u $, our result follows if we can show that for some $ t_{\star} > 0 $:
\begin{equs}
\sum_{i \in \NN} \PP (t_{\star} \in
[\tau_{i}, \tau_{i +1}))^{\frac{1}{2}} & < \infty \;, \label{e:cont-aim1-nl}\\
\sum_{i \in \NN} {C}_{3} {c}^{i} \PP (t_{\star} \in [\tau_{i},
\tau_{i+1}))^{\frac{1}{2}} & < 1 \;, \label{cont-aim_nl}
\end{equs}
for the stopping times $ \{\tau_i \}_{i\in \mathbb{N}}$ introduced in
Definition~\ref{def:flow:nl}. 
To this aim we must choose appropriately the time horizon $\mf{t}$ as well as
the parameters $M,\delta$ and $\varepsilon_1, \ve$ appearing in
the construction of the stopping times~\eqref{e:stop:nl}.

First of all, let us choose $ \eta = \eta_{0} $, were the latter is as in
Proposition~\ref{prop:intermediate:nl}. Next, let us fix a time horizon $ \mf{t} > 1$ such that
\begin{equs}
 C_{1} (\eta) e^{- \zeta \eta \mf{t}} \leqslant 1/2 \;,
\end{equs}
where $ C_{1} (\eta) $ is again as in Proposition~\ref{prop:intermediate:nl}.
Finally, fix $ M(\mf{t}), \delta (\mf{t}, M), \ve_{1} (\mf{t}, M, \delta), \ve
(\mf{t}, M, \delta, \ve_{1}) $ in function of $ \mf{t} $ once more as provided
in Proposition~\ref{prop:intermediate:nl}. Note that the choice of parameters
is the same as in Corollary~\ref{cor:stop}, so in particular \eqref{e:dtm} is
verified, and we are indeed in the setting of Definition~\ref{def:flow:nl}.
Before proceeding, we observe that by Proposition~\ref{prop:intermediate:nl},
the constant $ c > 0 $ appearing in \eqref{cont-aim_nl} is bounded from
above by $ C_{1}(\eta) e^{- \zeta \eta \mf{t}} $. In particular, by our choice
of $ \mf{t} $, we have that $ c \leqslant 1/2 $.

Now we are ready to verify~\eqref{cont-aim_nl}. To this aim we note that by
Lemma~\ref{prop:unif:nl} we obtain a constant ${C}_3>0$
which is bounded by ${C}_3\leqslant e^{{\alpha}\mf{t}} $ for some constant
${\alpha}>0$, uniformly in all the parameters fixed above.
Next, let us fix $ t_{\star} =
(n+1) \mf{t} $, for some $ n \in \NN $ to be fixed below. Then we have by the construction of the stopping times  in Definition~\eqref{def:flow:nl} that
$ \PP (t_{\star} \leqslant \tau_{n}) = 0 $ and hence
\begin{equs}
\sum_{i \in \NN } {C}_{3}
{c}^{i} \PP (t_{\star} \in [\tau_{i}, \tau_{i+1}))^{\frac{1}{2}}
& \leqslant \sum_{i \geqslant n}{C}_{3}{c}^{i}  \leqslant {C}_{3} 2^{-n+1}
\leqslant {K} e^{{\alpha} \mf{t}  - n b} \;,
\end{equs}
for some constant $ {K},b > 0 $. Now we choose $ n $ 
sufficiently large, depending on $ \zeta, \eta, \mf{t} $, such that $ {K}
e^{{\alpha} \mf{t}  - n b} < 1 $ as required in~\eqref{cont-aim_nl}. 

To conclude the proof, we only have to check \eqref{e:cont-aim1-nl}. This is a consequence of
the estimate
\[ \PP_{\tau_i}(\tau_{i+1} <\tau_i +\mf{t}  )\leqslant  e^{-\mf{t}}\;, \]
which in turn is a consequence of Corollary~\ref{cor:stop} (again, recall that
we have fixed all parameters but $ \mf{t} $ as prescribed in that corollary).
With the previous bound the summability of the series follows along the same
arguments of the proof of \cite[Lemma 5.2]{hairer2023spectral}. The proof is
complete.
\end{proof}

\subsection{The discrete Lyapunov property in the fully nonlinear case}

In this subsection, we collect a number of supporting result that are required in
the proof of Theorem~\ref{thm:moment-estimate} in the fully non-linear case.
Some of the proofs of these statements follow along the same lines of the proofs
in Section~\ref{sec:sup-lin}. When this is the case, we avoid repeating similar
arguments.

 We start by proving an upper bound for the nonlinear flow
$\Psi[u]$ for $u\in C(\TT^d;[0,\infty))$, up to the stopping time
$\tau^{\text{tot}}(s,u)$ appearing in Definition~\ref{def:flow:nl}. This bound
is similar to the one derived in Lemma~\ref{lem:ub} for the linear flow $\Phi$.

\begin{lemma}\label{u:phi}
For every $ \mf{t}, M >1 $, there exists a deterministic constant $C(M, \mf{t})>0$ such that
uniformly over every $u\in C(\TT^d;[0,\infty))$ the following estimate holds: 
   \begin{equation*}
   \sup\limits_{s\leqslant t \leqslant (s+\mf{t}) \wedge \tau^{X}(s, u) \wedge \tau^{Y}
(s, u)}\|\Psi_{s,t}[u]\|_\infty \leqslant C(M,
\mf{t}) \|u\|_\infty \;.
   \end{equation*}
\end{lemma}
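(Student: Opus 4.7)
The plan is to exploit the exponential decomposition $\Psi_{s,t}[u] = e^{X_{s,t}[u]} \overline{\Psi}_{s,t}[u]$ from~\eqref{phi:psi} and handle the two factors separately. Up to $\tau^{X}(s,u) \wedge \tau^{Y}(s,u)$, the definitions in~\eqref{e:def-stnl} immediately yield
\begin{equs}
\|X_{s,t}[u]\|_{\infty} \leqslant M + \delta\;, \qquad \|\nabla X_{s,t}[u]\|_{\infty} \leqslant M + \delta (t-s)^{-\varrho}\;,
\end{equs}
so the exponential factor is pointwise bounded by $e^{M+\delta}$ and it suffices to establish $\|\overline{\Psi}_{s,t}[u]\|_{\infty} \lesssim_{M, \mf{t}} \|u\|_{\infty}$ uniformly in $s, u$.

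For this I would use the PDE~\eqref{e:psibar}. The key observation is that the zeroth-order term $-\frac{1}{2} \kappa_{\tr} \sigma(\Psi)^{2}/\Psi^{2}\, \overline{\Psi}$ is non-positive, since $\kappa_{\tr}(x) = \kappa(x,x) \geqslant 0$ and $\sigma(u)^{2}/u^{2}$ extends continuously to $[0, \infty)$ by Assumption~\ref{assu:nonlinearities} (with value $1$ at $u=0$). Therefore $\overline{\Psi}_{s,t}[u]$ is a non-negative subsolution of the linear parabolic equation
\begin{equs}
(\partial_{t} - \Delta) v = 2 \nabla v \cdot \nabla X_{s,t}[u] + |\nabla X_{s,t}[u]|^{2} v\;, \qquad v_{s,s} = u\;.
\end{equs}
Setting $K(r) = \||\nabla X_{s,r}[u]|^{2}\|_{\infty}$ and $\psi(t) = \exp\left(\int_{s}^{t} K(r) \ud r\right)$, the rescaling $\varphi = \overline{\Psi}/\psi$ satisfies $(\partial_{t} - \Delta - 2 \nabla X \cdot \nabla) \varphi \leqslant 0$ pointwise in $(t,x)$. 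Since the drift-diffusion operator $\partial_{t} - \Delta - 2 \nabla X \cdot \nabla$ preserves the $L^{\infty}$ norm by the parabolic maximum principle, we obtain $\|\varphi(t, \cdot)\|_{\infty} \leqslant \|u\|_{\infty}$, and hence $\|\overline{\Psi}_{s,t}[u]\|_{\infty} \leqslant \psi(t)\|u\|_{\infty}$.

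To conclude I would use that, since $\varrho \in (0, 1/2)$, the singularity of $K(r) \leqslant 2 M^{2} + 2 \delta^{2}(r-s)^{-2\varrho}$ at $r = s$ is time-integrable, yielding
\begin{equs}
\int_{s}^{s + \mf{t}} K(r) \ud r \leqslant 2 M^{2} \mf{t} + \frac{2 \delta^{2} \mf{t}^{1-2\varrho}}{1-2\varrho} \;,
\end{equs}
and therefore the claimed bound with $C(M, \mf{t}) = \exp\!\left(M + \delta + 2 M^{2} \mf{t} + 2 \delta^{2} \mf{t}^{1-2\varrho}/(1-2\varrho)\right)$, uniform in $u$ and in the choice of starting time $s$.

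The main technical obstacle is that the drift $\nabla X_{s,t}[u]$ and the potential $|\nabla X_{s,t}[u]|^{2}$ are unbounded as $t \downarrow s$, whereas the classical parabolic maximum principle is usually stated for bounded coefficients. I would address this by restricting to $[s + \epsilon, t]$ (on which the coefficients are bounded and $\overline{\Psi}_{s, \cdot}[u]$ is smooth by parabolic regularisation), applying the maximum principle there with initial datum $\overline{\Psi}_{s, s + \epsilon}[u]$, and letting $\epsilon \downarrow 0$, exploiting that $\psi(s + \epsilon) \to 1$ by the integrability of $K$. Alternatively, one could represent $\overline{\Psi}$ through the fundamental solution $\Gamma^{\Psi}$ constructed via the parametrix method in the proof of Lemma~\ref{lem:comp-phi-psi}, and use heat-kernel upper bounds in the spirit of Lemma~\ref{lem:pvz}.
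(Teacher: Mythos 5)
Your proof is correct, but it takes a genuinely shorter and more elementary route than the paper's. Both start from the same decomposition $\Psi_{s,t}[u]=e^{X_{s,t}[u]}\overline{\Psi}_{s,t}[u]$ and bound $e^{X}$ by $e^{M+\delta}$ in the same way. The paper then proceeds by comparing the fundamental solution $\Gamma^{\Psi}$ of~\eqref{e:gamma-psi} to $\Gamma^{Y}$ (the analogous kernel with $\nabla Y$ in place of $\nabla X$) via a parametrix expansion, then to $\Gamma$ solving~\eqref{e:drift}, then to the kernel $K$ of the linear flow via~\eqref{e:lbkernel}, and finally invokes Lemma~\ref{lem:ub}, which is itself a maximum-principle bound for $\overline{\Phi}$. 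You instead apply a maximum principle \emph{directly} to $\overline{\Psi}$: you observe that the zeroth-order term $-\tfrac12\kappa_{\tr}\sigma(\Psi)^{2}/\Psi^{2}\,\overline{\Psi}$ has a favourable sign (since $\kappa_{\tr}\geqslant 0$ and $\overline{\Psi}\geqslant 0$), absorb the potential $|\nabla X|^{2}$ by the Gronwall-type rescaling $\varphi=\overline{\Psi}/\psi$ with $\psi'=\||\nabla X_{s,\cdot}|^{2}\|_{\infty}\psi$, and exploit that the resulting singularity $(r-s)^{-2\varrho}$ is time-integrable because $\varrho<1/2$. This bypasses the entire parametrix step and gives an explicit constant. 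The price is that you must handle the blow-up of the coefficients at $t=s$, and your proposed fix (work on $[s+\epsilon,t]$ and let $\epsilon\downarrow 0$, using the time-integrability of $K$ and continuity of the mild solution at the initial time) is correct; alternatively one could read the same conclusion off the mild formulation, in the spirit of the paper's own proof of Lemma~\ref{lem:ub}, which likewise quotes a maximum principle for an equation with merely bounded $\nabla Y$. The main thing the paper's longer route buys is reuse of the parametrix machinery that is needed anyway for Lemma~\ref{lem:comp-phi-psi} and Lemma~\ref{uy}; as a self-contained proof of this one lemma, yours is preferable.
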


\begin{proof}
 Using the definition of $\tau^{X}(s,u)$ and the decomposition $\Psi = e^X \overline{\Psi}$ from \eqref{phi:psi}, we obtain the following estimate
\begin{equation}\label{e:psi}
\|\Psi_{s,t}[u]\|_\infty \leqslant e^{\|X_{s,t}[u]\|_\infty } \|\overline{\Psi}_{s,t}[u]\|_{\infty} \leqslant e^{ \delta +\|Y_{s,t}\|_\infty }\|\overline{\Psi}_{s,t}[u]\|_\infty \leqslant e^{ \delta +M }\|\overline{\Psi}_{s,t}[u]\|_\infty \;,
\end{equation}  
for all $s\leqslant t\leqslant (s+\mf{t}) \wedge \tau^{X}(s, u) \wedge \tau^{Y}
(s, u)$. To simplify the notation, let us write $ \overline{\tau}$ for the stopping
time
\begin{equs}
\overline{\tau} = (s+\mf{t}) \wedge \tau^{X}(s, u) \wedge \tau^{Y} (s, u) \;.
\end{equs} 
In order to derive an upper bound for $\overline{\Psi}$ we  compare the
fundamental solution $\Gamma^\Psi$ of~\eqref{e:gamma-psi} with the fundamental
solution $\Gamma^Y$ of 
\begin{equs}
(\partial_{t} - \Delta) \Gamma^Y_{s, t} & =2\nabla \Gamma^Y_{s,t} \cdot \nabla Y_{s,t}+
\Gamma^Y_{s, t}  |\nabla Y_{s,t} |^{2}, \\
\Gamma_{s, s} (x, y)  & = \delta_{y} (x) \;.
\end{equs}
Observing that 
\begin{align*}
(\partial_t -\Delta) \Gamma^{\Psi}_{s,t}= & 2\nabla \Gamma^{\Psi}_{s,t}\cdot\nabla X_{s,t}[u] +\Gamma^{\Psi}_{s,t}|\nabla Y_{s,t}|^2  +\Gamma^{\Psi}_{s,t} [|\nabla X_{s,t}[u]|^2 -|\nabla Y_{s,t}|^2]\\& -\frac{1}{2} \kappa_{\text{tr}} \frac{\sigma(\Psi_{s,t}[u])^2}{\Psi^2_{s,t}[u]}  \Gamma^\Psi_{s,t} \;,
\end{align*}
we have
by Lemma~\ref{lem:pvz} for $s\leqslant t \leqslant \overline{\tau}$, dropping the trace term
because it is negative and since $ | \nabla X_{s, t} |^{2}- | \nabla Y
|^{2} \leqslant \delta (t-s)^{- \varrho}(M + \delta (t-s)^{- \varrho})  $ for
$ s \leqslant t \leqslant \overline{\tau} $, that
\[ \Gamma^\Psi_{s,t}(x,y) \leqslant \exp (2M\delta(t-s)^{1 - 2 \varrho} )\tilde{\Gamma}_{s,t}(x,y), \]
where $\tilde{\Gamma}$ is the fundamental solution 
of
\[ (\partial_t -\Delta)\tilde{\Gamma}_{s,t} = 2\nabla \tilde{\Gamma}_{s,t} \cdot \nabla X_{s,t}[u] + \tilde{\Gamma}_{s,t}|\nabla Y_{s,t}|^2. \]
By the parametrix method, similarly to Lemma~\ref{lem:comp-phi-psi}, one can
compare the two fundamental solutions $\tilde{\Gamma}$ and $\Gamma^Y$ and show
that there exists a constant $C(\mf{t}, M) >0$ such that $\tilde{\Gamma}_{s,t}(x,y)\leqslant C
\Gamma^Y_{s,t}(x,y)$ for all $x,y\in\TT^d$ and $s\leqslant t \leq
\overline{\tau} $. More precisely, one considers the operator
$\overline{\mathcal{L}}=\partial_t -\Delta - 2(\nabla Y_{s,t})\cdot \nabla
-|\nabla Y_{s,t}|^2$ and obtains that $\Gamma$ is a solution to
$\overline{\mathcal{L}}\Gamma^Y=0$, while $\tilde{\Gamma}$ is a solution to
$\overline{\mathcal{L}}\tilde{\Gamma}= b \cdot \nabla \tilde{\Gamma}$ for $b=2
(\nabla X_{s,t}[u] - \nabla Y_{s,t})$ which is the setting of the parametrix
method described in Lemma~\ref{lem:comp-phi-psi}.
For $s\leqslant t\leqslant \overline{\tau} $ 
\[ \Gamma^Y_{s,t}(x,y) \leqslant C(\mf{t}, M) \Gamma_{s,t}(x,y) \;, \qquad
\forall x,y\in\TT^d \;, \]
where $\Gamma$ is the fundamental solution of~\eqref{e:drift} and the
inequality holds up to potentially increasing the value of the constant $
C(M, \mf{t}) $. 
Moreover, by~\eqref{e:lbkernel} we can bound $\Gamma$ through the kernel $K$
associated to the linear flow $\Phi$ as follows
 \[  \Gamma_{s,t}(x,y) \leqslant K_{s,t}(x,y) e^{\|Y_{s,t}\|_\infty
+\frac{\|k_{\text{tr}}\|_\infty}  {2}(t-s)  }, \qquad \forall x,y\in\TT^d.
\]
Therefore, we have for $s\leqslant t \leqslant \overline{\tau}$ that overall, again up to
increasing the value of $ C(\mf{t}, M) $:
\[ \Gamma^{\Psi}_{s,t}(x,y) \leqslant C(\mf{t}, M) K_{s,t}(x,y) \;. \]
 For $u\in C(\TT^d;[0,\infty))$ we recall that $\overline{\Psi}_{s,t}[u]=\int_{\TT^d} \Gamma^{\Psi}_{s,t}(x,y)u(y)~\ud y$ and $\Phi_{s,t}[u]=\int_{\TT^d} K_{s,t}(x,y)u(y)~\ud y$.
Since $\Gamma^\Psi_{s,t}(x,y)>0~\forall x,y\in\TT^d$, the previous estimate further leads to 
 \[ \|\overline{\Psi}_{s,t}[u]\|_{\infty} \leqslant C(\mf{t}, M) \|\Phi_{s,t}[u]\|_\infty. \]
Next, from \eqref{e:psi} we deduce that for $s\leqslant t \leqslant
\overline{\tau} $ we have for $u\in C(\TT^d;[0,\infty))$ that
\[\|\Psi_{s,t}[u]\|_\infty \leqslant C(\mf{t}, M) \|\Phi_{s,t}[u]\|_\infty,  \]
and now we can use Lemma~\ref{lem:ub} to bound $\|\Phi_{s,t}[u]\|_\infty$ for
$u\in C(\TT^d;[0,\infty))$ through
\[ \sup\limits_{s\leqslant t \leqslant \overline{\tau} } \|\Phi_{s,t}[u]\|_\infty \leqslant \exp \Big(\|Y_{s,\cdot}\|_\infty + \mf{t}  \|\nabla Y_{s,\cdot} \|^2_\infty  \Big)\|u\|_\infty.   \]
In conclusion, up to once more increasing the value of the constant $ C > 0
$, we have proven that uniformly over $u\in C(\TT^d;[0,\infty))$ we can bound
\[\sup\limits_{s\leqslant t \leqslant \overline{\tau}}\|\Psi_{s,t}[u]\|_\infty \leq
C(\mf{t}, M) \|u\|_\infty.  \]
Hence the result is proven.
\end{proof}
The next set of results guarantee that the stopping times introduced in
\eqref{e:def-stnl} and used in Definition~\ref{def:flow:nl} do
not kick in too quickly.  
We start by proving that with high probability the stopping time
$\tau^Y(s,u)$ does not kick in before $\mf{t}$ . This immediately follows
choosing $M>1$ large enough.
\begin{lemma}\label{ux}
   For any $ \mf{t}> 1 $ there exists an $M(\mf{t})>1$  such that
    \[ \PP(\tau^Y(s,u)<s+\mf{t})\leqslant \frac{1}{2}  e^{-\mf{t}}. \]
\end{lemma}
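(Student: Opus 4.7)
The plan is to decompose $Y_{s,t}$ into its deterministic mean and a centred Gaussian component, bound the former trivially, and then apply Gaussian concentration to the latter. By stationarity of the noise in time we may assume $s=0$ throughout and write $Y_t = Y_{0,t}$.

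Writing the mild formulation of \eqref{e:XY}, we have
\begin{equs}
Y_t = f'(0) \int_0^t P_{t-r} \mathbf{1}\, \ud r + Z_t \;, \qquad Z_t = \int_0^t P_{t-r} \ud W_r\;.
\end{equs}
Since $P_r$ preserves constants, the first term equals $f'(0)\, t\, \mathbf{1}$, which has $\mC^1$-norm bounded by $|f'(0)|\mf{t}$ uniformly in $t \in [0,\mf{t}]$. Thus it suffices to control the supremum of $\|Z_t\|_{\mC^1}$.

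For the Gaussian part, under Assumption~\ref{assu:nonlinearities} the spatial correlation $\kappa \in \mC^1(\TT^d \times \TT^d)$, so in particular $\kappa$ is bounded together with its derivatives. Combined with the smoothing of the heat semigroup, a standard computation using the factorisation method (or a direct Kolmogorov-type argument) shows that the centred Gaussian process $(t,x) \mapsto Z_t(x)$ admits a version in $\mC([0,\mf{t}]; \mC^1(\TT^d))$ and that
\begin{equs}
\EE \Bigl[ \sup_{0 \leqslant t \leqslant \mf{t}} \| Z_t \|_{\mC^1}^2 \Bigr] \leqslant C(\mf{t}) < \infty \;,
\end{equs}
for a deterministic constant depending only on $\mf{t}$ and $\|\kappa\|_{\mC^1}$. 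The quantity $\sup_{0 \leqslant t \leqslant \mf{t}} \| Z_t \|_{\mC^1}$ is the supremum of a centred Gaussian family, hence by the Borell--TIS inequality its tail is sub-Gaussian: there exist constants $c_1(\mf{t}), c_2(\mf{t}) > 0$ such that
\begin{equs}
\PP \Bigl( \sup_{0 \leqslant t \leqslant \mf{t}} \| Z_t \|_{\mC^1} \geqslant R \Bigr) \leqslant c_1 \exp(-c_2 R^2) \;, \qquad \forall R \geqslant 0\;.
\end{equs}

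With these ingredients fixed, set
\begin{equs}
M(\mf{t}) = |f'(0)| \mf{t} + R(\mf{t}) \;,
\end{equs}
where $R(\mf{t})$ is chosen large enough so that $c_1 \exp(-c_2 R^2) \leqslant \tfrac{1}{2} e^{-\mf{t}}$. The triangle inequality gives
\begin{equs}
\sup_{0 \leqslant t \leqslant \mf{t}} \| Y_t \|_{\mC^1} \leqslant |f'(0)|\mf{t} + \sup_{0 \leqslant t \leqslant \mf{t}} \| Z_t \|_{\mC^1}\;,
\end{equs}
so that $\{\tau^Y(0,u) < \mf{t}\} \subseteq \{\sup_{0 \leqslant t \leqslant \mf{t}}\| Z_t \|_{\mC^1} \geqslant R(\mf{t})\}$, and the desired probability bound follows. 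The main (and only) non-trivial point is the Borell--TIS estimate, but this is classical given the regularity of $Y$ that we obtain from the $\mC^1$ assumption on $\kappa$.
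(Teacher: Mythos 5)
Your proof is correct but uses heavier machinery than the paper needs. The paper's argument is a one-liner: since Lemma~\ref{lem:X} already gives $\EE\big[\|Y\|^2_{L^\infty([0,\mf{t}];\mC^1(\TT^d))}\big]<\infty$, Markov's inequality alone yields $\PP(\sup_{t\leqslant\mf{t}}\|Y_{0,t}\|_{\mC^1}\geqslant M)\leqslant C(\mf{t})/M^2$, and one simply picks $M(\mf{t})$ large enough so this is below $\tfrac{1}{2}e^{-\mf{t}}$. Because the required tail level $\tfrac{1}{2}e^{-\mf{t}}$ is a fixed constant once $\mf{t}$ is fixed, no quantitative decay rate in $M$ is needed --- only that the tail vanishes as $M\to\infty$. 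Your decomposition of $Y$ into deterministic drift plus a centred Gaussian $Z$, followed by Borell--TIS, gives sub-Gaussian tails, which is stronger than necessary; the split is also not needed, since Markov can be applied directly to the full (non-centred) process $Y$. One small imprecision: $\sup_{t\leqslant\mf{t}}\|Z_t\|_{\mC^1}$ is not literally the supremum of a centred Gaussian family (the $\mC^1$ norm is a sum of suprema of absolute values), so the cleaner route to your sub-Gaussian tail is Fernique's theorem for Banach-space-valued Gaussians --- which is precisely what the paper's Lemma~\ref{lem:X} establishes for $Y$ itself, making the drift/Gaussian split moot.
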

\begin{proof}
The statement follows by choosing $M>1$ sufficiently large by Markov's
inequality, in view of the fact that $\EE\Big[
\|Y\|^2_{L^\infty([0,\mf{t}];\mathcal{C}^1(\TT^d))} \Big]<\infty$  as
established in Lemma~\ref{lem:X}. 
\end{proof}
An analogous statement holds for the stopping time $\tau^X(s,u)$ provided that
the parameters $\varepsilon_1,\delta\in(0,1)$ are chosen appropriately.

\begin{lemma}\label{uy}
Fix any $ \varrho \in (0, 1/4) $ and
consider the stopping times defined in \eqref{e:def-stnl} (with $
\tau^{X} $ depending on the parameter $ \varrho $).
For any $\mf{t}, M>1$ and $\delta \in (0,1)$, there exists an $\ve_1(\mf{t},M,
\delta) \in(0,1)$ such that for all $ u \in C(\TT^{d}; (0, \infty)) $
  \[
\PP \left( \tau^X(s,u)<(s+\mf{t})
\wedge \tau(s, u) \wedge \tau^{Y}(s, u) \right) \leqslant  \frac{1}{2}
e^{-\mf{t}}. \] 
\end{lemma}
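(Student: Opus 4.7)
The plan is to study the difference $Z_{s,t} := X_{s,t}[u] - Y_{s,t}$. Subtracting the equations in \eqref{e:XY}, it satisfies, in mild form,
\begin{equation*}
Z_{s,t} = \int_{s}^{t} P_{t-r}\, g(\Psi_{s,r}[u])\,\ud r + \int_{s}^{t} P_{t-r}\, h(\Psi_{s,r}[u])\,\ud W_{r} =: A_{s,t} + N_{s,t},
\end{equation*}
where $g(v) := f(v)/v - f'(0)$ and $h(v) := \sigma(v)/v - 1$ extend continuously to $v=0$ with value $0$, thanks to $f,\sigma\in C^{1}$, $f(0)=\sigma(0)=0$ and $\sigma'(0)=1$. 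Consequently there exists a modulus $\omega$ with $\omega(\ve)\to 0$ as $\ve\to 0$ such that $|g(v)|+|h(v)|\leqslant \omega(\ve)$ whenever $0\leqslant v\leqslant \ve$. Writing $\tau^{\star}:=(s+\mf{t})\wedge \tau(s,u)\wedge \tau^{Y}(s,u)$, the bound $\|\Psi_{s,r}[u]\|_{\infty}\leqslant \ve_{1}$ holds for $r<\tau^{\star}$, so replacing the integrands above by their product with $1_{\{r<\tau^{\star}\}}$ leaves $Z_{s,t}$ unchanged for $t\leqslant \tau^{\star}$ and makes them uniformly bounded in space and time by $\omega(\ve_{1})$; from now on we assume this truncation has been performed.

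The second step is to estimate the two pieces separately. For the drift term, the semigroup bounds $\|P_{t}\|_{L^{\infty}\to L^{\infty}}\leqslant 1$ and $\|\nabla P_{t}\|_{L^{\infty}\to L^{\infty}}\lesssim t^{-1/2}$ give deterministically
\begin{equation*}
\sup_{s\leqslant t\leqslant s+\mf{t}} \Bigl( \|A_{s,t}\|_{\infty} + (t-s)^{\varrho}\|\nabla A_{s,t}\|_{\infty} \Bigr) \lesssim \omega(\ve_{1})\,\bigl(\mf{t} + \mf{t}^{1/2+\varrho}\bigr).
\end{equation*}
For the stochastic convolution, the spatial smoothness $\kappa\in C^{1}$ combined with Burkholder--Davis--Gundy and a Kolmogorov-type continuity argument in the time variable (using $\varrho<1/4$ to absorb the heat-kernel singularity of $\nabla P_{t-r}$) yield, for $p$ chosen sufficiently large,
\begin{equation*}
\EE \Bigl[ \sup_{s\leqslant t\leqslant s+\mf{t}} \Bigl( \|N_{s,t}\|_{\infty} + (t-s)^{\varrho}\|\nabla N_{s,t}\|_{\infty} \Bigr)^{p} \Bigr] \leqslant C(p,\mf{t},\kappa,\varrho)\,\omega(\ve_{1})^{p}.
\end{equation*}

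The conclusion then follows from Markov's inequality. Since $Z$ agrees with the truncated process on $[s,\tau^{\star}]$,
\begin{equation*}
\PP\bigl(\tau^{X}(s,u)<\tau^{\star}\bigr) \leqslant \PP\Bigl(\sup_{s\leqslant t\leqslant s+\mf{t}} \bigl(\|Z_{s,t}\|_{\infty}+(t-s)^{\varrho}\|\nabla Z_{s,t}\|_{\infty}\bigr) \geqslant \delta \Bigr) \leqslant C(p,\mf{t},\kappa,\varrho)\,\frac{\omega(\ve_{1})^{p}}{\delta^{p}},
\end{equation*}
and choosing $\ve_{1}=\ve_{1}(\mf{t},M,\delta)\in(0,1)$ small enough to bring this below $\tfrac{1}{2}e^{-\mf{t}}$ completes the proof. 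Note that $M$ does not enter the estimate of $Z$ itself; it features only through the stopping time $\tau^{Y}$ that defines $\tau^{\star}$. The main obstacle will be the $L^{p}$ moment bound for the stochastic convolution with the singular weight $(t-s)^{\varrho}\|\nabla\cdot\|_{\infty}$: the condition $\varrho<1/4$ is exactly the threshold at which, given only $\kappa\in C^{1}$, the gradient of $N$ has enough regularity for a Kolmogorov-type continuity argument to yield finite $L^{p}$ suprema, and the presence of the blow-up weight is essential because we only assume the initial condition $u$ is bounded (not smooth), so $\nabla X_{s,t}[u]-\nabla Y_{s,t}$ is only expected to be controlled away from $t=s$.
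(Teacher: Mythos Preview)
The overall decomposition $Z=A+N$ and the treatment of the drift term $A$ are fine and match the paper. The gap is in your bound on the gradient of the stochastic convolution $N$: using only $\|h(\Psi)\|_\infty\leqslant\omega(\ve_1)$ together with $\kappa\in C^1$ is \emph{not} sufficient to control $(t-s)^{\varrho}\|\nabla N_{s,t}\|_\infty$ in any $L^p$. Indeed, the pointwise second moment
\[
\EE\bigl|\partial_{x_i}N_{s,t}(x)\bigr|^2=\int_s^t\!\!\int_{(\TT^d)^2}\partial_{x_i}p_{t-r}(x-y_1)\,\partial_{x_i}p_{t-r}(x-y_2)\,\EE[h_r(y_1)h_r(y_2)]\,\kappa(y_1,y_2)\,\ud y\,\ud r
\]
diverges if you only bound $|h|\leqslant\omega(\ve_1)$ and $|\kappa|\leqslant\|\kappa\|_\infty$ (two factors of $(t-r)^{-1/2}$ from the heat kernels). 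One integration by parts moves a derivative onto the product $h_r(y_1)h_r(y_2)\kappa(y_1,y_2)$: this produces a harmless term with $\nabla\kappa$, but also an unavoidable term with $\nabla_y h(\Psi_{s,r})$, for which you provide no bound. Two integrations by parts would require $\kappa\in C^2$, which the paper does not assume. Heuristically, with $h$ merely bounded the effective forcing $h\,\dot W$ has a $C^0$ spatial covariance, so $N$ is only $C^{1-}$ in space and $\nabla N$ is not bounded; the weight $(t-s)^{\varrho}$ cannot cure this.

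The paper closes exactly this gap by proving a quantitative derivative bound $\|\partial_{x_i}h(\Psi_{s,r})\|_\infty\leqslant C(M,\mf{t},\varrho)\,(r-s)^{-1/2}\,\ve'$, obtained from heat-kernel upper and lower bounds on the fundamental solution $\Gamma^{\Psi}$ via the parametrix method. That argument uses $\|\nabla X_{s,r}[u]\|_\infty\leqslant \delta(r-s)^{-\varrho}+M$, which is only available before $\tau^{X}$ and $\tau^{Y}$; this is precisely where $M$ enters, so your remark that ``$M$ does not enter the estimate of $Z$ itself'' is incorrect. The restriction $\varrho<1/4$ then arises after this single integration by parts, when one integrates $(s-r)^{-2\alpha-1/2}r^{-1/2}$ inside the factorisation method.
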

\begin{proof}
    We derive an estimate for the difference between $X$ and $Y$. As usual, we
assume without loss of generality that $ s = 0 $. By Duhamel's formula we find
that for $ \gamma = f^{\prime} (0)$
    \[ Y_t =\gamma t + \int_0^t P_{t-s}~\ud
W_s \;,\]
    and similarly for $ X [u] $:
    \[ X_t[u] = \int_0^t P_{t-s} \frac{f( \Psi_{0,s}[u] ) }{ \Psi_{0,s}[u]  }  ~\ud s + \int_0^t P_{t-s}  \frac{\sigma( \Psi_{0,s}[u] ) }{ \Psi_{0,s}[u]  } ~\ud W_s.  \]
   Therefore
\begin{equs}[e:X-Y]
 X_t[u]- Y_t = \int_0^t P_{t-s}  \left(\frac{f( \Psi_{0,s}[u])} {
\Psi_{0,s}[u] }  -\gamma\right)~\ud s + \int_0^t P_{t-s} \left( \frac{\sigma(
\Psi_{0,s}[u] ) }{ \Psi_{0,s}[u]  }   -1\right)   ~\ud W_s. 
\end{equs}
Now, since both $ f $ and $ \sigma $ are $ \mC^{1} $ and $ f^{\prime}
(0) = \gamma, \sigma^{\prime} (0) =1 $, for any $ \ve^{\prime}
\in (0, 1) $ there exists an $ \ve_{1} (\ve^{\prime}) \in (0, 1) $ such that if
$ u > 0 $ and $ \| u \|_{\infty} \leqslant \ve_{1} $, then
\[  \left\| \gamma - \frac{f(u)}{u} \right\|_{\infty} \leqslant 
\ve^{\prime}  \qquad  \text{ and } \qquad \left\| 1-
\frac{\sigma(u)}{u} \right\|_{\infty} \leqslant \ve^{\prime} \;. \]
Therefore, we can estimate the first term in \eqref{e:X-Y} deterministically via Schauder
estimates:
\begin{equs}
\left\| \int_{0}^{t} P_{t-s}  \left(\frac{f( \Psi_{0,s}[u])} {
\Psi_{0,s}[u] }  -\gamma\right)\ud s \right\|_{\mC^{1}} \lesssim
\int_{0}^{t} \sqrt{t-s} \ve^{\prime}  \ud s \lesssim \sqrt{\mf{t}} \,
\ve^{\prime} \leqslant \frac{\delta}{2} \;,
\end{equs}
where the last inequality follows provided that $ \ve^{\prime} \leqslant C
\delta \mf{t}^{- \frac{1}{2}} $.
As for the second term in \eqref{e:X-Y}, to simplify the notation let us write 
\begin{equs}
\mu_{s} = \frac{\sigma( \Psi_{0,s \wedge \tau (0, u)}[u] ) }{ \Psi_{0,s \wedge
\tau(0, u)}[u]  }   -1 \;,
\end{equs}
so that $ \| \mu_{s} \|_{\infty} \leqslant \ve^{\prime}  $ for all $ s
\geqslant 0$. Then it suffices to prove that for an appropriate choice of $
\ve^{\prime} $ 
\begin{equs}
\PP \left( \sup_{0 \leqslant t \leqslant \mf{t}} \left\| \int_{0}^{t} P_{t-s}
\mu_{s} \ud W_{s} \right\|_{\mC^{1} } \geqslant \delta/2 \right) \leqslant
\frac{1}{2} e^{-  \mf{ t}} \;.
\end{equs}
Now, to treat the stochastic convolution appearing in this bound, we use the
classical ``factorisation method'', see for example \cite{DPZConvo}. Namely,
for any $ \alpha \in (0, 1) $ we can rewrite, for some constant $ c_{\alpha } > 0 $
\begin{equ}
\int_{0}^{t} P_{t-s} \mu_{s} \ud W_{s}  = c_{\alpha} \int_{0}^{t} (t -s)^{\alpha-1} P_{t-s}  \left(
\int_{0}^{s}(s-r)^{-\alpha} P_{s-r} \mu_{r} \ud W_{r} \right)\ud s \;.
\end{equ}
Since bounding the $ L^{\infty} $ norm of the convolution is simpler, we
restrict to estimating its gradient, in order to obtain a bound in $
\mC^{1} $.
In particular, for any $ \beta \in (0, 2 \alpha) $ and $ p \geqslant 1 $, and for $
W^{\beta,p} $ the fractional Sobolev space with
regularity parameter $ \beta $ and integrability parameter $ p$, we have by Schauder
estimates, for any $ i \in \{ 1, \dots, d \} $:
\begin{equs}
\left\| \partial_{x_{i}} \int_{0}^{t} P_{t-s} \mu_{s} \ud W_{s} \right\|_{W^{\beta, p}} \lesssim
\int_{0}^{t} (t- s)^{\alpha - \frac{\beta}{2} -1} \left\|
\partial_{x_{i}} \int_{0}^{s}(s-r)^{-\alpha} P_{s-r} \mu_{r} \ud W_{r} 
\right\|_{L^{p}}  \ud s \;.
\end{equs}
In particular, upon taking expectations and using the BDG inequality for the
last stochastic convolution integral, we can estimate
\begin{equs}
\EE \left\| \partial_{x_{i}} \int_{0}^{t} P_{t-s} \mu_{s} \ud W_{s}
\right\|_{W^{\beta, p}} & \lesssim
\int_{0}^{t} (t- s)^{\alpha - \frac{\beta}{2} -1} \EE\left\|
\partial_{x_{i}} \int_{0}^{s}(s-r)^{-\alpha} P_{s-r} \mu_{r} \ud W_{r} 
\right\|_{L^{p}}  \ud s \\
& \lesssim\int_{0}^{t} (t- s)^{\alpha - \frac{\beta}{2} -1}
\left( \EE\left\|\partial_{x_{i}} \int_{0}^{s}(s-r)^{-\alpha} P_{s-r} \mu_{r} \ud W_{r}
\right\|_{L^{p}}^{p} \right)^{\frac{ 1}{p}}   \ud s \\
& \lesssim\int_{0}^{t} (t- s)^{\alpha - \frac{\beta}{2} -1}
\left( \EE\left\|\partial_{x_{i}} \int_{0}^{s}(s-r)^{-\alpha} P_{s-r} \mu_{r} \ud W_{r}
\right\|_{L^{2}}^{2} \right)^{\frac{1}{2}}   \ud s \;.
\end{equs}
Now, for the last quantity, we estimate for any $ x \in \TT^{d} $, with $
p_{t}(x) $ the periodic heat kernel:
\begin{equs}
\EE & \left\vert \partial_{x_{i}} \int_{0}^{s}(s-r)^{-\alpha} P_{s-r} \mu_{r} \ud W_{r}(x)
\right\vert^{2} \\
& = \int_{0}^{s} \int_{(\TT^{d})^{2}} (s-r)^{- 2 \alpha}
\partial_{x_{i}} p_{s-r}(x-y_{1}) \partial_{x_{i}} p_{s-r}(x - y_{2}) \EE [
\mu(r, y_{1}) \mu (r, y_{2})] \kappa
(y_{1}, y_{2}) \ud y_{1} \ud y_{2} \ud r \;.
\end{equs}
Now note that if
we would naively use the uniform bound $ \| \mu \|_{\infty} \leqslant
\ve^{\prime} $ and $ \| \kappa \|_{\infty}< \infty $, we would have to bound
the heat kernels in \( L^{1} \), so that $ \| p_{s-r} \|_{L^{1}} \leqslant
(s - r)^{- \frac{1}{2}} $ and since we have two heat kernels we would end up
with a bound that is not integrable.

To circumvent this issue we use once integration by parts. In particular let us
assume that the following bound holds true:
\begin{equs}[e:bd-der-mu]
\| \partial_{x_{i}} \mu (s, x) \|_{\infty}  \leqslant C(M, \mf{t}, \varrho) s^{- \frac{1}{2}} \ve^{\prime} \;.
\end{equs}
We leave the verification of this bound to the next step of the present proof.
Using this estimate and once integration by parts, we obtain that (for some
multiplicative constants depend on $ M, \mf{t} $ and $ \varrho $) 
\begin{equs}
\EE & \left\vert \partial_{x_{i}} \int_{0}^{s}(s-r)^{-\alpha} P_{s-r} \mu_{r} \ud W_{r}(x)
\right\vert^{2} \\
& \lesssim (\ve^{\prime})^{2} \int_{0}^{s}
\int_{(\TT^{d})^{2}} (s-r)^{- 2 \alpha}
|\partial_{x_{i}} p_{s-r}(x-y_{1})| |p_{s-r}(x - y_{2})| r^{- \frac{1}{2}}
 \ud y_{1} \ud y_{2} \ud r \\
& \lesssim (\ve^{\prime})^{2}  \int_{0}^{s}
(s-r)^{- 2 \alpha} (s -r)^{- \frac{1}{2}}  r^{- \frac{1}{2}}
 \ud r \\
& \lesssim (\ve^{\prime})^{2}  s^{-2 \alpha} \;,
\end{equs}
provided that $ \alpha \in (0, 1/4) $.  We have proven that for any $ p
\geqslant 1 $ and $ 0 < \beta/2 < \alpha < 1/4 $ we can estimate
\begin{equs}
\sup_{0 \leqslant s \leqslant \mf{t}} \EE \left[ \left\| t^{- \alpha} \partial_{x_{i}}
\int_{0}^{t} P_{t-s} \mu_{s} \ud W_{s} \right\|_{W^{\beta, p}} \right] \lesssim
\ve^{\prime}  
\;.
\end{equs}
Through a standard Kolmogorov-type argument our previous calculations can be
adapted to further show
\begin{equs}[e:final-blowup]
 \EE \left[ \sup_{0 \leqslant s \leqslant \mf{t}}\left\| t^{- \alpha} \partial_{x_{i}}
\int_{0}^{t} P_{t-s} \mu_{s} \ud W_{s} \right\|_{L^{\infty}} \right] \lesssim
\ve^{\prime} \;,
\end{equs}
since we can choose $ p \geqslant 1 $ sufficiently large such that $
W^{ \beta, p} \subseteq L^{\infty} $ by Sobolev embeddings.
Now \eqref{e:final-blowup} is sufficient to prove our result trough Markov's
inequality by choosing $ \ve^{\prime} $ and therefore $ \ve(\mt{t}, M , \delta)
$ sufficiently small. The next step left in our proof is
therefore to verify \eqref{e:bd-der-mu}.

\textit{Bound on the derivative of $ \mu $.}  We find
\begin{equs}
\partial_{x_{i}} \mu_{s} = \left(\sigma^{\prime}( \Psi_{0,s \wedge \tau (0, u)}[u]
)   - \frac{\sigma( \Psi_{0,s \wedge \tau (0, u)}[u] ) }{ \Psi_{0,s \wedge
\tau(0, u)}[u]  } \right) \frac{ \partial_{x_{i}}\Psi_{0,s \wedge \tau(0, u)}[u]  }{\Psi_{0,s \wedge
\tau(0, u)}[u]  } \;.
\end{equs}
Now, the first term in the product is small (of order $ \ve^{\prime} $), while
we expect the second term to be of order one. 
In particular, we can write
\begin{equs}
\frac{\partial_{x_{i}} \Psi_{s}}{\Psi_{s}} = \partial_{x_{i}} X_{s} +
\frac{\partial_{x_{i}} \overline{\Psi}}{ \overline{\Psi} } \;,
\end{equs}
where we used the definition of $ \overline{\Psi} $ from \eqref{phi:psi}.
Hence from the definition of the stopping time $ \tau^{\mathrm{tot}} $:
\begin{equs}
\left\| \frac{\partial_{x_{i}} \Psi_{s}}{\Psi_{s}}\right\|_{\infty} \leqslant
\delta s^{- \varrho} + M + \| \partial_{x_{i}} \overline{\Psi}_{s} / \overline{\Psi}_{s} \|_{\infty}\;,
\qquad \forall s < \tau^{\mathrm{tot}} (0, u) \;.
\end{equs}
Now to bound the last term we use once more the parametrix method. Indeed, we
can represent
\begin{equs}
\overline{\Psi}_{s} [u](x) = \int_{\TT^{d}} \Gamma^{\Psi}_{0,s}(x, y) \, u(y)  \ud
y \;,
\end{equs}
where $ \Gamma^{\Psi} $ is the fundamental solution appearing already in
\eqref{e:gamma-psi}. Note that $ \Psi $ is not the flow to a linear equation:
the integral representation holds only because $ \Gamma^{\Psi} $ depends itself
on the initial condition $ u $ (we omit this to lighten the notation, and
because the estimates we obtain on $ \Gamma^{\Psi} $ do not depend on $ u
$). For clarity, we recall here the equation satisfied by $ \Gamma^{\Psi} $:
\begin{equs}
(\partial_{t} - \Delta) \Gamma^{\Psi}_{s, t} & =2\nabla \Gamma^{\Psi}_{s,t} \cdot \nabla X_{s,t}[u]+
\Gamma^{\Psi}_{s, t}  |\nabla X_{s,t}[u] |^{2}-\frac{1}{2} \kappa_{\tr}
\frac{\sigma(\Psi_{s,t}[u])^2}{\Psi^2_{s,t}[u]} \Gamma^{\Psi}_{s,t} \;.
\end{equs}
Now we will prove that for some constants $ C, c > 0 $ that depend on $ M,
\mf{t} $ and $ \varrho $, we find that for all $ t <
\tau^{\mathrm{tot}}(0, u) $:
\begin{equs}[e:heat-kernal-bds]
\partial_{x_{i}} \Gamma^{\Psi}_{0, t} (x, y) & \leqslant C t^{-
\frac{1}{2}} p_{c t} (x- y)  \;,\\
\Gamma^{\Psi}_{0, t} (x,y)& \geqslant C^{-1} p_{c t}(x-y)  \;.
\end{equs}
In this way, up to choosing a larger $
C(M, \mf{t}, \varrho) > 0
$, we have proven that
\begin{equs}
 \| \partial_{x_{i}} \overline{\Psi}_{s} / \overline{\Psi}_{s} \|_{\infty}
\leqslant C (M, \mf{t}, \varrho) s^{- \frac{1}{2}} \;.
\end{equs}
As a consequence, since $ \varrho < 1/2 $, we can overall bound
\begin{equs}
\| \partial_{x_{i}} \mu (s, x) \|_{\infty} &\leqslant C(M, \mf{t}, \varrho) s^{- \frac{1}{2}
} \left\| \sigma^{\prime}( \Psi_{0,s \wedge \tau (0, u)}[u]
)   - \frac{\sigma( \Psi_{0,s \wedge \tau (0, u)}[u] ) }{ \Psi_{0,s \wedge
\tau(0, u)}[u]  } \right\|_{\infty} 
\leqslant C(M, \mf{t}, \varrho) s^{- \frac{1}{2}} \ve^{\prime}  \;,
\end{equs}
where the last inequality follows provided that we choose $ \ve_{1}
(\ve^{\prime}) $ sufficiently small. Hence we have proven \eqref{e:bd-der-mu}.

\textit{The parametrix method for $ \Gamma^{\Psi} $.}
In the final step of this proof, we verify \eqref{e:heat-kernal-bds}.
Following somewhat the arguments in the proof of
Lemma~\ref{lem:comp-phi-psi}, we will show that 
\begin{equs}
\Gamma^{\Psi}_{0, t}(x, y) = p_{t} (x- y) + \sum_{k = 1}^{\infty}
p \star \Xi^{(k)}_{y} (t, x) \;,
\end{equs}
where $ p_{t} (x) $ is the periodic heat kernel, and
\begin{equs}
\Xi^{(1)}_{y} (s,z)  & = b(s, z) \cdot \nabla_{z} p_{s} (z -y) + d(s, z)
p_{s}(z-y) \;, \\
\Xi^{(k)}_{y} (s, z) & = b (s, z) \cdot \nabla_{z} (p \star \Xi^{(k-1)
}_{y}) (s,z)+ d(s, z)p \star \Xi^{(k)}_{y} (s, z) \;, \qquad \forall k \geqslant 2 \;.
\end{equs}
where $ b = 2 \nabla X $ and $ d = |\nabla X_{s,t}[u] |^{2}-\frac{1}{2} \kappa_{\tr}
\frac{\sigma(\Psi_{s,t}[u])^2}{\Psi^2_{s,t}[u]} $. 
The coefficients $ b, d $ satisfy the a-priori bounds
\begin{equs}
\| b_{s} \|_{\infty} \leqslant 2 (\delta s^{- \varrho}+M) \;, \quad \|
d_{s} \|_{\infty} \leqslant \| \kappa \|_{\infty} + M^{2} + \delta
s^{- \varrho}(2M + \delta s^{- \varrho} ) \;, \quad \forall 0
\leqslant s < \tau^{\mathrm{tot}}(0, u) \;.
\end{equs}
Since we are not keeping track of the parameter $ M, \mf{t} > 1 $ we can
simplify these bounds and find a constant $ C(M, \mf{t}) > 1 $ (from now on we
will omit the dependence on $ M, \mf{t} $ in the constants) such that
\begin{equs}
\| b_{s} \|_{\infty} \leqslant C s^{- \varrho} \;, \quad \|
d_{s} \|_{\infty} \leqslant C s^{- 2 \varrho} \;, \quad \forall 0
\leqslant s < \tau^{\mathrm{tot}}(0, u) \;.
\end{equs}
Now we find that for any $ k \geqslant 1 $ (with $ \Xi^{(0)}_{y} =
\delta_{0}(s) \delta_{y}(z) $ in the case $ k=1 $) we obtain for some $ C > 0 $
\begin{equs}[e:bd-xi]
| \Xi^{(k)}_{y} (s, z) | \leqslant & C    
s^{- \varrho}   \left\vert \int_{0}^{s} \int_{\TT^{d}} \partial_{z_{i}}p_{s -r}
(z-v)  \Xi^{(k-1)}_{y} (r,v) \ud v \ud r \right\vert \\
& \qquad + C s^{-2 \varrho} \int_{0}^{s} \int_{\TT^{d}}
p_{s -r} (z-v) | \Xi^{(k-1)}_{y} |(r,v) \ud v \ud r \;.
\end{equs}
Applying this estimate to $ k = 1 $ (with $ \Xi^{(0)}_{y} =
\delta_{0}(s) \delta_{y}(z) $) we deduce that for some $ C> 1 $ 
\begin{equs}
| \Xi^{(1)}_{y}|(s,z) \leqslant C s^{- \frac{1}{2} - \varrho} p_{s}(z-y) \;,
\end{equs}
where we have used that $ \varrho < 1/2  $. Integrating against the heat-kernel
we obtain the following bound:
\begin{equs}
| p \star \Xi^{(1)}_{y} | (s,z) & \leqslant \left( C \int_{0}^{s}     
 r^{- \frac{1}{2}- \varrho } \ud r \right) p_{s}(z -y) \leqslant C(1/2-
\varrho) s^{ \frac{1}{2} - \varrho} p_{s}(z -y) \;,
\end{equs}
and similarly for the derivative (allowing the value of the constant $ C > 0
$ to change from line to line):
\begin{equs}
| \partial_{z_{i}} p \star \Xi^{(1)}_{y} | (s, z) & \leqslant C
\int_{0}^{s} (s-r)^{- \frac{1}{2}} r^{- \frac{1}{2} - \varrho} \ud r
p_{s}(z-y) \leqslant C s^{- \varrho} p_{s}(z-y)\;.
\end{equs}
Now we claim that we can iterate this bound for every $ k \geqslant 0 $ to find
that for some $ C > 1 $ (uniformly over $ k $)
\begin{equs}[e:aim-induction]
| \Xi^{(k)}_{y} | (s, z) & \leqslant \frac{C^{k}}{ \sqrt{k !}} \,  s^{k(1/2 -
\varrho)- 1} \, p_{s}(z-y)\;.
\end{equs}
This claim can be verified by induction: indeed the case $ k=1 $ has already
been checked. Assume therefore that the bound holds true for $ k-1 $ and let us
prove it for $ k $.
Via \eqref{e:bd-xi} we can bound for some constant $ C^{\prime} > 1 $ whose
value may change from line to line and for $ \alpha = 1/2- \varrho $:
\begin{equs}
| \Xi^{(k)}_{y}  | (s, z) & \leqslant C^{\prime} 
 \left( s^{- \varrho} \int_{0}^{s} (s-r)^{- \frac{1}{2}} r^{(k-1) \alpha -1}\ud r +
s^{- 2 \varrho} \int_{0}^{s} r^{(k-1) \alpha -1} \ud r \right) 
\frac{C^{k-1}}{\sqrt{(k-1)!}} \, p_{s}(z-y) \;.
\end{equs}
Now the integrals in the parenthesis are estimated by
\begin{equs}
s^{\frac{1}{2}- \varrho} s^{(k-1) \alpha -1}  & \int_{0}^{1} (1-r)^{-
\frac{1}{2}} r^{(k-1) \alpha -1}\ud r +
s^{1- 2 \varrho} s^{ (k-1) \alpha -1} \int_{0}^{1} r^{(k-1) \alpha -1} \ud r \\
& \leqslant C^{\prime} s^{k\alpha  -1} \left( B (1/2, (k-1) \alpha) + 
\frac{1}{(k-1) \alpha} \right) \\
& \leqslant C^{\prime} s^{k \alpha -1} k^{- \frac{1}{2} } \;,
\end{equs}
where in the last step we used the asymptotic $ B(1/2, k) \lesssim
k^{- \frac{1}{2}} $, cf. \cite[Lemma 3.9]{PerkowskiVanZuijlen23HeatKernel}.
Therefore the estimate is proven, provided that $ C  $ is chosen
sufficiently large, with respect to $ C^{\prime} $.

Now, from \eqref{e:aim-induction} we can deduce that for some $ C^{\prime} > 1 $
\begin{equs}
| p \star \Xi^{(k)}_{y} |(s,z) & \leqslant C^{\prime} \frac{C^{k}}{ \sqrt{k !}} \,  s^{k(1/2 -
\varrho)} \, p_{s}(z-y)\;, \\
| \partial_{z_{i}} p \star \Xi^{(k)}_{t} | (s, z) & \leqslant C^{\prime} \frac{C^{k}}{ \sqrt{k !}} \,  s^{k(1/2 -
\varrho)- 1/2} \, p_{s}(z-y)\;. \label{e:grad}
\end{equs}
Since $ \sum_{k} t^{k} / \sqrt{k!} \lesssim  e^{t^{2}}$, we find that
\eqref{e:aim-induction} is proven (in the sense that the series is absolutely
convergent). 
As a consequence, we deduce that \eqref{e:heat-kernal-bds} is verified
(the upper bound follows immediately from \eqref{e:grad}, while the lower bound
follows through same steps that lead to the analogous bound in
\cite[Theorem 1.1]{PerkowskiVanZuijlen23HeatKernel}) and this completes overall the
proof of the lemma.
\end{proof}
Now we deduce from all the previous lemmata that the stopping times $ \tau_{i} $ satisfy $
\tau_{i+1} = \tau_{i} + \mf{t}$ with high probability. This is the content of
the following corollary.
\begin{corollary}\label{cor:stop}
For any $ \mf{t} > 1 $ there exists an $ M (\mf{t}) > 1 $, a $ \delta
(\mf{t}, M) \in (0, 1)  $, a $ \ve_{1} (\mf{t}, M, \delta) $ and a $
\ve(\mf{t}, M , \delta, \ve_{1}) $ such that the stopping times $ \{
\tau_{i} \}_{i \in \NN} $ from Definition~\ref{def:flow:nl} satisfy
\begin{equs}
\PP_{\tau_{i}} ( \tau_{i+1} < \tau_{i}+ \mf{t}) \leqslant e^{- \mf{t}} \;.
\end{equs}
Furthermore, the parameters $ M, \delta, \ve_{1} $ 
satisfy \eqref{e:dtm}.
\end{corollary}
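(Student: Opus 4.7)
The plan is to bound the probability of the complementary event by a union bound that matches the three ``danger'' stopping times appearing in the definition of $\tau^{\mathrm{tot}}$. Writing $s = \tau_i$ and $u = \mT \u_{\tau_i-}$ (so $\|u\|_\infty \leqslant \ve$), we have $\tau_{i+1} < s + \mf{t}$ only if at least one of $\tau^Y(s,u), \tau^X(s,u), \tau(s,u)$ is smaller than $s + \mf{t}$. I would decompose
\begin{equs}
\{\tau_{i+1} < s + \mf{t}\} \subseteq & \; \{\tau^Y(s,u) < s + \mf{t}\} \cup \{\tau^X(s,u) < (s + \mf{t}) \wedge \tau(s,u) \wedge \tau^Y(s,u)\} \\
& \cup \{\tau(s,u) < s + \mf{t},\; \tau^Y(s,u) \geqslant s + \mf{t},\; \tau^X(s,u) \geqslant s + \mf{t}\}\;,
\end{equs}
and treat each term separately.

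The first two events are handled essentially off the shelf. For the first, Lemma~\ref{ux} gives $M = M(\mf{t}) > 1$ such that the probability is at most $\tfrac{1}{2} e^{-\mf{t}}$. For the second, once $M$ is fixed, Lemma~\ref{uy} furnishes $\ve_1 = \ve_1(\mf{t}, M, \delta) \in (0,1)$ such that the probability is again at most $\tfrac{1}{2} e^{-\mf{t}}$; we additionally insist $\ve_1 < \overline{\ve}_1(\mf{t})$ from Lemma~\ref{lem:comp-phi-psi} and we pick $\delta = \delta(\mf{t}, M) \in (0,1)$ small enough that $\delta e^{C \mf{t} M^2} \leqslant 1$, so that the compatibility condition \eqref{e:dtm} is met.

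The key remaining step, and the only point that requires a substantive argument, is showing that the third event is \emph{empty} for sufficiently small $\ve$. On this event, both $\tau^X$ and $\tau^Y$ survive past $s + \mf{t}$, so Lemma~\ref{u:phi} applies and yields the deterministic bound
\begin{equs}
\sup_{s \leqslant t \leqslant s + \mf{t}} \| \Psi_{s,t}[u] \|_\infty \leqslant C(M, \mf{t}) \| u \|_\infty \leqslant C(M, \mf{t}) \, \ve\;.
\end{equs}
If we now choose $\ve = \ve(\mf{t}, M, \delta, \ve_1) \in (0, \ve_1)$ sufficiently small that $C(M, \mf{t}) \ve < \ve_1$, then $\|\Psi_{s,t}[u]\|_\infty < \ve_1$ on $[s, s+\mf{t})$, which contradicts $\tau(s,u) < s + \mf{t}$. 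Thus the third event has probability zero.

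Combining the three bounds yields $\PP_{\tau_i}(\tau_{i+1} < \tau_i + \mf{t}) \leqslant \tfrac{1}{2} e^{-\mf{t}} + \tfrac{1}{2} e^{-\mf{t}} + 0 = e^{-\mf{t}}$, as required, and by construction the chosen parameters satisfy \eqref{e:dtm}. The main subtlety is getting the dependence order of the parameters right ($M$ first, then $\delta$ to make \eqref{e:dtm} hold, then $\ve_1$ consistent with both Lemmas~\ref{uy} and \ref{lem:comp-phi-psi}, and finally $\ve$ small enough to eliminate the third event); once this order is respected, all ingredients are already in place.
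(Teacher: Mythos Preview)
Your approach is the same as the paper's: union-bound over the three stopping times, invoke Lemmas~\ref{ux} and~\ref{uy} for the $\tau^Y$ and $\tau^X$ contributions, and use the deterministic bound of Lemma~\ref{u:phi} to make the $\tau$ contribution vanish, choosing the parameters in exactly the order you describe.

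There is, however, a slip in your set inclusion: the scenario $\tau(s,u) < \tau^X(s,u) < s + \mf{t}$ together with $\tau^Y(s,u) \geqslant s + \mf{t}$ lies in the left-hand side but in none of your three events (it fails your third event because $\tau^X < s+\mf{t}$, and fails your second because $\tau^X \not< \tau$). The fix is immediate---replace your third event by $\{\tau < \tau^X \wedge \tau^Y \wedge (s+\mf{t})\}$, which is what the paper writes. Your emptiness argument then still applies verbatim: Lemma~\ref{u:phi} bounds $\|\Psi_{s,t}[u]\|_\infty$ on the whole interval $[s,\, (s+\mf{t}) \wedge \tau^X \wedge \tau^Y]$, so once $C(M,\mf{t})\,\ve < \ve_1$ one obtains $\tau > (s+\mf{t}) \wedge \tau^X \wedge \tau^Y$, which is precisely the emptiness of the corrected third event.
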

\begin{proof}
This is a consequence of Lemma~\ref{u:phi}, Lemma~\ref{ux} and
Lemma~\ref{uy}. Indeed, since $ \tau_{i+1} = \tau^{\mathrm{tot}}
(\tau_{i}, \underline{u}_{\tau_{i}}) $, we find
\begin{equs}
\PP_{\tau_{i}} ( \tau_{i+1} < \tau_{i}+ \mf{t}) \leqslant & \PP_{
\tau_{i}} (\tau^{X} < \tau \wedge \tau^{Y} \wedge (\tau_{i} + \mf{t})) +
\PP_{\tau_{i}} ( \tau^{Y} < \tau \wedge \tau^{X} \wedge (\tau_{i} + \mf{t})) \\
& + \PP_{\tau_{i}} ( \tau < \tau^{X} \wedge \tau^{Y} \wedge (\tau_{i} +
\mf{t})) \\
 \leqslant & e^{- \mf{t}}\;,
\end{equs}
by applying the three quoted lemmata with $ \tau^{X} =
\tau^{X}(\tau_{i}, \underline{u}_{\tau_{i}}) $ and similarly for $
\tau^{Y} $ and $ \tau $. In particular, Lemma~\ref{u:phi} is used to identify a
value $ \ve (\mf{t}, M , \delta, \ve_{1}) $ such that $ \PP_{\tau_{i}}
(\tau < \tau^{X} \wedge \tau^{Y} \wedge (\tau_{i} +
\mf{t})) =0 $. The fact that the parameters can be chosen to satisfy
\eqref{e:dtm} follows from the fact that we can choose first $ M
(\mf{t}) $ in Lemma~\ref{uy}, and then $ \delta	(\mf{t}, M) $ in
Lemma~\ref{uy}. Since we can choose $ \delta $ small at will, we can choose it
so small that the first bound in \eqref{e:dtm} is satisfied. Then for such
choice of $ \delta $, we choose $ \ve_{1} (\mf{t}, M, \delta) \in (0, 1) $ such
that Lemma~\ref{ux} holds true and we can in addition impose that the second
bound in \eqref{e:dtm} is verified. Finally, we choose $ \ve (\mf{t}, M,
\delta, \ve_{1}) $ through Lemma~\ref{u:phi}. The result is proven. 
\end{proof}
As in the linear case $\sigma(u)=u$ treated in
Proposition~\ref{prop:intermediate}, we have the following uniform negative
moment bound on $\u$. Here we recall that by the construction specified in
Definition~\ref{def:flow:nl} we have that $ u_{t} \geqslant \u_{t} \geqslant
\frac{1}{2} w_{t} $ for all $ 0\leqslant t  \leqslant \tau^{\mathrm{tot}}(0,u) \leq
\mf{t} $. 

\begin{proposition}\label{prop:intermediate:nl}
   Under the assumptions of Theorem~\ref{thm:moment-estimate}, there exist $ {\zeta},
{\eta}_{0} > 0 $ and a constant $ C_{1} (\eta_{0}) $ 
such that the following holds.
For any $ \mf{t} > 1 $ there exists a choice of $ M( \mf{t})> 1 $, $ \delta
(\mf{t}, M) \in (0, 1) $, $ \ve_{1}(\mf{t}, M , \delta) \in (0,1) $ and $
\ve(\mf{t}, M, \delta, \ve_{1}) < \ve_{1} $ such that
for a constant ${C}_{2} ( \mf{t}, M, \delta, \ve_{1}, \ve) > 0 $ 
\begin{equation*}
\begin{aligned}
\EE_{\tau_{i}} \left[ \left(\min_{x \in \TT^{d}} \u ( \tau_{i+1}, x)
\right)^{- {\eta}} \right] \leqslant {C}_{1}({\eta}_{0}) e^{-\eta  {\zeta} \mf{t}}
\left(\min_{x \in \TT^{d}} \u ( \tau_{i}, x) \right)^{- {\eta}}+
{C}_{2}( \mf{t}, {\ve}, {\eta}_{0}) \;,
\end{aligned}
\end{equation*}
for all $i \in \NN$ and $ {\eta} \in (0, {\eta}_{0}) $.
\end{proposition}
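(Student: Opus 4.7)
The plan is to follow closely the template of Proposition~\ref{prop:intermediate}, replacing two ingredients by their fully nonlinear analogues: the pathwise comparison $\u_t \geqslant \tfrac{1}{2}\Phi_{\tau_i,t}[\u_{\tau_i}]$ on $[\tau_i,\tau_{i+1})$ supplied by Lemma~\ref{lem:comp-phi-psi}, and the uniform tail bound $\PP_{\tau_i}(\tau_{i+1}<\tau_i+\mf{t})\leqslant e^{-\mf{t}}$ supplied by Corollary~\ref{cor:stop}. Accordingly, the parameters $M,\delta,\ve_1,\ve$ must be fixed in the precise order prescribed by Corollary~\ref{cor:stop}, which guarantees simultaneously that the constraint~\eqref{e:dtm} holds (so that Lemma~\ref{lem:comp-phi-psi} applies up to $\tau_{i+1}-$) and that the tail estimate is in force.

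First I would handle the cut-off jump at $\tau_{i+1}$ exactly as in Proposition~\ref{prop:intermediate}: since $\u_{\tau_{i+1}}=\mT \u_{\tau_{i+1}-}$, the elementary estimate
\[
\Big(\min_{x\in\TT^d}\u(\tau_{i+1},x)\Big)^{-\eta}\leqslant \Big(\min_{x\in\TT^d}\u(\tau_{i+1}-,x)\Big)^{-\eta}+\ve^{-\eta}
\]
reduces the task to bounding $\EE_{\tau_i}[(\min_x \u(\tau_{i+1}-,x))^{-\eta}]$. I would then split on the events $\{\tau_{i+1}=\tau_i+\mf{t}\}$ and $\{\tau_{i+1}<\tau_i+\mf{t}\}$.

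On the good event $\{\tau_{i+1}=\tau_i+\mf{t}\}$, Lemma~\ref{lem:comp-phi-psi} gives $\u(\tau_{i+1}-,x)\geqslant \tfrac12 \Phi_{\tau_i,\tau_i+\mf{t}}[\u_{\tau_i}](x)$, and Proposition~\ref{prop:lin} applied with stopping time $\tau_i$ and horizon $\mf{t}>1$ yields a bound of the form $C_1(\eta_0)e^{-\eta\zeta\mf{t}}(\min_x \u(\tau_i,x))^{-\eta}$, which is exactly the claimed contraction. On the bad event $\{\tau_{i+1}<\tau_i+\mf{t}\}$ I would combine Cauchy--Schwarz with the tail bound from Corollary~\ref{cor:stop} to write
\[
\EE_{\tau_i}\!\left[\Big(\min_x \u(\tau_{i+1}-,x)\Big)^{-\eta}\,1_{\{\tau_{i+1}<\tau_i+\mf{t}\}}\right]\leqslant \EE_{\tau_i}\!\left[\Big(\min_x \u(\tau_{i+1}-,x)\Big)^{-2\eta}\right]^{1/2}\!e^{-\mf{t}/2},
\]
and bound the $2\eta$-moment once more via the comparison $\u\geqslant \tfrac12\Phi_{\tau_i,\cdot}[\u_{\tau_i}]$, mirroring the two-subcase analysis of Proposition~\ref{prop:intermediate}: for $\tau_{i+1}<\tau_i+1$ I would use the short-time heat-kernel bound of Lemma~\ref{lem:c-bd} on the kernel of $\Phi$, while for $\tau_{i+1}\in[\tau_i+1,\tau_i+\mf{t})$ I would apply Proposition~\ref{prop:lin} directly. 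This produces a bound of the shape $\widetilde C(\eta_0)[(\min_x \u(\tau_i,x))^{-\eta}\vee 1]$.

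Combining these bounds with the jump term produces the stated inequality, where the $e^{-\mf{t}/2}$ factor from the bad event can be absorbed into the contraction by restricting $\eta\leqslant \eta_0$ with $\eta_0\zeta<1/2$, and the remaining $\ve^{-\eta}$ and the ``$\vee 1$'' piece are folded into the additive constant $C_2(\mf{t},M,\delta,\ve_1,\ve)$. The main obstacle is not any individual estimate, each of which parallels the linear case, but rather the coordinated choice of parameters: one must ensure at once that \eqref{e:dtm} is satisfied, that the probability of the bad event is no larger than $e^{-\mf{t}}$, and that the cut-off scale $\ve$ is small enough for Lemma~\ref{lem:comp-phi-psi} to apply yet not so small that $\ve^{-\eta}$ destroys the Lyapunov structure. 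The ordering $M(\mf{t})\to\delta(\mf{t},M)\to\ve_1(\mf{t},M,\delta)\to\ve(\mf{t},M,\delta,\ve_1)$ of Corollary~\ref{cor:stop} is precisely tailored to make this possible.
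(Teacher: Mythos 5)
Your proposal is correct and follows essentially the same route as the paper: bound the jump via the cut-off, split on the events $\{\tau_{i+1}=\tau_i+\mf{t}\}$ and $\{\tau_{i+1}<\tau_i+\mf{t}\}$, transfer from $\Psi$ to $\Phi$ via Lemma~\ref{lem:comp-phi-psi} and invoke Proposition~\ref{prop:lin} on the good event, and use Cauchy--Schwarz together with Corollary~\ref{cor:stop} plus the linear-case moment bounds on the bad event. Your bookkeeping with the ``$\vee\,1$'' is slightly looser than the paper's $C(\min_x\u(\tau_i,x))^{-\eta}$ (obtained by first normalising to $\min_x\u(\tau_i,x)\geqslant 1$, which is legitimate once the comparison has reduced matters to the linear flow $\Phi$), but this does not affect the result.
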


\begin{remark}\label{rem:frakt}
At this point, the only parameter that is left free to choose is the time
horizon $ \mf{t} $. To complete the proof of Theorem~\ref{thm:moment-estimate},
we will choose $ \mf{t} $ such that $ C_{1}(\eta_{0}) e^{- \zeta
\eta_{0} \mf{t}} < 1 $.
\end{remark}

\begin{proof}
    As in Proposition~\ref{prop:intermediate} we start by bounding the jump at
time $\tau_{i+1}$. The main difference is that here we work with the nonlinear
flow $\Psi$ associated to~\eqref{e:main} and use the stopping times introduced
in Definition~\ref{def:flow:nl}. In any case, we have by definition
    \[ \u(\tau_{i+1},x)=\mathcal{T} \u(\tau_{i+1-},x). \]
Therefore
\begin{align*}
    \left( \min_{x \in \TT^{d}} \u ( \tau_{i+1}, x) \right)^{-\eta} & \leqslant
\left(\min_{x \colon \u ( \tau_{i+1}, x) \leqslant {\ve}} \u ( \tau_{i+1}, x) \right)^{-\eta} +{\ve}^{-\eta} \\
    & \leqslant \left(\min_{x \in \TT^d} \u ( \tau_{i+1}-, x) \right)^{-\eta} +{\ve}^{-\eta} \;,
\end{align*}
leading to
\begin{equation} \label{e:jumpnl}
\begin{aligned}
\EE_{\tau_{i}} \left[ \left(\min_{x \in \TT^{d}} \u ( \tau_{i+1}, x)
\right)^{- \eta} \right] \leqslant  \EE_{\tau_{i}} \left[ \left(\min_{x \in \TT^{d}} \u (
\tau_{i+1}-, x) \right)^{- \eta} \right] + {\ve}^{-\eta} \;.
\end{aligned}
\end{equation}
Now it suffices to bound the right hand-side of \eqref{e:jumpnl}.
Again, as in the proof of Proposition~\ref{prop:intermediate}, we distinguish between the events
\begin{align*}
    \tau_{i+1} = \tau_i + \mf{t}\;, \qquad \text{ and } \qquad \tau_{i+1} < \tau_i + \mf{t} \;.
\end{align*}
In the first case, we use the Lyapunov property for the linear flow in
Theorem~\ref{thm:linearised-1}. Instead, the second event has small probability
by Corollary~\ref{cor:stop}, so that it will not contribute much.
In particular, we now fix $ M, \delta, \ve_{1} , \ve $ such that
Corollary~\ref{cor:stop} holds true.

\textit{The event $ \tau_{i+1} = \tau_{i} + \mf{t} $.} We observe that for any
$ \mf{t} > 1 $, since $ M, \delta, \ve_{1} $ satisfy \eqref{e:dtm}, by
Lemma~\ref{lem:comp-phi-psi}, we have
$\Phi_{\tau_i,\tau_i+\mf{t}}[\u]\leqslant 2\Psi_{\tau_i,\tau_i+\mf{t}}[\u] $.
Therefore, we obtain \begin{equation*}
\begin{aligned}
\EE_{\tau_{i}} \left[ \left( \min_{x \in \TT^{d}}  \u (\tau_{i+1}-, x) 
\right)^{- \eta } 1_{\{ \tau_{i +1} = \tau_{i} + \mf{t} \}} \right] &\leqslant 
\EE_{\tau_{i}} \left[ \left(  \min_{x \in \TT^{d}}  \Psi_{\tau_{i}, \tau_{i} +
\mf{t}}[ \u (\tau_{i}, \cdot)] (x)
\right)^{- \eta } \right] \;\\
& \leqslant {2^\eta} \EE_{\tau_{i}} \left[ \left(  \min_{x \in \TT^{d}}  \Phi_{\tau_{i}, \tau_{i} +
\mf{t}}[ \u (\tau_{i}, \cdot)] (x)
\right)^{- \eta } \right].
\end{aligned}
\end{equation*}
Here we can use Proposition~\ref{prop:lin} for the linear flow $\Phi$ in order
to obtain
\begin{equation*}
\begin{aligned}
\EE_{\tau_{i}} \left[ \left(  \min_{x \in \TT^{d}}  \u (\tau_{i+1}-, x)
\right)^{- \eta } 1_{\{ \tau_{i +1} = \tau_{i} + \mf{t} \}} \right] & \leqslant {2^\eta}
C({\eta}_{0}) e^{- \eta {\zeta}(\gamma) \mf{t}} \left( \int_{\TT^{d}}
 \u(\tau_{i}, x)  \ud x \right)^{- \eta} \\
& \leqslant {2^\eta} C({\eta}_{0}) e^{- \eta {\zeta}(\gamma) \mf{t}} \left( \min_{x \in
\TT^{d}}  \u(\tau_{i}, x)  \right)^{- \eta}\;,
\end{aligned}
\end{equation*}
with $\gamma=f'(0)$.

\textit{The event $ \tau_{i+1} < \tau_{i} + \mf{t} $.} Here, as in the proof of
Proposition~\ref{prop:intermediate}, we use Cauchy--Schwarz to bound
\begin{equation}\label{e:small:nl}
\begin{aligned}
\EE_{\tau_{i}} & \left[  \left(  \min_{x \in \TT^{d}} \u (\tau_{i+1}-, x) 
\right)^{- \eta } 1_{\{ \tau_{i +1} < \tau_{i} + \mf{t} \}}  \right] \\
& \leqslant \EE_{\tau_{i}} \left[  \left( \min_{x \in \TT^{d}}  \u (\tau_{i+1}-, x) 
\right)^{- 2 \eta } 1_{\{ \tau_{i +1} < \tau_{i} + \mf{t} \}} 
\right]^{\frac{1}{2}} \PP_{\tau_{i}}(\tau_{i +1} < \tau_{i} +
\mf{t})^{\frac{1}{2}} .
\end{aligned}
\end{equation}
Now by Corollary~\ref{cor:stop} and in view of our choice of parameters, we
have the estimate
\begin{equation}\label{e:stop:nl}
\begin{aligned}
\PP_{\tau_{i}} (\tau_{i +1} < \tau_{i} + \mf{t}) \leqslant e^{- 2
\eta_{0} \mf{t}} \;,
\end{aligned}
\end{equation}
provided that $ \eta_{0} \leqslant 1/2 $.
Next, in order to deal with the first term in~\eqref{e:small:nl}, we show
that there exists a constant $ C> 0 $ such that
\begin{equation*}
\begin{aligned}
\EE_{\tau_{i}}\left[  \left(  \min_{x \in \TT^{d}} \u (\tau_{i+1}-, x)
\right)^{- 2 \eta } 1_{\{ \tau_{i +1} < \tau_{i} + \mf{t} \}} 
\right]^{\frac{1}{2}} \leqslant C  \left( \min_{x \in \TT^{d}} \u (\tau_{i},
x) \right)^{- \eta}  \;.
\end{aligned}
\end{equation*}
For this it is sufficient to show that for any $ \eta > 0 $
\begin{equation*}
\begin{aligned}
\EE_{\tau_{i}} & \left[ \left(  \min_{x \in \TT^{d}}  \Psi_{\tau_{i}, \tau_{i+1}}
[\u] (x)  \right)^{- \eta } 1_{\{ \tau_{i+1} < \tau_{i} + \mf{t} \}}
\right] \\
& \leqslant C(\eta) \EE_{\tau_{i}} \left[ \left(  \min_{x \in \TT^{d}}  \Phi_{\tau_{i}, \tau_{i+1}}
[\u] (x)  \right)^{- \eta } 1_{\{ \tau_{i+1} < \tau_{i} + \mf{t} \}}
\right] \leqslant \overline{C}(\eta) \;, 
\end{aligned}
\end{equation*}
for all $ \u $ such that $  \min_{x
\in \TT^{d}} \u(x) \geqslant 1 $. The first bound is a consequence of
Lemma~\ref{lem:comp-phi-psi}. The last bound was shown in
Proposition~\ref{prop:intermediate} and this completes the proof.
\end{proof}
The next step is to establish an analogous statement to Proposition~\ref{prop:unif}. This immediately follows using that $\Psi_{s,t}[\u]\geqslant \frac{1}{2}\Phi_{s,t}[\u]$ for all $s\leqslant t\leqslant \tau^{\text{tot}}(s,u)\leqslant s +\mf{t}$. 

\begin{lemma}\label{prop:unif:nl}
There exist constants ${\eta}_{0},c >0$ such that for all
$\eta\in(0,{\eta}_0)$ and $ \mf{t}> 1 $, and for any $  M>1$ and $\delta,
\ve_{1}\in(0,1)$ satisfying \eqref{e:dtm}, and for all $ i \in \NN $ we have
\begin{equation*}
\begin{aligned}
\EE_{\tau_{i}} \left[ \sup_{\tau_{i} \leqslant t < \tau_{i+1}} \left( \min_{x \in \TT^{d}} \u (
t, x) \right)^{- \eta  } \right] \leqslant  e^{{c}\mf{t}}\left( \min_{x \in \TT^{d}} \u (
\tau_{i}, x) \right)^{- \eta} \;.
\end{aligned}
\end{equation*}
\end{lemma}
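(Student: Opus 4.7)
The plan is to reduce Lemma~\ref{prop:unif:nl} directly to its linear counterpart, Proposition~\ref{prop:unif}, via the pathwise lower bound $ \Psi \geqslant \tfrac{1}{2} \Phi $ supplied by Lemma~\ref{lem:comp-phi-psi}. The comment preceding the statement already signals precisely this route, so the proof will be essentially an accounting exercise.

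First, I would fix $ i \in \NN $ and, by the strong Markov property, assume without loss of generality that $ \tau_{i} = 0 $. Since the parameters $ M, \delta, \ve_{1} $ are assumed to satisfy \eqref{e:dtm}, and since by construction in Definition~\ref{def:flow:nl} one has $ \tau_{i+1} \leqslant \tau^{\mathrm{tot}}(\tau_{i}, \u_{\tau_{i}}) \leqslant \tau_{i} + \mf{t} $, Lemma~\ref{lem:comp-phi-psi} provides the pointwise lower bound
$$
\u_{t}(x) = \Psi_{\tau_{i}, t}[\u_{\tau_{i}}](x) \geqslant \tfrac{1}{2} \Phi_{\tau_{i}, t}[\u_{\tau_{i}}](x) = \tfrac{1}{2} w_{t}(x) \;, \qquad \forall \, \tau_{i} \leqslant t < \tau_{i+1} \;, \quad x \in \TT^{d} \;.
$$
Raising both sides to the power $ -\eta $ and taking the supremum in time yields
$$
\sup_{\tau_{i} \leqslant t < \tau_{i+1}} \Big( \min_{x \in \TT^{d}} \u(t,x) \Big)^{-\eta} \leqslant 2^{\eta} \sup_{\tau_{i} \leqslant t < \tau_{i+1}} \Big( \min_{x \in \TT^{d}} w(t,x) \Big)^{-\eta} \;.
$$

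Second, I would take conditional expectations and invoke Proposition~\ref{prop:unif}. Although that proposition is stated for the flow $ \Xi $, its proof only exploits iterated pointwise bounds of the form $ \min_{x} \Xi_{0,t}[v] \geqslant c_{K}(s,t) \min_{x} \Xi_{0,s}[v] $ together with the estimate of Lemma~\ref{lem:c-bd} (which the paper states for $ \Phi $ itself). Consequently the argument transfers verbatim to the linear flow $ \Phi $ driving $ w $. Crucially, neither ingredient depends on the stopping-time parameters $ M, \delta, \ve_{1} $, so the resulting constants $ \eta_{0}, c' > 0 $ are uniform in those parameters, giving
$$
\EE_{\tau_{i}} \Big[ \sup_{\tau_{i} \leqslant t < \tau_{i+1}} \big( \min_{x \in \TT^{d}} w(t,x) \big)^{-\eta} \Big] \leqslant e^{c' \mf{t}} \big( \min_{x \in \TT^{d}} \u(\tau_{i}, x) \big)^{-\eta} \;.
$$

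Combining the two steps yields the claim with $ c > 0 $ chosen large enough to absorb the prefactor $ 2^{\eta_{0}} $ into the exponential. I do not anticipate any genuine obstacle here: the substantive comparison between the nonlinear and linear flows was already carried out in Lemma~\ref{lem:comp-phi-psi}, and the negative-moment control for the linear flow was already established in Proposition~\ref{prop:unif}. The only minor point worth double-checking is that the threshold $ \eta_{0} $ inherited from Proposition~\ref{prop:unif} is uniform in the auxiliary parameters, which is automatic since it depends solely on Lemma~\ref{lem:c-bd} for the kernel of $ \Phi $.
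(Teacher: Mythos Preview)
Your proposal is correct and follows essentially the same route as the paper: reduce to $\tau_{i}=0$, apply the pathwise lower bound $\Psi \geqslant \tfrac{1}{2}\Phi$ from Lemma~\ref{lem:comp-phi-psi}, pick up the factor $2^{\eta}$, and conclude by invoking the argument of Proposition~\ref{prop:unif} for the linear flow $\Phi$, noting that $2^{\eta}$ is uniformly bounded for $\eta \leqslant \eta_{0}$. Your remark that Proposition~\ref{prop:unif} is stated for $\Xi$ but transfers verbatim to $\Phi$ via Lemma~\ref{lem:c-bd} is exactly the observation the paper relies on implicitly.
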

\begin{proof}
Without loss of generality, let us assume that $\tau_i=0$. Then, through
Lemma~\ref{lem:comp-phi-psi}, we find that 
\begin{align*}
\min_{x\in\TT^d} \Psi_{0,t}[\u](x) \geqslant \frac{1}{2}\min_{x\in\TT^d}
\Phi_{0,t}[\u](x) \;.
\end{align*}
Hence we obtain
\begin{equs}
\EE_{\tau_{i}} \left[ \sup_{\tau_{i} \leqslant t < \tau_{i+1}} \left( \min_{x \in \TT^{d}} \u (
t, x) \right)^{- \eta  } \right] \leqslant 2^{\eta} \EE_{\tau_{i}} \left[
\sup_{\tau_{i} \leqslant t < \tau_{i+1}} \left( 
\Phi_{0,t}[\u](x) \right)^{- \eta  } \right] \;.
\end{equs}
Now the upper bound on the right hand-side follows analogously to 
Proposition~\ref{prop:unif} (note that $ 2^{\eta} $ is bounded uniformly over
$ \eta \leqslant \eta_{0} $).
\end{proof}

\subsection{Discretisations of Lyapunov functionals}

The following result provide an general procedure to construct Lyapunov
functionals from certain discretised counterparts.

\begin{lemma}\label{lem:discretisations}
Let $ (X_{t})_{t \geqslant 0} $ be a continuous-time Markov process with values
in a Polish state space $ \mX $. Assume that
$ \{ \tau_{i} \}_{i \in \NN} $ is a sequence of stopping times with
\begin{equs}
\tau_{0} = 0\;, \qquad \tau_{i} \leqslant  \tau_{i+1} \;, \qquad \lim_{i \to
\infty} \tau_{i} = \infty \;, \qquad \PP-\text{almost surely,}
\end{equs}
and define $ Y_{i} = X_{\tau_{i}} $ and set $ A_{i ,t} = \{ t \in [
\tau_{i}, \tau_{i+1}) \} $. Further, let $ F \colon \mX \to [0, \infty]
$ be a functional, and assume that the following are satisfied for some
constants $ \widetilde{c} \in (0, 1)$ and $ \widetilde{C}_{2},
\widetilde{C}_{3} > 0$:
\begin{equs}
\EE_{\tau_{i}}[ F(Y_{i +1})] & \leqslant \widetilde{c} F(Y_{i}) + \widetilde{C}_{2} \;,\\
\EE_{\tau_{i}} \left[ \sup_{ \tau_{i}  \leqslant s \leqslant \tau_{i+1} }
F(X_{s}) \right] &\leqslant \widetilde{C}_{3} F(Y_{i}) \;.
\end{equs}
Then 
\begin{equs}
\EE \left[\sqrt{F}  (X_{t}) \right] \leqslant c \sqrt{F}
 (X_{0}) + C \;,
\end{equs} 
where
\begin{equs}
c = \widetilde{C}_{3} \sum_{i
\in \NN} \PP (A_{i , t})^{\frac{1}{2}} \tilde{c}^{i} \;, \qquad C = \widetilde{C}_{2}
\widetilde{C}_{3} \frac{1}{1 - \tilde{c}} \left( \sum_{i \in \NN} \PP (A_{i ,
t})^{\frac{1}{2}} \right) \;.
\end{equs}
\end{lemma}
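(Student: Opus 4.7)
The plan is to decompose the deterministic-time expectation along the partition of $\Omega$ given by the stopping intervals and then reduce to iterates of the discrete Lyapunov inequality. Since $\tau_i \uparrow \infty$ almost surely, the events $(A_{i,t})_{i\in\NN}$ form an almost sure partition, so I would first write
\begin{equs}
\EE\bigl[\sqrt{F(X_t)}\bigr] = \sum_{i \in \NN} \EE\bigl[\sqrt{F(X_t)}\, \mathbf{1}_{A_{i,t}}\bigr] \;.
\end{equs}
The key move is to apply Cauchy--Schwarz to each summand in order to peel off the indicator:
\begin{equs}
\EE\bigl[\sqrt{F(X_t)}\, \mathbf{1}_{A_{i,t}}\bigr] \leqslant \sqrt{\EE[F(X_t)\, \mathbf{1}_{A_{i,t}}]} \cdot \sqrt{\PP(A_{i,t})} \;.
\end{equs}

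Next I would bound the first factor using the supremum hypothesis. On $A_{i,t}$ the deterministic time $t$ lies in $[\tau_i, \tau_{i+1})$, so $F(X_t)\, \mathbf{1}_{A_{i,t}} \leqslant \sup_{\tau_i \leqslant s \leqslant \tau_{i+1}} F(X_s)$; taking expectations, using the tower property and invoking the second hypothesis gives
\begin{equs}
\EE[F(X_t)\, \mathbf{1}_{A_{i,t}}] \leqslant \EE\Bigl[\EE_{\tau_i}\bigl[\textstyle\sup_{\tau_i \leqslant s \leqslant \tau_{i+1}} F(X_s)\bigr]\Bigr] \leqslant \widetilde C_3\, \EE[F(Y_i)] \;.
\end{equs}
A routine induction on $i$ from the first hypothesis then yields the geometric bound $\EE[F(Y_i)] \leqslant \widetilde c^{\,i} F(X_0) + \widetilde C_2/(1 - \widetilde c)$, combining the contraction in $\widetilde c$ and the geometric series in the additive term.

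Finally I would substitute these estimates back in, use $\sqrt{a+b} \leqslant \sqrt a + \sqrt b$ to separate the contributions of the initial datum and of the constant term, and sum over $i \in \NN$. Collecting factors yields the announced bound $\EE[\sqrt{F(X_t)}] \leqslant c \sqrt{F(X_0)} + C$ with $c$ and $C$ of the advertised form. I do not foresee any analytic difficulty; the only point requiring attention is the measurability of $A_{i,t}$, which is $\mF_t$-measurable but in general not $\mF_{\tau_i}$-measurable, so one cannot eliminate the indicator by conditioning on $\mF_{\tau_i}$. This is precisely why Cauchy--Schwarz must be applied \emph{before} conditioning: it trades $\mathbf{1}_{A_{i,t}}$ for the scalar factor $\sqrt{\PP(A_{i,t})}$, leaving a factor of $F(X_t)\mathbf{1}_{A_{i,t}}$ that can be dominated by the $\mF_{\tau_i}$-conditional supremum.
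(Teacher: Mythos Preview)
Your proposal is correct and follows essentially the same route as the paper: partition over the events $A_{i,t}$, apply Cauchy--Schwarz termwise, dominate $F(X_t)\mathbf{1}_{A_{i,t}}$ by the conditional supremum, and iterate the discrete Lyapunov inequality. One small caveat (which the paper's own proof shares): carrying the square root through carefully yields factors $\sqrt{\widetilde C_3}\,\tilde c^{\,i/2}$ and $\sqrt{\widetilde C_2\widetilde C_3/(1-\tilde c)}$ rather than the exact constants displayed in the statement, but this is harmless for the applications and your argument is otherwise complete.
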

Note that in principle we allow for $ c , C = \infty $, and in particular we do
not claim that $ c \in (0, 1) $.

\begin{proof}
We write $ A_{i,t} = \{ t \in [\tau_{i}, \tau_{i+1})\}  $, so that by
Cauchy--Schwarz
\begin{equs}
\EE \left[ \sqrt{F} (X_{t}) \right] = \sum_{i \in \NN} \EE
 \left[ 1_{A_{i, t}} \sqrt{F} (X_{t})  \right] 
\leqslant \sum_{i \in \NN} \PP (A_{i , t})^{\frac{1}{2}} \EE
 \left[1_{A_{i, t}} F (X_{t}) \right]^{\frac{1}{2}} \;.
\end{equs}
Now we can estimate
\begin{equs}
 \EE \left[1_{A_{i, t}} F (X_{t}) \right] \leqslant \widetilde{C}_{3} \EE
\left[ F (Y_{i}) \right] \leqslant \widetilde{C}_{2} \widetilde{C}_{3} \sum_{j
= 0}^{i -1 } \tilde{c}^{j} + \widetilde{C}_{3} \tilde{c}^{i} F(X_{0}) \;.
\end{equs}
Plugging this into the previous estimate we have found that
\begin{equs}
\EE \left[ \sqrt{F} (X_{t}) \right] \leqslant F^{\frac{1}{2}} (X_{0})  \left(
\widetilde{C}_{3} \sum_{i
\in \NN} \PP (A_{i , t})^{\frac{1}{2}} \tilde{c}^{i} \right) +
\widetilde{C}_{3} \widetilde{C}_{2}
\frac{1}{1 - \tilde{c}} \left( \sum_{i \in \NN} \PP (A_{i ,
t})^{\frac{1}{2}} \right)  \;,
\end{equs}
which implies the desired result.
\end{proof}

\section{Projective dynamics and corrector estimates}\label{sec:invariant}
In this section we study the projective process $(\pi_t)_{t \geqslant 0} $,
where $ \pi_{t} = \Phi_{0, t} [w] / \| \Phi_{0, t} [w] \|_{L^{1}} $, where $ \Phi
$ is the flow in \eqref{e:Phi} and $ \pi_{0} = w / \| w \|_{L^{1}} $ an arbitrary
initial condition. This process takes values in the projective space
\begin{equs}
\mathbf{P} = \left\{ \varphi \in C ( \TT^{d}; (0, \infty))\;, \ \text{ such
that } \ \int_{\TT^{d}} \varphi(x) \ud x = 1 \right\},
\end{equs}
which is a complete metric space when endowed with Hilbert's projective metric
\begin{equs}[e:hilbert]
d_{\mathbf{P}}(\varphi, \psi) = \log{ \max_{x \in \TT^d} \left( \frac{\varphi
(x) }{\psi (x)} \right)} - \log{ \min_{x \in \TT^d} \left( \frac{\varphi
(x)}{\psi(x)} \right)} \;.
\end{equs}
The fact that $ (\mathbf{P}, d_{\mathbf{P}}) $ is complete follows for example
from the inequalities
\begin{equs}
\| \log{\varphi} - \log{\psi}\|_{\infty} \leqslant d_{\mathbf{P}}(\varphi,
\psi) \leqslant 2 \| \log{\varphi} - \log{\psi} \|_{\infty}\;.
\end{equs}
We start by observing that $ (\pi_{t})_{t \geqslant 0} $ is a Markov process.
This is a consequence of the linearity of Equation~\ref{e:Phi}. A hands-on way
to see the Markov property is by observing that $ \pi $ is itself the solution
to an SPDE (which due to a non-local term is hard to solve in general). To
obtain the SPDE representation for $ \pi_{t} $ we can proceed through It\^o's
formula, to obtain:
\begin{equs}
\partial_{t} \pi_{t} (x)& = \Delta \pi_{t} + Q( \dot{W}, \pi_{t}) \pi_{t} +
\frac{u_{t}}{r_{t}^{3}}  \ud \langle r \rangle_{t} - \frac{1}{
r_{t}^{2}} \ud \langle u(x) , r \rangle_{t} \\
& =  \Delta \pi_{t}(x) + Q( \dot{W}, \pi_{t})(x) \pi_{t}(x) +
\pi_{t}(x)  \int_{\TT^{d} \times \TT^{d}} \pi_{t}(y)
\pi_{t}(z) \kappa(y, z) \ud y \ud z\\
& \qquad \qquad \qquad \qquad \qquad \qquad \qquad - \pi_{t}(x) \int_{\TT^{d}}
\pi_{t}(y) \kappa(x, y) \ud y \;.
\end{equs}
where $Q(\dot{W},\pi_t)(x):=\dot{W}(t,x) -\int_{\TT^d} \pi_t(y) \dot{W}(t,y)\ud y$.  Denoting with
\begin{equs}
\kappa \ast \pi_{t} (x) = \int_{\TT^{d}} \pi_{t} (y) \kappa(x, y) \ud x \;,
\end{equs}
we can rewrite the equation above as
\begin{equs}[eqn:angular]
\partial_{t} \pi_{t} (x) = \Delta \pi_{t}(x) + Q( \dot{W}, \pi_{t})(x) \pi_{t}(x) -
Q( \kappa \ast \pi_{t}, \pi_{t}) (x) \pi_{t}(x) \;,
\end{equs}
so that the Markov property promptly follows.

Now, our aim in this section is to build on the ergodic properties of $
(\pi_{t})_{t \geqslant 0} $ to construct the
corrector $G$ appearing in \eqref{e:G}. To this aim, let us write
\begin{equs}
t\mapsto \mP_t\;, \qquad t \mapsto \mP_t^* \;,
\end{equs}
for respectively the Markov semigroup associated to the process $\pi_t$ and its
dual. Namely, $\mP_t \colon M_b(\mathbf{P}) \mapsto M_b(\mathbf{P})$ acts on
measurable and bounded functionals on $ \mathbf{P} $ and $\mP_t^* \colon
\mM(\mathbf{P}) \to \mM (\mathbf{P})$ acts on positive measures over $
\mathbf{P} $.
Next, let $ \mL $ denote the generator of $ \pi_{t} $, when viewed as a Markov
process on $ \mathbf{P} $. Before we proceed to the technical aspects of our
analysis of $ (\pi_{t})_{t \geqslant 0} $ as a Markov process, let us provide a
road-map of the results that we would like to obtain.
Our first aim is to solve the following Poisson equation,
and study properties of its solution:
\[ \mathcal{L}G (\pi) = \gamma - \lambda -\frac{1
}{2} \int_{\TT^d \times
   \TT^d} \pi (x) \pi (y) \kappa (x, y) \ud x \ud y =
F (\pi) \;, \qquad \mu_{\infty}(G) = 0 \;, \]
where $ \mu_{\infty} $ is the unique invariant law of $ \pi_{t} $ on $
\mathbf{P} $, whose existence we will be proven in
Subsection~\ref{subsec:projective}. This equation is solvable
since $ \gamma $ and $ \lambda $ are connected by \eqref{e:fk}, which means
that $\mu_{\infty} (F) = 0$ by construction. Therefore, provided that we can prove a spectral
gap for $ \mP_{t} $, which is the content of Subsection~\ref{subsec:was}, we will be able to define
\[ G (\pi) = \int_0^{\infty} \mathcal{P}_t F (\pi) \ud t \;, \]
which is equivalent to
\begin{equation}\label{e:def-G}
  \langle G, \mu \rangle = \int_0^{\infty} \langle F, \mathcal{P}_t^{\ast} \mu
  \rangle \ud t = \int_0^{\infty} \langle F, (\mathcal{P}_t^{\ast} \mu -
  \mu_{\infty}) \rangle \ud t\;.
\end{equation}
Then our final goal will be to provide some estimates on $ G $. In particular,
that it is uniformly bounded, meaning that $ \sup_{\pi \in
\mathbf{P}} G(\pi) < \infty  $.

To obtain these results we first study the dynamics of $ (\pi_{t})_{t \geqslant
0} $ as a random dynamical system. A version of the Krein--Rutman theorem will
allow us to deduce that $ (\pi_{t})_{t \geqslant 0} $ has a unique invariant
measure, and that sample paths tend to synchronise along this invariant
solution. In a second section we the analyse the spectral gap of $
\mP_{t} $ with respect to a cut-off Wasserstein distance in the spirit of
\cite{HairerMattinglScheutzow11}, and we show that this is
sufficient to deduce all the required bounds on $ G $: this will be the content
of Subsection~\ref{subsec:bounds-corrector}.

\subsection{Synchronisation for the projective process}\label{subsec:projective}
The aim of this subsection is to review some classical results concerning the
metric space $ (\mathbf{P}, d_{\mathbf{P}}) $ and its relation to Lyapunov
exponents via the Krein--Rutman theorem, which we state here in the form of
Birkhoff's contraction principle for strictly positive maps.
Here and in the following, from a linear operator $ A $ on $ C(\TT^{d}; [0,
\infty)) $ such that $ A \varphi = 0 \iff \varphi =0 $, we construct a
(nonlinear) operator $ A^{\pr} $ on the projective space, defined by
\begin{equs}[e:proj-op]
A^{\pr} \varphi = \frac{\varphi}{\| \varphi \|_{L^{1}}} \;.
\end{equs}
For a proof of the next result see
\cite[Section 6]{Bushell1973}.
\begin{theorem}[Birkhoff's contraction principle]\label{thm:contraction}
Let $ A $ be a bounded linear operator on $ C(\TT^{d} ;[0, \infty)) $, i.e.\ $
A \in \mL(C( \TT^{d}; [0, \infty) )) $, such that $ A $ is defined by
\begin{equs}
A ( \varphi)(x) = \int_{\TT^{d}} K(x, y) \varphi(y) \ud y \;, \qquad \forall x \in 
\TT^{d}\;,
\end{equs}
with $ K \in C (\TT^{d} \times \TT^{d}) $ and $ K (x, y) > 0 \, \forall x,y \in
\TT^{d} $. Then there exists a constant $ \tau(A) \in [0, 1) $ such that
\begin{equs}
d_{\mathbf{P}}( A^{\pr} \varphi , A^{\pr} \psi ) \leqslant \tau(A) \,
d_{\mathbf{P}}(\varphi, \psi)\;.
\end{equs}
\end{theorem}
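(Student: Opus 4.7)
The plan is to follow the classical strategy of Birkhoff and Bushell: reduce the contraction estimate to a finite bound on the \emph{projective diameter} of the range of $A$, and then invoke Birkhoff's inequality, which states that any positive operator whose image has finite Hilbert diameter contracts the metric at a rate governed by the hyperbolic tangent of a quarter of this diameter.

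First I would observe that $d_{\mathbf{P}}$ is invariant under multiplication by strictly positive scalars, so it suffices to prove the bound with $A$ in place of $A^{\pr}$. Using strict positivity and continuity of $K$ on the compact product $\TT^{d} \times \TT^{d}$, I would then extract constants $0 < a \leqslant b < \infty$ with $a \leqslant K(x,y) \leqslant b$ for all $x, y$. These immediately yield the two-sided bound
\begin{equs}
a \| \varphi \|_{L^{1}} \leqslant (A \varphi)(x) \leqslant b \| \varphi \|_{L^{1}}\;, \qquad \forall x \in \TT^{d}\;,\ \varphi \in C(\TT^{d};[0,\infty)) \setminus \{ 0 \}\;.
\end{equs}
Comparing $A \varphi$ to the constant function $\mathbf{1}$ via the triangle inequality for $d_{\mathbf{P}}$, this shows that the \emph{projective diameter} of the image of $A$,
\begin{equs}
\Delta(A) = \sup \{ d_{\mathbf{P}}(A \varphi, A \psi) \; \colon \; \varphi, \psi \in C(\TT^{d};(0, \infty)) \}\;,
\end{equs}
satisfies $\Delta(A) \leqslant 2 \log(b/a) < \infty$.

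The main step is then Birkhoff's inequality, which asserts that for any linear operator $A$ preserving the positive cone whose image has finite projective diameter,
\begin{equs}
d_{\mathbf{P}}(A \varphi, A \psi) \leqslant \tanh\bigl( \Delta(A) / 4 \bigr) \, d_{\mathbf{P}}(\varphi, \psi)\;.
\end{equs}
Setting $\tau(A) = \tanh(\Delta(A)/4) \in [0,1)$ then yields the desired result. The proof of this inequality is the only nontrivial step; it proceeds by writing $m \psi \leqslant \varphi \leqslant M \psi$ where $m = \min_{x}(\varphi/\psi)$ and $M = \max_{x}(\varphi/\psi)$, decomposing $\varphi = m \psi + (\varphi - m \psi)$ with $\varphi - m \psi \geqslant 0$ (and symmetrically for $M \psi - \varphi$), applying $A$ to obtain a convex-combination representation of $A \varphi$ in terms of $A \psi$, and then bounding the cross-ratio via a hyperbolic change of variables. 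This is carried out in detail in~\cite[Section~6]{Bushell1973}, which we cite.

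The principal obstacle is thus the Birkhoff inequality itself, whose content is combinatorial/analytic via the cross-ratio computation rather than specific to our infinite-dimensional setting. All the structural ingredients of our problem (positivity, continuity of $K$, compactness of $\TT^{d}$) enter only in verifying that $\Delta(A)$ is finite, which follows immediately from the uniform two-sided bound above.
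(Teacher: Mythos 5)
The paper gives no proof of this theorem and simply refers to \cite[Section~6]{Bushell1973}; your sketch is precisely the Birkhoff--Bushell argument that reference contains (scale invariance of $d_{\mathbf{P}}$ to drop the normalisation, uniform two-sided bounds on $K$ from compactness and strict positivity to get finite projective diameter $\Delta(A)\leqslant 2\log(b/a)$, then Birkhoff's inequality $\tau(A)=\tanh(\Delta(A)/4)<1$), and you correctly defer the cross-ratio computation to the same source. Your proposal is correct and matches the paper's (cited) approach.
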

As a corollary of this result, we obtain that the projective flow of the linear equation
\eqref{e:Phi} admits a unique invariant solution in $ \mathbf{P}  $, to which
all positive solutions synchronise with exponential speed.
\begin{corollary}\label{cor:syncrho}
Under Assumption~\ref{assu:nonlinearities}, there exists a random, adapted
initial condition $ \pi_{0}^{\infty} \in \mathbf{P} $ such that the following
hold:
\begin{enumerate}
\item \textbf{(Stationarity)} The projective process is stationary started in $ \pi_{0}^{\infty} $,
namely $ \Phi^{\pr}_{t} \pi^{\infty}_{0}  \overset{d}{=}
\pi^{\infty}_{0} $.
\item \textbf{(Synchronisation)} There exists a deterministic $ \alpha > 0 $
such that for any adapted initial condition $ \pi_{0} \in \mathbf{P}  $ the
flow $ \Phi^{\pr} \pi_{0} $ syncrhonises to the invariant solution:
\begin{equs}
\limsup_{t \to \infty} \frac{1}{t} \log{\left(d_{\mathbf{P}} (\Phi^{\pr}_{t}
\pi^{\infty}_{0}, \Phi^{\pr}_{t} \pi_{0}) \right)} \leqslant - \alpha \;.
\end{equs}
\end{enumerate}
\end{corollary}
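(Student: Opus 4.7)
The plan is to combine Birkhoff's contraction principle (Theorem~\ref{thm:contraction}) with the two-sided structure of the driving noise to pull back the projective flow from the remote past. First, for every $s \leqslant t \in \mathbb{R}$, the random linear operator $\Phi_{s,t}$ has a strictly positive integral kernel $K_{s,t}(x,y)$ (this is guaranteed by the representations established in Section~\ref{sec:lbds}, together with a Girsanov-type argument, and in particular the lower bounds on fundamental solutions yield $K_{s,t} > 0$ almost surely). Consequently Theorem~\ref{thm:contraction} applies and the projected operator $\Phi^{\pr}_{s,t}$ is a strict $d_{\mathbf{P}}$-contraction with contraction constant $\tau_{s,t} := \tau(\Phi_{s,t}) \in [0,1)$. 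By the flow property $\Phi_{s,u} = \Phi_{t,u} \circ \Phi_{s,t}$ and the multiplicativity of Birkhoff's coefficient, $\tau_{s,u} \leqslant \tau_{s,t}\, \tau_{t,u}$. Stationarity of the noise in time makes $\{\tau_{i-1,i}\}_{i \in \ZZ}$ a stationary ergodic sequence.

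The main obstacle is to show that $\EE[\log \tau_{0,1}] < 0$, i.e.\ that one gets genuine contraction on average. This is where we invoke the quantitative heat kernel bounds of Section~\ref{sec:lbds}: these produce almost-sure two-sided bounds $c_1(\omega) \leqslant K_{0,1}(x,y) \leqslant c_2(\omega)$ with finite ratio, and Birkhoff's explicit formula $\tau_{0,1} = \tanh\bigl(\tfrac14 \Delta_{0,1}\bigr)$ with $\Delta_{0,1} = \log \sup_{x,y,x',y'} \frac{K_{0,1}(x,y) K_{0,1}(x',y')}{K_{0,1}(x,y') K_{0,1}(x',y)}$ shows $\tau_{0,1} \leqslant \bar{\tau}(\omega) < 1$ almost surely, hence $\EE[\log \tau_{0,1}] \leqslant \log \EE[\tau_{0,1}] < 0$. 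Setting $\alpha := -\EE[\log \tau_{0,1}] > 0$, Birkhoff's ergodic theorem yields
\begin{equs}
\lim_{n \to \infty} \frac{1}{n} \log \prod_{i=1}^{n} \tau_{i-1,i} = -\alpha \qquad \PP\text{-a.s.}
\end{equs}

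With this in hand, I construct $\pi_0^\infty$ as the pull-back limit. Fix any deterministic $\pi \in \mathbf{P}$ and define $\pi_0^{(n)} := \Phi^{\pr}_{-n,0}\pi$. Since $d_{\mathbf{P}}$ is bounded on the image of any $\Phi^{\pr}_{s,t}$ (uniformly over initial conditions, by compactness of $\TT^d$ and strict positivity of the kernel), for $m < n$
\begin{equs}
d_{\mathbf{P}}(\pi_0^{(n)}, \pi_0^{(m)}) \leqslant \left(\prod_{i=-m+1}^{0} \tau_{i-1,i}\right) d_{\mathbf{P}}\bigl(\Phi^{\pr}_{-n,-m}\pi, \pi\bigr) \leqslant D(\omega)\prod_{i=-m+1}^{0} \tau_{i-1,i},
\end{equs}
where $D(\omega) := \sup_{\varphi,\psi \in \mathrm{Im}(\Phi^{\pr}_{-1,0})} d_{\mathbf{P}}(\varphi,\psi) < \infty$ almost surely. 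By the ergodic theorem above the right-hand side tends to zero $\PP$-almost surely, so $\pi_0^{(n)}$ is Cauchy in the complete space $(\mathbf{P}, d_{\mathbf{P}})$ and converges to a limit $\pi_0^\infty$. This limit is $\mF_0$-adapted (being a limit of $\mF_0$-measurable variables) and independent of the choice of $\pi$, since any two choices give sequences whose mutual $d_{\mathbf{P}}$-distance is bounded by $D(\omega)\prod_{i=-n+1}^{0}\tau_{i-1,i} \to 0$. Stationarity then follows: $\Phi^{\pr}_{0,t}\pi_0^\infty = \lim_{n\to\infty} \Phi^{\pr}_{-n, t}\pi$, which by stationarity of the noise has the same law as $\lim_n \Phi^{\pr}_{-n-t, 0}\pi = \pi_0^\infty$.

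Finally, synchronisation is immediate from the same contraction estimate. For any adapted $\pi_0 \in \mathbf{P}$,
\begin{equs}
d_{\mathbf{P}}(\Phi^{\pr}_{0,t}\pi_0, \Phi^{\pr}_{0,t}\pi_0^\infty) \leqslant \left(\prod_{i=1}^{\lfloor t \rfloor} \tau_{i-1,i}\right) d_{\mathbf{P}}(\pi_0, \pi_0^\infty),
\end{equs}
and $d_{\mathbf{P}}(\pi_0,\pi_0^\infty) < \infty$ almost surely since both live in $\mathbf{P}$ on the compact torus. Taking $\log$, dividing by $t$ and invoking the ergodic theorem yields $\limsup_{t\to\infty} t^{-1} \log d_{\mathbf{P}}(\Phi^{\pr}_{0,t}\pi^\infty_0, \Phi^{\pr}_{0,t}\pi_0) \leqslant -\alpha$, which is exactly the synchronisation statement with $\alpha = -\EE[\log \tau_{0,1}]$.
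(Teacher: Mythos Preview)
Your approach is essentially the standard pull-back argument for contracting random dynamical systems, which is exactly what the paper's cited reference \cite[Theorem~4.3]{Rosati22Synchro} carries out, so strategically you are on the right track. There is, however, a genuine slip in the Cauchy estimate. You write
\[
d_{\mathbf{P}}(\pi_0^{(n)}, \pi_0^{(m)}) \leqslant \Bigl(\prod_{i=-m+1}^{0} \tau_{i-1,i}\Bigr)\, d_{\mathbf{P}}\bigl(\Phi^{\pr}_{-n,-m}\pi,\, \pi\bigr) \leqslant D(\omega)\prod_{i=-m+1}^{0} \tau_{i-1,i}\;,
\]
with $D(\omega)$ the diameter of $\mathrm{Im}(\Phi^{\pr}_{-1,0})$. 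But neither $\Phi^{\pr}_{-n,-m}\pi$ nor the fixed deterministic $\pi$ lies in $\mathrm{Im}(\Phi^{\pr}_{-1,0})$, so there is no reason for $d_{\mathbf{P}}(\Phi^{\pr}_{-n,-m}\pi,\pi)$ to be bounded by $D(\omega)$; in fact this distance can be arbitrarily large as $m,n$ vary. The correct version is to factor one more step: for $n>m\geqslant 1$ both $\Phi^{\pr}_{-n,-m+1}\pi$ and $\Phi^{\pr}_{-m,-m+1}\pi$ lie in $\mathrm{Im}(\Phi^{\pr}_{-m,-m+1})$, whose projective diameter we denote $D_m$, giving
\[
d_{\mathbf{P}}(\pi_0^{(n)}, \pi_0^{(m)}) \leqslant \tau(\Phi_{-m+1,0})\, D_m \leqslant D_m \prod_{i=-m+2}^{0}\tau_{i-1,i}\;.
\]
Now the $D_m$ are i.i.d.\ with the same law as $D_0<\infty$, so one still needs $\tfrac{1}{m}\log D_m \to 0$ almost surely to conclude; this holds provided $\EE[\log^+ D_0]<\infty$, which does follow from the heat kernel bounds of Section~\ref{sec:lbds} (indeed Lemma~\ref{lem:c-bd} controls both $\min K$ and $\max K$), but you should say so explicitly. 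With that correction the rest of your argument --- stationarity via the shift of the two-sided noise, and synchronisation via the forward product of contraction constants --- goes through as written.
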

The proof of this result is an application of Theorem~\ref{thm:contraction},
and can be found for example in \cite[Theorem 4.3]{Rosati22Synchro}.
Finally, for later use, it will be convenient to also recall the following bound of the
$ L^{1} $ distance in terms of Hilbert's metric
(see for example \cite[Theorem 4.1]{Bushell1973}).
\begin{lemma}\label{lem:bush}
Let $\pi,\nu\in \mathbf{P}$. Then we can bound
\begin{equs}
\| \pi - \nu \|_{L^1} \leqslant \exp (d_{\mathbf{P}} (\pi, \nu)) -1 \;.
\end{equs}
\end{lemma}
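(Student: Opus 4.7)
The plan is elementary and works with the pointwise ratio $r(x) = \pi(x)/\nu(x)$. Setting $M = \max_{x \in \TT^d} r(x)$ and $m = \min_{x \in \TT^d} r(x)$, the definition \eqref{e:hilbert} of Hilbert's metric gives $d_{\mathbf{P}}(\pi, \nu) = \log(M/m)$, so the target inequality is equivalent to showing
\begin{equs}
\|\pi - \nu\|_{L^1} \leqslant \frac{M}{m} - 1 \;.
\end{equs}

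First I would exploit that $\pi$ and $\nu$ both belong to $\mathbf{P}$, so they are probability densities on $\TT^d$. The identity $\int (\pi - \nu) \ud x = 0$ rewrites as $\int (r-1)\nu \ud x = 0$, and since $\nu > 0$ and $m \leqslant r(x) \leqslant M$ pointwise, this forces $m \leqslant 1 \leqslant M$. This normalisation is the only real content used in the proof.

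Next I would compute $\|\pi - \nu\|_{L^1} = \int |r - 1| \nu \ud x$. Since $r(x) \in [m, M]$ and $1 \in [m, M]$, one has the pointwise bound $|r(x) - 1| \leqslant M - m$, and using $\int \nu \ud x = 1$ yields $\|\pi - \nu\|_{L^1} \leqslant M - m$. Finally, $m \leqslant 1$ gives $M - m \leqslant (M-m)/m = M/m - 1$, which closes the chain.

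There is no real obstacle: the argument is a short sequence of elementary estimates, and the only place where any structure of $\mathbf{P}$ is used (beyond positivity) is the probability normalisation, which pins down the location of $1$ inside the interval $[m, M]$ and thereby controls $|r - 1|$ by $M - m$.
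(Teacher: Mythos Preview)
Your proof is correct and follows essentially the same route as the paper: both arguments write $|\pi-\nu| = |r-1|\nu$ with $r=\pi/\nu$, use the normalisation to place $1 \in [m,M]$, bound $|r-1| \leqslant M-m$, and then use $m \leqslant 1$ to get $M-m \leqslant M/m-1 = \exp(d_{\mathbf{P}}(\pi,\nu))-1$. You are in fact slightly more explicit than the paper about why $m \leqslant 1 \leqslant M$, which is the step that makes the final inequality $M-m \leqslant M/m-1$ go through.
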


\begin{proof}
We have that
\begin{equs}
\int_{\TT^d} |\pi (x) - \nu (x)| \ud x  & \leqslant \max \left\{ \left( \max_{x \in \TT^d} \frac{\pi (x)}{\nu(x)}  - 1 \right)  , - \left( \min_{x \in \TT^d} \frac{\pi(x)}{\nu(x)}-1\right) \right\} \int_{\TT^d} \nu(x) \ud x \\
& \leqslant \max_{x \in \TT^d} \frac{\pi(x)}{\nu(x)}- \min_{x \in \TT^d} \frac{\pi(x)}{\nu(x)}  \leqslant \exp (d_{\mathbf{P}}(\pi, \nu)) -1 \;.
\end{equs}
\end{proof}

\subsection{Contraction semigroup in Wasserstein distance}\label{subsec:was}

In this section we extend the pathwise convergence established in
Corollary~\ref{cor:syncrho} to a spectral gap in a suitable Wasserstein
distance.
As before, let $\mathcal{P}_t$ be the semigroup associated to the Markov
process $\pi_t$ (that is a linear operator on $
\mathbf{M}_{b}(\mathbf{P}) $) and $ \mP_{t}^{*} $ its dual, acting on positive
finite measures $ \mM (\mathbf{P}) $. Now, for a generic metric space $ (\mX, d) $
we write $ \mathrm{Lip}(d) $ for the space of globally Lipschitz functions
\begin{equs}[e:def-lip]
\mathrm{Lip} (d) = \left\{ \varphi \colon \mX \to \RR  \; \colon \; \| \varphi
\|_{\mathrm{Lip} (d)} = \sup_{x, y \in \mX} \frac{| \varphi (x) - \varphi (y)
|}{d(x, y)}  < \infty \right\}\;.
\end{equs}
Then we can extend Hilbert's distance to
the Kantorovich--Rubinstein (or Wasserstein with $ p=1 $) distance on the space
of measures $ \mM(\mathbf{P}) $ by setting
\[ d_{\mathbf{P}} (\mu_1, \mu_2) = \sup_{\varphi \in \mathrm{Lip}^1 (d_{\mathbf{P}})} | \mu_1
   (\varphi) - \mu_2 (\varphi) | = \inf_{\mu \in \mathcal{C} (\mu_1, \mu_2)}
   \int_{\mathbf{P} \times \mathbf{P}} d_{\mathbf{P}} (\pi_1, \pi_2) \mu
   (\ud \pi_1, \ud \pi_2) \;, \]
where the last identity follows by duality and $
\mathrm{Lip}^{1}(d_{\mathbf{P}}) = \{ \varphi
\in \mathrm{Lip}(d_{\mathbf{P}})  \; \colon \; \| \varphi
\|_{\mathrm{Lip}(d_{\mathbf{P}})} \leqslant 1 \} $.
In this setting we can readily obtain a spectral gap for $ \mP_{t}^{*} $.
\begin{lemma}\label{lem:sg-was}
There exists a $ \zeta >0 $ such that
\begin{equs}
 d_{\mathbf{P}} (\mathcal{P}_t^{\ast} \mu_1, \mathcal{P}_t^{\ast} \mu_2)
\leqslant  e^{- \zeta
   t} d_{\mathbf{P}} (\mu_1, \mu_2)\;.
\end{equs}

\end{lemma}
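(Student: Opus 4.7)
The plan is to reduce the claim to a Lipschitz contraction for the forward semigroup via Kantorovich--Rubinstein duality, then to exploit Birkhoff's contraction principle (Theorem~\ref{thm:contraction}) through a synchronous coupling. Concretely, by the dual formulation of $d_{\mathbf{P}}$ on $\mathcal{M}(\mathbf{P})$, it is enough to show that for every $\varphi\in\mathrm{Lip}(d_{\mathbf{P}})$ one has
\begin{equs}
\|\mP_t\varphi\|_{\mathrm{Lip}(d_{\mathbf{P}})} \leqslant e^{-\zeta t}\|\varphi\|_{\mathrm{Lip}(d_{\mathbf{P}})} \;,
\end{equs}
and then the bound on $d_{\mathbf{P}}(\mP_t^*\mu_1,\mP_t^*\mu_2)$ follows by integrating against any coupling of $\mu_1,\mu_2$.

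To establish the Lipschitz contraction, I would drive the projective processes started from two points $\pi_1,\pi_2\in\mathbf{P}$ with the \emph{same} realization of the noise (synchronous coupling), so that $\mP_t\varphi(\pi_i)=\EE[\varphi(\Phi_t^{\pr}\pi_i)]$ and
\begin{equs}
|\mP_t\varphi(\pi_1)-\mP_t\varphi(\pi_2)| \leqslant \|\varphi\|_{\mathrm{Lip}(d_{\mathbf{P}})}\,\EE\bigl[d_{\mathbf{P}}(\Phi_t^{\pr}\pi_1,\Phi_t^{\pr}\pi_2)\bigr]\;.
\end{equs}
Pathwise, $\Phi_t$ is a positive linear operator with an almost surely strictly positive integral kernel (the lower bound on the fundamental solution being the content of Section~\ref{sec:lbds}), so Theorem~\ref{thm:contraction} yields a random Birkhoff coefficient $\tau(\Phi_t)\in[0,1)$ with $d_{\mathbf{P}}(\Phi_t^{\pr}\pi_1,\Phi_t^{\pr}\pi_2)\leqslant \tau(\Phi_t)\,d_{\mathbf{P}}(\pi_1,\pi_2)$. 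The desired inequality then reduces to the quantitative estimate
\begin{equs}\label{e:planET}
\EE[\tau(\Phi_t)]\leqslant e^{-\zeta t}\;,
\end{equs}
for some deterministic $\zeta>0$.

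To obtain \eqref{e:planET}, I would use the cocycle identity $\Phi_{0,n}=\Phi_{n-1,n}\circ\cdots\circ\Phi_{0,1}$ together with the classical submultiplicativity of Birkhoff's contraction coefficient under composition, $\tau(AB)\leqslant\tau(A)\tau(B)$ (see \cite{Bushell1973}). Stationarity in time and independence of the noise increments imply that the factors $\{\tau(\Phi_{i-1,i})\}_{i\geqslant 1}$ are i.i.d.\ with the same law as $\tau(\Phi_{0,1})$, so $\EE[\tau(\Phi_{0,n})]\leqslant \EE[\tau(\Phi_{0,1})]^{n}$. Since $\tau(\Phi_{0,1})<1$ almost surely and $\tau\leqslant 1$ is bounded, the elementary identity ``$X\leqslant 1$ a.s.\ and $\EE[X]=1$ force $X=1$ a.s.'' gives $\EE[\tau(\Phi_{0,1})]=:e^{-\zeta}<1$. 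Interpolating for non-integer $t$ (using $\tau\leqslant 1$ on the leftover interval), this is exactly \eqref{e:planET}, and the lemma follows. The main obstacle is the pathwise input: one must be sure that Theorem~\ref{thm:contraction} really applies to $\Phi_t(\omega)$ for (a.e.) fixed $\omega$, i.e.\ that the random kernel is continuous and strictly positive; this is precisely what the heat-kernel lower bounds of Section~\ref{sec:lbds} deliver, and the rest of the argument is a short, purely deterministic/algebraic manipulation.
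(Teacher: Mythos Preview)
Your proposal is correct and follows essentially the same route as the paper: both use the synchronous coupling, apply Birkhoff's contraction (Theorem~\ref{thm:contraction}) pathwise to obtain $d_{\mathbf{P}}(\Phi_t^{\pr}\pi_1,\Phi_t^{\pr}\pi_2)\leqslant\tau(\Phi_t)\,d_{\mathbf{P}}(\pi_1,\pi_2)$, and then derive $\EE[\tau(\Phi_t)]\leqslant e^{-\zeta t}$ from submultiplicativity of $\tau$ together with independence and stationarity of the increments. The only cosmetic difference is that you phrase the reduction via the dual (Lipschitz) side of Kantorovich--Rubinstein, whereas the paper works directly with couplings; these are of course equivalent.
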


\begin{proof}
If we denote with $ \Phi^{\pr}_{t} \pi_{0} = \pi_{t} $ the projective
solution map to \eqref{e:Phi}, as defined in \eqref{e:proj-op}, then we can estimate:
\[ d_{\mathbf{P}} (\mathcal{P}_t^{\ast} \mu_1, \mathcal{P}_t^{\ast} \mu_2) \leqslant
   \int_{\mathbf{P} \times \mathbf{P}} \mathbb{E} \left[ d_{\mathbf{P}}
\left( \Phi_{t}^{\pr} \pi_1, \Phi_{t}^{\pr} \pi_2 \right) \vert \mF_{0} \right]
   \mu (\ud \pi_1 , \ud \pi_2) \;, \]
for any $\mu \in \mathcal{C} (\mu_1, \mu_2)$. Now by Birkhoff's contraction
principle, Theorem~\ref{thm:contraction}, we have that
\[ d_{\mathbf{P}} (\Phi^{\pr}_t \pi_1, \Phi^{\pr}_t \pi_2) \leqslant \tau
(\Phi^{\pr}_t) \cdot d_{\mathbf{P}} (\pi_1, \pi_2), \]
with a contraction constant $ \tau (\Phi^{\pr}_{t}) $ which is independent of
the initial condition because the noise is white in time, and such that $\tau
(\Phi^{\pr}_t) < 1$ almost surely. Since $\tau (A B)
\leqslant \tau (A) \tau (B)$ and since the increments of the operator $
\Phi^{\pr}$ are
independent we find a $\zeta > 0$ such that
\[ \mathbb{E} [\tau ( \Phi^{\pr}_t)] \leqslant e^{- \zeta t} \;, \qquad \forall
t > 0 \;. \]
Hence we can promptly deduce the required bound. 
\end{proof}
In particular, we obtain that by Lemma~\ref{lem:sg-was} the invariant measure $\mu_{\infty}$
which exists by Corollary~\ref{cor:syncrho}, satisfies:
\[ d_{\mathbf{P}} (\mathcal{P}_t^{\ast} \mu, \mu_{\infty}) \leqslant e^{- \zeta
t} d_{\mathbf{P}} (\mu, \mu_{\infty}) \;. \]
While this bound is sufficient to construct the corrector $ G $ through the
identity \eqref{e:def-G}, it is not yet enough to obtain a uniform bound on $
G $. The reason for this is that $ F $ (the right hand-side in \eqref{e:def-G})
is not globally Lipschitz with respect to the metric $ d_{\mathbf{P}} $. We
therefore prove a spectral gap also in a slightly different cut-off metric,
or better, a distance-like function in the terminology of
\cite{HairerMattinglScheutzow11}. Therefore, for any $ \M > 0 $ (we will fix $
\M $ in the lemma below), let us define the distance
\begin{equs}[e:dM]
d_{\M} \colon
\mathbf{P} \times \mathbf{P} \to [0, \M] \;, \qquad d_{\M}(\cdot, \cdot) \eqdef
d_{\mathbf{P}}(\cdot, \cdot) \wedge \M  \;.
\end{equs}
Then we can go one step further and obtain that $\mP_t$ is a contraction also
under $ d_{\M} $. As the proof shows, a slightly more refined argument can show
that the contraction property holds for any $ \M > 0 $, but this lies beyond our
needs.
\begin{lemma}\label{lem:dM}
There exists an $\M>0$ such that $d_\M$ as defined in \eqref{e:dM} satisfies for some $\zeta' >0$
\begin{equs}
d_{\M} (\mathcal{P}_t^{\ast} \mu_1, \mathcal{P}_t^{\ast} \mu_2) \leqslant e^{- \zeta'
   t} d_{\M} (\mu_1, \mu_2) \;.
\end{equs}
\end{lemma}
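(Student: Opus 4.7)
The plan is to upgrade the spectral gap in the full Hilbert metric (Lemma~\ref{lem:sg-was}) to one in the cut-off metric $d_{\M}$ by combining the pathwise Birkhoff contraction with a uniform (random) bound on the diameter of $\Phi_{t}^{\pr}(\mathbf{P})$, and performing a case analysis on an optimal coupling.

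First I introduce the random diameter $D_{t} := \sup_{\pi, \pi' \in \mathbf{P}} d_{\mathbf{P}}(\Phi_{t}^{\pr} \pi, \Phi_{t}^{\pr} \pi')$. Applying Theorem~\ref{thm:contraction} to the tail flow $\Phi_{s,t}^{\pr}$ yields $D_{t} \leqslant \tau(\Phi_{s,t}^{\pr}) D_{s}$ pathwise for $s < t$. Since $\tau(\Phi_{s,t}^{\pr})$ and $D_{s}$ depend on disjoint time intervals of the noise, and since $\EE[\tau(\Phi_{s,t}^{\pr})] \leqslant e^{-\zeta(t-s)}$ (from the proof of Lemma~\ref{lem:sg-was}), iterating gives $\EE[D_{t}] \leqslant C_{0} e^{-\zeta t}$ for all $t \geqslant t_{0}$, provided $\EE[D_{t_{0}}] < \infty$ for some $t_{0} > 0$. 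The latter follows from the heat kernel bounds of Section~\ref{sec:lbds}: writing $\Phi_{t_{0}}^{\pr} \pi(x) = \int_{\TT^{d}} K_{t_{0}}(x,y) \pi(y) \ud y / Z(\pi)$, the ratio $\Phi_{t_{0}}^{\pr} \pi(x)/\Phi_{t_{0}}^{\pr} \pi'(x')$ is bounded uniformly in $\pi, \pi' \in \mathbf{P}$ by a ratio of suprema and infima of $K_{t_{0}}$, so $D_{t_{0}} \lesssim \log\bigl(\sup K_{t_{0}}/\inf K_{t_{0}}\bigr)$, which has finite moments.

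Given $\mu_{1}, \mu_{2}$, I pick an optimal coupling $\mu \in \mC(\mu_{1}, \mu_{2})$ for $d_{\M}$, so that
\begin{equs}
d_{\M}(\mP_{t}^{\ast}\mu_{1}, \mP_{t}^{\ast}\mu_{2}) \leqslant \int_{\mathbf{P}\times\mathbf{P}} \EE\bigl[d_{\M}(\Phi_{t}^{\pr}\pi_{1}, \Phi_{t}^{\pr}\pi_{2})\bigr] \mu(\ud\pi_{1}, \ud\pi_{2}).
\end{equs}
I then split the integrand according to whether $d_{\mathbf{P}}(\pi_{1},\pi_{2}) \leqslant \M$. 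On this event $d_{\M}(\pi_{1},\pi_{2}) = d_{\mathbf{P}}(\pi_{1},\pi_{2})$, and Birkhoff yields $\EE[d_{\M}(\Phi_{t}^{\pr}\pi_{1},\Phi_{t}^{\pr}\pi_{2})] \leqslant e^{-\zeta t} d_{\mathbf{P}}(\pi_{1},\pi_{2}) = e^{-\zeta t} d_{\M}(\pi_{1},\pi_{2})$. On the complement $d_{\M}(\pi_{1},\pi_{2}) = \M$, so I discard the (useless) Birkhoff bound on the initial distance and use the diameter instead: $\EE[d_{\M}(\Phi_{t}^{\pr}\pi_{1},\Phi_{t}^{\pr}\pi_{2})] \leqslant \EE[D_{t} \wedge \M] \leqslant \EE[D_{t}] \leqslant C_{0} e^{-\zeta t} = (C_{0}/\M) e^{-\zeta t} d_{\M}(\pi_{1},\pi_{2})$. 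Choosing $\M \geqslant C_{0}$ aligns the two rates, and integrating against $\mu$ produces $d_{\M}(\mP_{t}^{\ast}\mu_{1}, \mP_{t}^{\ast}\mu_{2}) \leqslant e^{-\zeta t} d_{\M}(\mu_{1},\mu_{2})$ for every $t \geqslant t_{0}$.

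To pass to all $t \geqslant 0$, I observe that $\mP_{s}^{\ast}$ is non-expansive on $d_{\M}$ for every $s \geqslant 0$, since Birkhoff's $\tau \leqslant 1$ bound yields $d_{\M}(\Phi_{s}^{\pr}\pi_{1}, \Phi_{s}^{\pr}\pi_{2}) \leqslant d_{\M}(\pi_{1},\pi_{2})$. Writing $t = n t_{0} + s$ with $s \in [0, t_{0})$ and composing $n$ discrete contractions with the non-expansion on the initial piece yields $d_{\M}(\mP_{t}^{\ast}\mu_{1}, \mP_{t}^{\ast}\mu_{2}) \leqslant e^{-n\zeta t_{0}} d_{\M}(\mu_{1},\mu_{2})$, and the boundary factor $e^{\zeta t_{0}}$ is absorbed by passing to a slightly smaller exponent $\zeta' \in (0,\zeta)$. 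The main technical obstacle is ensuring $\EE[D_{t_{0}}] < \infty$, which hinges on the smoothing of the parabolic flow and the non-degeneracy of the noise encoded in Section~\ref{sec:lbds}; once this is in hand, the remainder is a short metric-space computation driven by the already-established spectral gap in the full Hilbert metric.
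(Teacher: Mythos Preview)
Your proof is correct and shares the same skeleton as the paper's: both take an optimal $d_{\M}$-coupling, split into the cases $d_{\mathbf{P}}(\pi_{1},\pi_{2})\leqslant \M$ and $>\M$, and handle the near case by the pathwise Birkhoff contraction already exploited in Lemma~\ref{lem:sg-was}. The difference lies entirely in the far case. The paper fixes a single time (namely $t=1$), chooses $\M$ so that the event $A_{\M}=\{\min K_{0,1}>e^{-\M/16},\ \max K_{0,1}<e^{\M/16}\}$ has positive probability $\ve>0$, and observes that on $A_{\M}$ every $\Phi_{1}^{\pr}\pi$ lies within $d_{\mathbf{P}}$-distance $\M/4$ of the constant function, so $\EE[d_{\M}(\Phi_{1}^{\pr}\pi_{1},\Phi_{1}^{\pr}\pi_{2})]\leqslant(1-\ve/2)\M$; this gives a one-step contraction constant $\alpha=\max\{e^{-\zeta},1-\ve/2\}$. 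Your route instead controls the random diameter $D_{t}=\mathrm{diam}_{d_{\mathbf{P}}}\Phi_{t}^{\pr}(\mathbf{P})$ in expectation, using $\EE[D_{t_{0}}]<\infty$ (a strictly stronger input than the paper's, but still available from Lemma~\ref{lem:c-bd}) together with the submultiplicativity of $\tau$ to get $\EE[D_{t}]\leqslant C_{0}e^{-\zeta t}$, and then picks $\M\geqslant C_{0}$. What this buys you is that the cut-off metric inherits the \emph{same} exponential rate $\zeta$ as the full Hilbert metric (for $t\geqslant t_{0}$), whereas the paper's rate $\zeta'=-\log\alpha$ may be strictly worse; on the other hand the paper's argument is marginally more robust in that it only needs $\PP(D_{1}<\M/2)>0$ rather than a first-moment bound on $D_{t_{0}}$. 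Both arguments then pass from a fixed time step to all $t$ by iteration and non-expansiveness, with the same harmless loss in the exponent.
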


\begin{proof}
The proof follows analogously to that of Lemma~\ref{lem:sg-was}. In particular,
it suffices to show that for some $ \alpha \in (0,1) $ we have $
d_{\M}(\mP_{1}^{\ast} \delta_{\pi_{1}}, \mP_{1}^{\ast} \delta_{\pi_{2}})
\leqslant \alpha d_{\M}(\pi_{1}, \pi_{2}) $ for all $ \pi_{1}, \pi_{2} \in
\mathbf{P} $ (where on the left we have the
Wasserstein distance, and on the right the cut-off distance on $
\mathbf{P} $).

To prove that the above bound holds, let $ K(x, y) $ be the integral kernel
associated to $ \Phi_{ 1} $. That is $ K $ is the solution to \eqref{e:Phi} with
initial condition $ w(x) = \delta_{y}(x) $, evaluated at time $ t=1 $. Then fix $ \M > 0 $ such
that the event $$ A_{\M} = \Big\{  \min_{x,y \in \TT^{d}} K(x, y) > \exp{-(\M/16)}
, \quad \max_{x,y \in \TT^{d}} K(x,y) <
\exp{(\M/16)} \Big\} $$
happens with some positive probability
\begin{equs}
\ve = \PP ( A_{\M} ) > 0\;.
\end{equs}
Note that arguing through the support theorem leads to conclude that $
P(A_{\M}) > 0 $ for all $ \M > 0 $. But to avoid unnecessary long arguments, we
skip this and simply observe that the above probability must be positive for
some $ \M > 0 $ since $ \PP ( \inf_{x, y} K (x, y) > 0\;, \; \| K \|_{\infty}
< \infty  ) = 1 $.

Then we can bound, for any $ \pi_{1}, \pi_{2} \in \mathbf{P} $ and $\zeta > 0$ as
in Lemma~\ref{lem:sg-was}:
\begin{equs}
d_{\M}(\mP_{1}^{\ast} \delta_{\pi_{1}}, \mP_{1}^{\ast} \delta_{\pi_{2}})
& \leqslant \EE d_{\M}(\Phi^{\pr}_{1} \pi_{1}, \Phi^{\pr}_{1} \pi_{2})\\
&  \leqslant \begin{cases}
e^{- \zeta} d_{\mathbf{P}}(\pi_{1}, \pi_{2}) & \text{ if } d_{\mathbf{P}}(\pi_{1},
\pi_{2}) \leqslant \M \;, \\
\EE \Big[ d_{\M}(\Phi^{\pr}_{1} \pi_{1}, \Phi^{\pr}_{1} \pi_{2}) \Big] & \text{
if } d_{\mathbf{P}}(\pi_{1}, \pi_{2}) \geqslant \M\;.
\end{cases}
\end{equs}
Now on the event $ A_{\M} $ we have, uniformly over $ \pi_{1} $:
\begin{equs}
\Phi^{\pr}_{1} \pi_{1} (x) = \bigslant{\int_{\TT^{d}} K(x,y) \pi_{1}(y) \ud
y}{\int_{\TT^{d} \times \TT^{d}} K(x, y) \pi_{1}(y) \ud x \ud y} \in
\Big[ \exp {- \M/8}, \exp{\M/8}\Big]\;,
\end{equs}
so that we can bound
\begin{equs}
d_{\mathbf{P}}(\Phi^{\pr}_{t} \pi_{1}, 1) 1_{A_{\M}} \leqslant \frac{\M}{4}\;.
\end{equs}
In particular
\begin{equs}
\EE \Big[ d_{\M}(\Phi^{\pr}_{1} \pi_{1}, \Phi^{\pr}_{1} \pi_{2}) \Big] & \leqslant \EE \Big[
(d_{\mathbf{P}}(\Phi^{\pr}_{1} \pi_{1}, 1) + d_{\mathbf{P}}(\Phi^{\pr}_{1} \pi_{2}, 1))
1_{A_{\M}} + \M 1_{A_{\M}^{c}}\Big] \\
& \leqslant \ve \frac{\M}{2} + (1 - \ve) \M = \overline{\alpha} \M\;,
\end{equs}
with $ \overline{\alpha} = \frac{\ve}{2} + (1 - \ve) \in (0, 1) $.
We have found that
\begin{equs}
d_{\M}(\mP_{1}^{\ast} \delta_{\pi_{1}}, \mP_{1}^{\ast} \delta_{\pi_{2}}) \leqslant \alpha
d_{\M}(\pi_{1}, \pi_{2})\;, \qquad \alpha = \max \{ \overline{\alpha},
\exp(- \zeta) \} \in (0, 1)\;,
\end{equs}
which is sufficient to complete the proof of the lemma.
\end{proof}

\subsection{Bounds on the corrector} \label{subsec:bounds-corrector}
Now we are ready to rigorously introduce the corrector $ G $, formally defined
through the identity \eqref{e:def-G} and analyse its properties. The starting
point of our analysis is the following regularity estimate on the functional $
F $ defined in \eqref{e:F}. 

\begin{lemma}\label{lem:FLip}
Consider the functional $F \colon \mathbf{P} \to \RR$ as defined in
\eqref{e:F}. Then the following estimate hold:
\begin{equs}
|F(\pi) - F(\nu)| &\leqslant
\|\kappa\|_{\infty} \left\{ \exp(d_{\mathbf{P}}(\pi, \nu)) - 1\right\} \;,
\qquad & & \forall \pi, \nu \in \mathbf{P}\;, \\
| F (\pi) | & \leqslant | \lambda - \gamma | + \frac{1}{2}  \| \kappa
\|_{\infty}\;, \qquad & & \forall \pi \in \mathbf{P} \;.
\end{equs}
As a consequence, we obtain that for any $ \M > 0 $ there exists a constant $
C (\M) > 0 $ such that with $ d_{\M} $ the distance defined in \eqref{e:dM}, and
the Lipschitz norm as in \eqref{e:def-lip}:
\begin{equs}
\| F \|_{\mathrm{Lip} (d_{\M})} \leqslant C(\M) < \infty \;.
\end{equs}
\end{lemma}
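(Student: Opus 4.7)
My plan is to handle the three claims in sequence, exploiting the fact that $F$ is essentially a bilinear form in $\pi$ with a bounded kernel, so all bounds should reduce to $L^1$ estimates on $\pi$ and $\nu$, which can then be traded for $d_{\mathbf{P}}$ estimates via Lemma~\ref{lem:bush}.

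For the uniform bound $|F(\pi)|\leqslant |\lambda-\gamma|+\tfrac12\|\kappa\|_\infty$, I would simply note that $\pi\in\mathbf{P}$ is a probability density, so that $\pi(x)\pi(y)\,\ud x\,\ud y$ is a probability measure on $\TT^d\times\TT^d$; pulling $\|\kappa\|_\infty$ out gives the claim with no further work.

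For the Hilbert-metric bound I would write the telescoping identity
\begin{equs}
\pi(x)\pi(y)-\nu(x)\nu(y)=\bigl(\pi(x)-\nu(x)\bigr)\pi(y)+\nu(x)\bigl(\pi(y)-\nu(y)\bigr),
\end{equs}
plug it into the definition of $F(\pi)-F(\nu)$, bound $|\kappa|$ by $\|\kappa\|_\infty$, and integrate. Since $\|\pi\|_{L^1}=\|\nu\|_{L^1}=1$, this yields $|F(\pi)-F(\nu)|\leqslant \|\kappa\|_\infty\|\pi-\nu\|_{L^1}$, and then Lemma~\ref{lem:bush} converts the right-hand side into $\|\kappa\|_\infty(e^{d_{\mathbf{P}}(\pi,\nu)}-1)$, as desired.

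For the Lipschitz estimate with respect to $d_{\M}$, I would split on whether $d_{\mathbf{P}}(\pi,\nu)$ exceeds the threshold $\M$. In the regime $d_{\mathbf{P}}(\pi,\nu)\leqslant \M$, $d_{\M}$ and $d_{\mathbf{P}}$ agree, and the inequality $e^x-1\leqslant \frac{e^\M-1}{\M}x$ for $x\in[0,\M]$ turns the exponential bound from the previous step into a linear Lipschitz bound with constant $\|\kappa\|_\infty(e^\M-1)/\M$. In the regime $d_{\mathbf{P}}(\pi,\nu)>\M$, the distance $d_{\M}$ is pinned at $\M$ while the uniform bound on $|F|$ controls $|F(\pi)-F(\nu)|$ by $2|\lambda-\gamma|+\|\kappa\|_\infty$, giving the Lipschitz constant $(2|\lambda-\gamma|+\|\kappa\|_\infty)/\M$. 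Taking the maximum of these two constants produces $C(\M)$.

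I do not anticipate a genuine obstacle here: the argument is entirely elementary once one has the bilinearity of $F$ and the $L^1$-versus-$d_{\mathbf{P}}$ comparison of Lemma~\ref{lem:bush}. The only subtlety is the standard one that $F$ is merely locally Lipschitz in $d_{\mathbf{P}}$ (because of the exponential blow-up), which is precisely why the cut-off distance $d_{\M}$ is introduced, and why the third claim has to be stated with a constant depending on $\M$.
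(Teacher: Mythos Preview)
Your proposal is correct and follows essentially the same route as the paper: both arguments reduce the first two bounds to the fact that $\pi\otimes\pi$ is a probability measure together with Lemma~\ref{lem:bush}, and both deduce the $d_{\M}$-Lipschitz bound by splitting on whether $d_{\mathbf{P}}(\pi,\nu)$ is above or below~$\M$. The only (immaterial) difference is that in the far regime the paper notes $|F(\pi)-F(\nu)|\leqslant\|\kappa\|_\infty$ directly, since the constant $\gamma-\lambda$ cancels in the difference, whereas you invoke the uniform bound and pick up an extra $2|\lambda-\gamma|$.
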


\begin{proof}
The two bounds on $ F $ follow from Lemma~\ref{lem:bush}, together with the
definition of $F$. As for the Lipschitz regularity of $ F $ with respect to $
d_{\M} $, this follows, since from the first bound there exists a constant $
C (\M) > 0 $ such that
\begin{equs}
\sup_{\pi, \nu \in \mathbf{P}  \; \colon \; d(\pi, \nu) \leqslant \M}
\frac{| F(\pi) - F (\nu) |}{d_{\mathbf{P}}(\pi, \nu)} \leqslant  C_{1} (\M) <
\infty \;.
\end{equs}
Instead, if $ d_{\mathbf{P}}(\pi, \nu) \geqslant \M $, we have that $ |
F(\pi) - F (\nu)| \leqslant \| \kappa \|_{\infty} $. We obtain the result with
$ C (\M) = \max \{ C_{1} (\M), \| \kappa \|_{\infty} \} $.
\end{proof}
This result simply shows that $F$ is bounded and Lipschitz with
respect to the metric $d_{\mathbf{P}}$. From
Lemma~\ref{lem:dM}, we know that for $\M> 0$ sufficiently large,
the semigroup $\mP_t$ is a contraction in the Wasserstein distance associated
to $d_\M$.
As a consequence, the integral in \eqref{e:def-G} is well defined since for
such $\M$ and $\zeta'>0$ as in Lemma~\ref{lem:dM}, we find
\begin{equs}[e:g-defined]
\int_0^{\infty} | \langle F, (\mathcal{P}_t^{\ast} \mu - \mu_{\infty})
   \rangle | \ud t \leqslant \| F \|_{\mathrm{Lip}(d_{\M})}
\int_0^{\infty} d_{\M} (\mathcal{P}_t^{\ast} \mu, \mu_{\infty}) \ud t \leqslant  \frac{\| F 
   \|_{\mathrm{Lip}(d_{\M})}}{\zeta'} d_{\M} (\mu, \mu_{\infty}) \;. \quad
\end{equs}
As a consequence $ G $ as in \eqref{e:def-G} not only is well-defined, but it
is also Lipschitz in the $ d_{\M} $-metric.

\begin{lemma}\label{lem:local-lipschitz}
Consider the linear functional $G \in
  \mathcal{M}^{\ast}(\mathbf{P})$ given by \eqref{e:def-G} (the integral being
defined in view of \eqref{e:g-defined}). Then if we define $ G(\pi) = \langle
G, \delta_{\pi} \rangle $ for all $ \pi \in \mathbf{P} $, we find $G \in
\mathrm{Lip}_{d_\M} (\mathbf{P}; \mathbb{R})$, for $\M, \zeta' >0$ as in
Lemma~\ref{lem:dM} with
  $$\|G\|_{\mathrm{Lip}_{d_\M}(\mathbf{P};\RR)} \leqslant (\zeta^{\prime})^{-1} \|F\|_{\mathrm{Lip}_{d_\M}(\mathbf{P}; \RR)} < \infty\;.$$
\end{lemma}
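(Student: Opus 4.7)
The proof of Lemma~\ref{lem:local-lipschitz} should be essentially a direct consequence of Lemma~\ref{lem:FLip} and Lemma~\ref{lem:dM}, combined with the integral representation of $G$ from \eqref{e:def-G}. The plan is to evaluate the linear functional at Dirac masses and estimate the integrand using the Lipschitz continuity of $F$ together with the exponential contraction of $\mathcal{P}_t^\ast$ in the cut-off Wasserstein metric.

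First I would fix $\pi_1, \pi_2 \in \mathbf{P}$ and use \eqref{e:def-G} with $\mu = \delta_{\pi_i}$ to write
\begin{equs}
G(\pi_1) - G(\pi_2) = \int_0^\infty \langle F, \mathcal{P}_t^\ast \delta_{\pi_1} - \mathcal{P}_t^\ast \delta_{\pi_2} \rangle \ud t \;,
\end{equs}
where we use that the $\mu_\infty$ contributions cancel. By Lemma~\ref{lem:FLip} the functional $F$ lies in $\mathrm{Lip}(d_\M)$, and by duality of the Kantorovich--Rubinstein distance on $\mM(\mathbf{P})$,
\begin{equs}
\left| \langle F, \mathcal{P}_t^\ast \delta_{\pi_1} - \mathcal{P}_t^\ast \delta_{\pi_2} \rangle \right| \leqslant \| F \|_{\mathrm{Lip}(d_\M)} \, d_\M(\mathcal{P}_t^\ast \delta_{\pi_1}, \mathcal{P}_t^\ast \delta_{\pi_2}) \;.
\end{equs}
Then I apply the contraction estimate from Lemma~\ref{lem:dM} to bound the right-hand side by $\| F \|_{\mathrm{Lip}(d_\M)} \, e^{-\zeta' t} d_\M(\delta_{\pi_1}, \delta_{\pi_2})$, and observe that $d_\M(\delta_{\pi_1}, \delta_{\pi_2}) = d_\M(\pi_1, \pi_2)$ since the only admissible coupling of two Dirac masses is the product.

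Integrating in $t$ gives
\begin{equs}
|G(\pi_1) - G(\pi_2)| \leqslant \| F \|_{\mathrm{Lip}(d_\M)} \, d_\M(\pi_1, \pi_2) \int_0^\infty e^{-\zeta' t} \ud t = (\zeta')^{-1} \| F \|_{\mathrm{Lip}(d_\M)} \, d_\M(\pi_1, \pi_2) \;,
\end{equs}
which is exactly the claimed bound. Convergence of the integral and well-definedness of $G$ as a linear functional on $\mM(\mathbf{P})$ have already been ensured by \eqref{e:g-defined}, so no genuine obstacle appears; the only point requiring minor care is that we really evaluate $G$ at a Dirac, so that the Wasserstein distance between the two initial measures reduces cleanly to $d_\M(\pi_1, \pi_2)$. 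Everything else is a direct chaining of the two previous lemmas.
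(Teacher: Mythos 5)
Your proof is correct and matches the paper's argument essentially line for line: write the difference $G(\pi_1)-G(\pi_2)$ via the integral representation \eqref{e:def-G} at Dirac masses, pull out $\|F\|_{\mathrm{Lip}(d_\M)}$ by Kantorovich--Rubinstein duality, apply the contraction from Lemma~\ref{lem:dM}, and integrate the exponential. The only slight elaboration beyond the paper is your explicit note that $d_\M(\delta_{\pi_1},\delta_{\pi_2})=d_\M(\pi_1,\pi_2)$, which the paper leaves implicit.
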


\begin{proof}
From the definition of $ G $ we obtain that
\begin{equs}
| G(\pi_{1}) - G(\pi_{2}) | & \leqslant \int_{0}^{\infty} | \langle F,
\mP_{t}^{\ast} \delta_{\pi_{1}} - \mP_{t}^{*} \delta_{\pi_{2}} \rangle | \ud t \\
& \leqslant \| F \|_{\mathrm{Lip}(d_{\M})} \int_{0}^{\infty}
d_{\M}(\mP_{t}^{*} \delta_{\pi_{1}}, \mP_{t}^{*} \delta_{\pi_{2}} ) \ud t
& \leqslant \frac{\| F \|_{\mathrm{Lip}(d_{\M})}d_\M(\pi_{1},
\pi_{2})}{\zeta'},
\end{equs}
so that the result follows by Lemma~\ref{lem:FLip}.
\end{proof}
The previous result allows us to deduce that $ G $ is bounded: this is
the content of the next proposition.

\begin{proposition}\label{prop:bdd}
Consider $ G $ as in \eqref{e:def-G}. Then
\begin{equs}
\sup_{\pi \in \mathbf{P}} | G(\pi) | < \infty\;.
\end{equs}
\end{proposition}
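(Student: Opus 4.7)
The plan is to exploit the fact that the cut-off distance $d_{\M}$ from \eqref{e:dM} is by construction uniformly bounded by $\M$, which is precisely the reason for introducing it in the first place. Since $\mu_{\infty}$ is invariant, $\mathcal{P}_{t}^{\ast} \mu_{\infty} = \mu_{\infty}$, and we can rewrite
\begin{equs}
G(\pi) = \int_{0}^{\infty} \langle F , \mathcal{P}_{t}^{\ast} \delta_{\pi} - \mu_{\infty} \rangle \ud t = \int_{0}^{\infty} \langle F , \mathcal{P}_{t}^{\ast} (\delta_{\pi} - \mu_{\infty}) \rangle \ud t \;.
\end{equs}
By Lemma~\ref{lem:FLip}, $F$ belongs to $\mathrm{Lip}(d_{\M})$ with a finite norm, so by duality of the Kantorovich--Rubinstein distance we estimate the integrand as
\begin{equs}
| \langle F , \mathcal{P}_{t}^{\ast}\delta_{\pi} - \mu_{\infty} \rangle | \leqslant \| F \|_{\mathrm{Lip}(d_{\M})} \, d_{\M}(\mathcal{P}_{t}^{\ast} \delta_{\pi} , \mathcal{P}_{t}^{\ast} \mu_{\infty}) \;.
\end{equs}

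Next, by the contraction property proven in Lemma~\ref{lem:dM}, for some $\zeta^{\prime} > 0$ we have
\begin{equs}
d_{\M}(\mathcal{P}_{t}^{\ast} \delta_{\pi} , \mathcal{P}_{t}^{\ast} \mu_{\infty}) \leqslant e^{- \zeta^{\prime} t} d_{\M}(\delta_{\pi}, \mu_{\infty}) \;.
\end{equs}
Here lies the crucial observation: since $d_{\M}$ takes values in $[0,\M]$, the Wasserstein distance associated to $d_{\M}$ satisfies
\begin{equs}
d_{\M}(\delta_{\pi}, \mu_{\infty}) \leqslant \int_{\mathbf{P}} d_{\M}(\pi, \nu) \mu_{\infty}(\ud \nu) \leqslant \M \;,
\end{equs}
uniformly over all $\pi \in \mathbf{P}$. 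This uniform bound is precisely what the original metric $d_{\mathbf{P}}$ lacks (since it can be arbitrarily large for initial conditions close to the boundary of the positive cone), and is what motivated the introduction of the truncated distance.

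Combining these estimates and integrating in time, we obtain
\begin{equs}
\sup_{\pi \in \mathbf{P}} | G(\pi) | \leqslant \| F \|_{\mathrm{Lip}(d_{\M})} \, \M \int_{0}^{\infty} e^{- \zeta^{\prime} t} \ud t = \frac{\| F \|_{\mathrm{Lip}(d_{\M})} \M}{\zeta^{\prime}} < \infty \;,
\end{equs}
which is the claim. There is no genuine obstacle here: all the real work was already done in Lemma~\ref{lem:FLip}, Lemma~\ref{lem:dM}, and Lemma~\ref{lem:local-lipschitz}; the present statement is essentially a corollary that records the benefit of having set up the spectral gap in the bounded distance $d_{\M}$ rather than in $d_{\mathbf{P}}$.
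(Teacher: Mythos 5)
Your proof is correct and is in fact slightly more direct than the paper's. You observe that the integral representation \eqref{e:def-G} together with the contraction in $d_\M$ (Lemma~\ref{lem:dM}) gives $|G(\pi)| \leqslant \|F\|_{\mathrm{Lip}(d_\M)} (\zeta')^{-1} d_\M(\delta_\pi, \mu_\infty)$, and then you exploit that the Wasserstein distance built from the truncated cost $d_\M$ is trivially bounded by $\M$. The paper instead first passes to the $d_\M$-Lipschitz regularity of $G$ itself (Lemma~\ref{lem:local-lipschitz}), notes that a $d_\M$-Lipschitz function has oscillation at most $\M\|G\|_{\mathrm{Lip}(d_\M)}$, and then must still pin down the value of $G$ at one point: it argues that since $\mu_\infty(G) = 0$ and $G$ is continuous on the path-connected space $\mathbf{P}$, there exists some $\nu_0$ with $G(\nu_0) = 0$, whence $\sup_\pi |G(\pi)| = \sup_\pi |G(\pi) - G(\nu_0)| \leqslant \M\|G\|_{\mathrm{Lip}(d_\M)}$. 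Your route bypasses the intermediate-value step entirely by bounding $|G(\pi)|$ directly, which is cleaner; the only thing the paper's route buys is that it re-uses the Lipschitz bound on $G$, which is in any case needed elsewhere.
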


\begin{proof}
Observe that from the definition of the distance $d_\M$, we have
\begin{equs}
 \sup_{\pi, \nu \in \mathbf{P}} | 
\varphi(\pi) -  \varphi(\nu)| \leqslant \M \| \varphi
\|_{\mathrm{Lip}(d_{\M}) } \;, \qquad \forall \varphi \in
\mathrm{Lip}(d_{\M})  \;.
\end{equs}
Then the proof follows from Lemma~\ref{lem:local-lipschitz}, if we can show
that there exists a point $\nu_0 \in \mathbf{P}$ such that $G(\nu_0)=0$. This
is the case because $\EE_{\mu_{\infty}}[G] = 0$ (with $ \mu_{\infty}$ the invariant law of $t \mapsto \pi_t$) and since $G$ is continuous and the domain $\mathbf{P}$ is connected by paths.

\end{proof}

\section{Lower bounds for stochastic flows} \label{sec:lbds}

In this section we recall basic results concerning the well-posedness of the
linear equation \eqref{e:Phi} and obtain some lower bounds on the
fundamental solution of the same equation.

Recall that our starting point is a linear equation of the following form
(without loss of generality we consider the flow from time zero $
\Phi_{t} =  \Phi_{0, t} $, since the full flow can be studied with the same
tools):
\begin{equation*}
\begin{aligned}
\partial_{t} \Phi = \Delta \Phi + \gamma \Phi +
 \Phi \ud W   \;, \qquad \Phi (0, \cdot) = w(\cdot) \in C
(\TT^{d}; [0, \infty))
\end{aligned}
\end{equation*}
with $ W $ satisfying Assumption~\ref{assu:nonlinearities} and
$\gamma \in \RR $ (in the case of \eqref{e:Phi} we have used $ \gamma = f'(0)$,
but this is irrelevant to our current discussion). 
One way to construct
solutions to this equation is through the transformation
\begin{equation} \label{e:v}
\begin{aligned}
\overline{\Phi}_{t}   = e^{- Y_{t}  } \Phi_{t} \;,
\end{aligned}
\end{equation}
where following \eqref{e:XY}, $ Y $ is the solution to the linear equation with additive noise
\begin{equation} \label{e:X}
\begin{aligned}
\ud Y = \Delta Y \ud t + \gamma  \ud t +  \ud W_t  \;,
\qquad Y(0, \cdot) = 0 \;.
\end{aligned}
\end{equation}
We observe that the process $ Y $  has an explicit
representation in terms of the heat semigroup $ (P_{t})_{t \geqslant 0} $:
\begin{equs}
Y_{t} = \gamma t +  \int_{0}^{t} P_{t-s} \ud W_{s }
\;,
\end{equs}
and furthermore, since $ Y $ is Gaussian, we can control moments of $ Y $ in
the following way.

\begin{lemma}\label{lem:X}
Let $ Y \colon [0, \infty) \times \TT^{d}  \to \RR  $ be the solution to
\eqref{e:X}. Then for every $ T > 0 $ there exist $ \alpha_{T},
C_{T} > 0 $ such that
\begin{equs}
\EE \left[ \exp \left( \alpha_{T}  \sup_{0 \leqslant s \leqslant T} \| {Y}_{s}
\|_{\mC^{1}}^{2} \right) \right] \leqslant C_{T} < \infty \;.
\end{equs}
\end{lemma}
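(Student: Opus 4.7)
The plan is to isolate the Gaussian component of $Y$ and then apply Fernique's theorem in the Banach space $B = C([0,T]; \mC^{1}(\TT^{d}))$. First, I split
\begin{equs}
Y_{t}(x) = \gamma t + Z_{t}(x) \;, \qquad Z_{t} = \int_{0}^{t} P_{t-s} \ud W_{s} \;.
\end{equs}
The deterministic part contributes at most $\sup_{0 \leqslant s \leqslant T} \| \gamma s \|_{\mC^{1}} \leqslant |\gamma| T$, so up to losing a multiplicative constant in front of $\alpha_{T}$ it is enough to prove the statement for $Z$ in place of $Y$.

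Next, I would show that $Z$ almost surely takes values in $B$, and moreover defines a $B$-valued Gaussian random variable. For the Gaussian property one uses that $Z$ is a Wiener integral against $W$, so for any finite collection of test functionals the joint law is Gaussian, and $B$-valued Gaussianity follows once continuity of paths into $\mC^{1}(\TT^{d})$ is established. The key regularity estimate is to show that there exist $\beta \in (0, \tfrac{1}{2})$ and $p \geqslant 1$ such that
\begin{equs}
\EE\bigl[ \| Z_{t} - Z_{s} \|_{\mC^{1}}^{2p}\bigr] \lesssim_{T} |t - s|^{2 \beta p} \;,
\end{equs}
for all $0 \leqslant s \leqslant t \leqslant T$. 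This can be done along the lines of the factorisation argument used in Lemma~\ref{uy}: write $Z_{t} = c_{\alpha} \int_{0}^{t}(t -r)^{\alpha -1} P_{t-r} Z^{\alpha}_{r} \ud r$ with $Z^{\alpha}_{r} = \int_{0}^{r}(r-s)^{-\alpha} P_{r-s} \ud W_{s}$. For $\alpha \in (0, \tfrac{1}{2})$, the auxiliary process $Z^{\alpha}_{r}$ is bounded in every $L^{p}$ as a random variable with values in a fractional Sobolev space of regularity strictly greater than one (here I use $\kappa \in \mC^{1}$ through the covariance bound $\EE |\partial_{x_{i}} P_{r -s}\ud W_{s}(x)|^{2} \lesssim \int_{0}^{r}(r-s)^{-1 - \mathrm{small}} \ud s$, which is finite because the spatial correlation kernel has $\mC^{1}$ regularity), and Schauder estimates on the factorisation integral transfer this into Hölder continuity in time with values in $\mC^{1}$. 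A Gaussian Kolmogorov criterion then yields a continuous modification taking values in $B$.

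Having established that $Z$ is a centred Gaussian random variable in the separable Banach space $B$, the conclusion is immediate from Fernique's theorem: there exists $\alpha_{T} > 0$ (depending on the distribution of $\| Z \|_{B}$, which in turn depends only on $T$, $\kappa$ and the ambient dimension) such that $\EE[\exp(\alpha_{T} \| Z \|_{B}^{2})] < \infty$. Combining this with the deterministic bound on $\gamma t$, the desired estimate on $Y$ follows. The main obstacle in this strategy is the $\mC^{1}$ regularity of $Z$: one must exploit the $\mC^{1}$ regularity of $\kappa$ carefully, for instance via the integration-by-parts trick used in the proof of Lemma~\ref{uy}, to avoid a spurious short-time blow-up when bounding $\EE |\partial_{x_{i}} Z_{t}(x)|^{2}$.
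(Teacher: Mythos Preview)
Your approach is essentially the same as the paper's: both split off the deterministic drift, establish that the Gaussian stochastic convolution lies in $C([0,T];\mC^{1}(\TT^{d}))$ via the factorisation method combined with the $\mC^{1}$ regularity of $\kappa$ (one integration by parts to tame the gradient), and then invoke Fernique's theorem. One small wording issue: your parenthetical covariance bound $\int_{0}^{r}(r-s)^{-1-\mathrm{small}}\ud s$ is not integrable as written; what the $\mC^{1}$ assumption on $\kappa$ actually buys you is an exponent strictly larger than $-1$ (the paper obtains $(r-s)^{-1/2}$ after integrating one derivative by parts onto $\kappa$), so you should rephrase that estimate accordingly.
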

\begin{proof}
Let us write $ p_{t}(x) $ for the periodic heat kernel, meaning that $p_t(x)=
\sum_{z \in \ZZ^{d}} (2\pi t)^{-\frac{d}{2}} e^{-|x-zj|^2/2t}$ and $
P_{t} \varphi = \smallint_{\TT^d} p_{t}(x-y) \varphi(y) \ud y $. Then we find that for any $ i, j \in \{ 1, \dots, d \} $:
\begin{equs}
\EE[  \partial_{x_{i} }{Y} (t, x) \partial_{y_{j}}  {Y} (t, y) ] & = \int_{0}^{t}
\int_{\TT^{d}} p_{t-s} (x-z_{1}) p_{t -s}(y- z_{2}) \partial_{z_{1, i}}
\partial_{z_{1, j}}\kappa (z_{1}, z_{2}) \ud
z_{1} \ud z_{2} \ud s \\
& \lesssim \int_{0}^{t} (t-s)^{- \frac{1}{2}} \| \kappa \|_{\mC^{1}} \ud s
\lesssim \sqrt{t} \| \kappa \|_{\mC^{1}} \;,
\end{equs}
where we have used once integration by parts.
Since $ \kappa \in C^{1}(\TT^d\times\TT^d)$ 
we immediately deduce that
\begin{equs}
\EE \left[ \| Y \|^{2}_{ L^{2}([0, T]; \mC^{1}(\TT^{d}))} \right] < \infty
\;.
\end{equs}
Through a standard argument using the factorisation method (see for example
\cite[Theorem 1.1]{DPZConvo}), one can similarly
deduce that the bound holds uniformly in time:
\begin{equs}
\EE \left[ \| Y \|^{p}_{ L^{\infty}([0, T]; \mC^{1}(\TT^{d}))} \right] < \infty
\;,
\end{equs}
for any $ p \geqslant 1 $.
Finally, the exponential moment bound holds by Fernique's
theorem,~\cite[Theorem 4.3]{ISEM}. 
\end{proof}

\begin{remark}\label{rem:regkappa}
To be precise, the previous theorem holds under the weaker assumption $ \kappa
\in \mC^{\ve} $ for an arbitrary $ \ve \in (0, 1) $. Since we are not interested
in this generalisation we work under the stronger assumption $ \kappa \in
\mC^{1} $.
\end{remark}
Next, we use \eqref{e:v} to rewrite \eqref{e:Phi} as an equation with random
coefficients (as opposed to having a noise that is white in time).
Namely, we compute that $ \overline{\Phi} $ satisfies the following identities:
\begin{equs}
\ud \overline{\Phi} & = -  \Phi e^{- {Y}}( (\Delta Y + \gamma )  \ud t  +  \ud
W ) + e^{-{Y}}
( (\Delta + \gamma) \Phi \ud t + \Phi \ud W  ) - \frac{1}{2}
\Phi e^{-{Y}}  \kappa_{\mathrm{tr}}  \ud t \end{equs}
where the trace $ \kappa_{\tr} (x) = \kappa (x, x) $ appears from the quadratic variation
terms in It\^o's formula, which amount to
\begin{equation*}
\begin{aligned}
\frac{1}{2} \left\{ e^{-{Y}} \Phi \ud \langle Y \rangle -2e^{-{Y}} \ud \langle {Y},
\Phi \rangle \right\} = - \frac{1}{2} \kappa_{\tr} \overline{\Phi} \ud t\;.
\end{aligned}
\end{equation*}
This further simplifies to the following PDE (we drop the differential
formulation, because the noise terms have canceled):
\begin{equs}
\partial_{t} \overline{\Phi}& = \Delta \overline{\Phi} + 2 e^{- {Y}} \nabla {Y} \cdot \nabla
\Phi - e^{- {Y}} | \nabla {Y}
|^{2} \Phi - \frac{1}{2} e^{-{Y}} \kappa_{\mathrm{tr}} \Phi \\
& = \Delta \overline{\Phi} + 2 \nabla {Y} \cdot \nabla \overline{\Phi} + 2
e^{-{Y}} | \nabla {Y} |^{2} \Phi - e^{- {Y}} | \nabla {Y}
|^{2} \Phi - \frac{1}{2}  e^{-{Y}} \kappa_{\mathrm{tr}} \Phi  \\
& = \Delta \overline{\Phi} + 2 \nabla {Y} \cdot \nabla \overline{\Phi}  +
e^{-{Y}} | \nabla {Y} |^{2} \Phi  - \frac{1}{2} 
e^{-{Y}} \kappa_{\mathrm{tr}} \overline{\Phi} \;,
\end{equs}
so that overall we have obtained the PDE with random coefficients:
\begin{equs}[e:v2]
\partial_{t} \overline{\Phi} = \Delta \overline{\Phi} + 2 \nabla {Y} \cdot \nabla \overline{\Phi} + | \nabla {Y}
|^{2} \overline{\Phi}  - \frac{1}{2} \kappa_{\mathrm{tr}}  \overline{\Phi} \;. 
\end{equs}
Now, in view of Lemma~\ref{lem:X}, we see that Equation~\eqref{e:v2} and
therefore also \eqref{e:v} admits, almost
surely, a path-wise solution for all times and initial conditions. Furthermore,
we can analogously construct the flow $ \Phi $ in 
\eqref{e:Phi}, and represent it through an integral kernel
\begin{equs}[e:kernel]
\Phi_{s, t}[w] (x) = \int_{\TT^{d}} K_{s, t}(x, y) w (y) \ud y \;.
\end{equs}
Now, we use the representation through
\eqref{e:v} to obtain lower bounds on the kernel $ K_{s, t} $
associated to the flow $ \Phi_{s, t} $ in \eqref{e:kernel}.

\begin{lemma}\label{lem:c-bd}
For every $ T > 0 $ there exists an $ \eta(T) > 0 $
such that the following holds for all $ \eta \in [0, \eta(T)] $ and for $ \Phi $
the flow associated to \eqref{e:Phi} and \( K \) the kernel in \eqref{e:kernel} associated to the
flow. There exists a
deterministic constant $ C (T) > 0 $ such that for any stopping time $ \tau
 $ and with
\begin{equation*}
\begin{aligned}
c_{K} (s, t) = \min_{x \in \TT^{d}} \int_{\TT^{d}} K_{s, t} (x, y) \ud y \;,
\end{aligned}
\end{equation*}
it holds that:
\begin{equation*}
\begin{aligned}
\EE_{\tau} \left[ \sup_{0 \leqslant t \leqslant T} c_{K} (\tau, \tau +
t)^{- \eta}   \right] \leqslant  C( T ) \;.
\end{aligned}
\end{equation*}
Furthermore, it holds that
\begin{equs}
 \EE_{\tau} \left[ \left( \min_{x, y} K_{\tau, \tau + T} (x, y) \right)^{-
\eta} +   \left( \max_{x, y} K_{\tau, \tau+ T} (x, y) \right)^{\eta} \right]
\leqslant C(T) \;.
\end{equs}
\end{lemma}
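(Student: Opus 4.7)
The plan is to reduce to the case $\tau = 0$ via the independence-of-increments structure, and then derive deterministic bounds on the kernel $K_{0,t}(x,y)$ in terms of the $\mC^{1}$-norm of the additive-noise process $Y$, at which point Lemma~\ref{lem:X} and Fernique supply the required integrability. Since $K_{\tau, \tau + t}$ is measurable with respect to the noise increments $\{W_{\tau+r} - W_{\tau}\}_{0 \leqslant r \leqslant t}$, which conditionally on $\mF_{\tau}$ form a white noise with the same law as $W$ and are independent of $\mF_{\tau}$, both conditional expectations coincide with the corresponding unconditional ones at $\tau=0$, and it suffices to bound
\begin{equs}
\EE \left[ \sup_{0 \leqslant t \leqslant T} c_{K}(0,t)^{- \eta} \right]
\qquad \text{and} \qquad
\EE \left[ \left( \min_{x,y} K_{0, T}(x,y)\right)^{- \eta} + \left( \max_{x,y} K_{0, T} (x,y)\right)^{\eta}\right].
\end{equs}

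Next, I would use the exponential transformation $\overline{\Phi}_{0,t} = e^{- Y_{0,t}} \Phi_{0,t}$ already introduced in \eqref{e:v}--\eqref{e:v2}. Writing $\overline{K}_{0,t}(x,y)$ for the fundamental solution of the PDE with random coefficients \eqref{e:v2}, a direct computation gives the pointwise identity $K_{0,t}(x,y) = e^{Y_{0,t}(x)} \, \overline{K}_{0,t}(x,y)$. The equation for $\overline{\Phi}$ is exactly of the type treated by Lemma~\ref{lem:pvz}, modulo the zeroth-order contributions $|\nabla Y|^{2}$ and $- \frac{1}{2} \kappa_{\tr}$; the former is positive and can be estimated by $M_{T}^{2}$ with $M_{T} = \sup_{0 \leqslant r \leqslant T} \| Y_{0,r} \|_{\mC^{1}}$, while the latter is bounded. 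As in the multiplicative step of the proof of Lemma~\ref{lem:comp-phi-psi}, comparison with the fundamental solution of the pure drift equation yields constants $C(T), c(T) > 0$ and deterministic periodic heat kernels $p_{ct}$ such that, uniformly in $0 < t \leqslant T$ and $x,y \in \TT^{d}$,
\begin{equs}
C(T)^{-1} \, e^{- c(T) M_{T}^{2}} \, p_{c(T) t}(x - y) \;\leqslant\; \overline{K}_{0,t}(x,y) \;\leqslant\; C(T) \, e^{c(T) M_{T}^{2}} \, p_{c(T)^{-1} t}(x - y)\;.
\end{equs}

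From here the two estimates are essentially immediate. For the first, integrating the lower bound in $y$ and using $\int_{\TT^{d}} p_{c(T) t}(x - y) \ud y = 1$ gives
\begin{equs}
c_{K}(0,t) \;\geqslant\; C(T)^{-1} \, e^{- \| Y_{0,t} \|_{\infty} - c(T) M_{T}^{2}}\;, \qquad \forall \, 0 < t \leqslant T\;,
\end{equs}
so that $\sup_{0 \leqslant t \leqslant T} c_{K}(0,t)^{-\eta} \leqslant C(T)^{\eta} \exp\bigl( \eta \sup_{0 \leqslant t \leqslant T} \| Y_{0,t}\|_{\infty} + \eta \, c(T) M_{T}^{2}\bigr)$. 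For the second, on the torus $p_{c(T) T}$ and $p_{c(T)^{-1} T}$ are bounded between positive constants $c_{\min}(T)$ and $c_{\max}(T)$, so one obtains
\begin{equs}
\min_{x,y} K_{0,T}(x,y) &\;\geqslant\; c_{\min}(T) \, C(T)^{-1} \, e^{- \| Y_{0,T}\|_{\infty}- c(T) M_{T}^{2}}\;,\\
\max_{x,y} K_{0,T}(x,y) &\;\leqslant\; c_{\max}(T) \, C(T) \, e^{\| Y_{0,T}\|_{\infty} + c(T) M_{T}^{2}}\;.
\end{equs}
Taking the $\pm \eta$ powers and invoking Lemma~\ref{lem:X}, which provides $\EE[\exp( \alpha_{T} M_{T}^{2})] < \infty$ for some $\alpha_{T} > 0$ and hence (via $\| \cdot \|_{\infty} \leqslant \| \cdot \|_{\mC^{1}}$) also exponential moments of $\sup_{t \leqslant T} \| Y_{0,t} \|_{\infty}$, closes the estimates provided $\eta$ is chosen small enough that $\eta \, c(T) < \alpha_{T}$.

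The main technical obstacle is the last point: the heat kernel bounds inevitably produce a factor $e^{c(T) M_{T}^{2}}$, and Fernique's inequality only provides \emph{some} level of quadratic exponential integrability for $M_{T}$. One therefore has to track $c(T)$ carefully through the parametrix expansion of Lemma~\ref{lem:pvz} to guarantee that a threshold $\eta(T) = \alpha_{T}/(2 c(T)) > 0$ can be chosen; everything else is a deterministic manipulation of Gaussian kernels and of the exponential representation $K = e^{Y} \overline{K}$.
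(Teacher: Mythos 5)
Your argument is correct and follows essentially the same route as the paper: reduce to $\tau = 0$ by the strong Markov property, pass to the exponentially transformed kernel $\overline{K}$ via $K_{0,t}(x,y) = e^{Y_{0,t}(x)}\,\overline{K}_{0,t}(x,y)$, invoke the two-sided heat-kernel estimates of Lemma~\ref{lem:pvz} for the drift equation (absorbing the bounded zeroth-order potential $|\nabla Y|^{2} - \tfrac12\kappa_{\mathrm{tr}}$ into an exponential factor), and close with the Gaussian exponential moments of Lemma~\ref{lem:X} after choosing $\eta(T)$ small enough relative to the Fernique constant $\alpha_{T}$. The only cosmetic difference is that the paper establishes the lower bound by a super-solution comparison (discarding the favorable $|\nabla Y|^{2}$ term rather than estimating it), but this leads to the same bounds.
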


\begin{proof}
Without loss of generality, by the strong Markov property of $ u $, let us assume $ \tau=0 $.
We observe that by \eqref{e:v2}, the process $ \overline{\Phi} $ defined in \eqref{e:v} is a
super-solution to 
\begin{equation*}
\begin{aligned}
\partial_{t} \overline{\Phi} \geqslant \Delta \overline{\Phi} + 2 \nabla {Y}
\cdot \nabla \overline{\Phi} - \frac{\kappa_{\mathrm{tr}}}{2} \overline{\Phi} \;.
\end{aligned}
\end{equation*}
Therefore, the kernel $ K_{0, t} (x, y) $ is
lower bounded by
\begin{equs}[e:lbkernel]
K_{0, t} (x, y) \geqslant  e^{- \| {Y}_{t} \|_{\infty} - \frac{\| \kappa_{\tr}
\|_{\infty}}{2} t } \, \Gamma_{0, t} (x, y) \;,
\end{equs}
where $ {Y}_{t}  $ is the solution to \eqref{e:X} with initial condition $
{Y}_{0} = 0 $ and $ \Gamma $ is the fundamental solution associated to
\begin{equs}[e:drift]
\partial_{t} \Gamma  =  \Delta \Gamma  + 2 \nabla Y \cdot \nabla \Gamma \;,
\qquad \Gamma_{0,0}(\cdot, y) = \delta_{y}(\cdot) \;.
\end{equs}
Now we can lower bound the fundamental solution $ \Gamma $ through the quantitative
heat kernel estimates in \cite[Theorem 1.1]{PerkowskiVanZuijlen23HeatKernel},
which we conveniently recall below, in Lemma~\ref{lem:pvz}. In particular,
recalling that $ \overline{\Phi}_{t} =e^{-{Y_{0,t}}} \Phi_{t}$, we
deduce 
\begin{equs}\label{k}
\int_{\TT^{d}} K_{0, t}(x, y) \ud y & \geqslant e^{- \| {Y}_{t} \|_{\infty}
-\frac{ \| \kappa_{\tr}
\|_{\infty}}{2} t } \int_{\TT^{d}} \Gamma_{0, t} (x, y) \ud y \\
& \geqslant c  \exp \left(- \| {Y}_{t} \|_{\infty} - \frac{ \| \kappa_{\tr}
\|_{\infty}}{2} t - C \, t\| \nabla {Y} \|_{L^{\infty}([0, t] \times
\TT^{d})}^{2} \right) \;,
\end{equs}
with the constants $ c , C > 0  $ as in Lemma~\ref{lem:pvz}. In view of this
inequality, the claimed result is a consequence of Lemma~\ref{lem:X}.
Indeed, we can estimate the first norm $ \| Y_{t} \|_{\infty} $ by
Young's inequality to obtain
\begin{equs}
\EE \left[ \sup_{0 \leqslant t \leqslant T } c(0, t)^{- \eta} \right]
& \lesssim e^{ \frac{ \| \kappa_{\mathrm{tr}} \|_{\infty}}{2}T } \EE
\left[ \exp \left(   \eta ( CT +1) \| Y \|_{L^{\infty}([0, T];
\mC^{1}(\TT^{d}))}^{2} \right) \right] \lesssim_{T} 1
\end{equs}
where the last bound follows from Lemma~\ref{lem:X}, provided that $ \eta
\leqslant \alpha_{T}(C T +1)^{-1}$.

The proof of the second claim follows analogously by using both the upper and
the lower bound in Lemma~\ref{lem:pvz}, together with the fact that the
periodic heat kernel $ p(t, x) $ satisfies for some constant $ c(t) >0 $ that $
p(t, x) \geqslant c(t) $ for all $ x \in \TT^{d} $. 

\end{proof}
To conclude this section, we recall the quantitative heat kernel estimate
presented in \cite[Theorem 1.1]{PerkowskiVanZuijlen23HeatKernel}. We observe
that the following statement corresponds to the case $ \alpha = 0 $ in the
cited result. Although the authors assume in the statement $ \alpha > 0 $, the
same result and proof holds with $ \alpha = 0 $, by replacing the $
\mC^{\alpha} $ norm with the $ L^{\infty} $ norm.
\begin{lemma}\label{lem:pvz}
Let $ \Gamma $ be the fundamental solution to \eqref{e:drift}. Then there
exist (deterministic) constants $ c, C > 0 $ such that for every $ s \geqslant
0 $ and all $ i \in \{ 1, \dots, d \} $ and $ \mu \in \{ 0, 1 \} $
\begin{equation*}
\begin{aligned}
\Gamma_{s, t} (x, y) & \geqslant c \, p (c (t-s), x-y) \exp \left( - C
\, (t-s) \| \nabla {Y}_{s , \cdot} \|_{L^{\infty}([s, t] \times \TT^{d})}^{2} \right) \;, \\
|\partial_{x_{i}}^{\mu}  \Gamma_{s, t} (x, y) |& \leqslant  C \, (t^{-
\frac{\mu}{2}} \vee 1) p (c (t-s), x-y) \exp \left(  C
\, (t-s) \| \nabla {Y}_{s , \cdot} \|_{L^{\infty}([s, t] \times \TT^{d})}^{2} \right) \;,
\end{aligned}
\end{equation*}
where $ p $ is the fundamental solution to the periodic heat equation.
\end{lemma}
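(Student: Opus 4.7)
The plan is to use the parametrix method to compare $\Gamma$ to the periodic heat kernel $p$, following the computation already carried out in the proof of Lemma~\ref{uy}. Conditional on the realization of $Y$, the operator $\Delta + 2 \nabla Y \cdot \nabla$ is a perturbation of $\Delta$ whose drift coefficient is bounded pointwise by $M := \|\nabla Y\|_{L^\infty([s,t] \times \TT^d)}$, and the claim is that the associated fundamental solution $\Gamma_{s,t}$ is squeezed between two Gaussian kernels with multiplicative correction $\exp(\pm C (t-s) M^2)$.

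For the upper bound, I would expand
\begin{equs}
\Gamma_{s,t}(x,y) = p(t-s, x-y) + \sum_{k \geqslant 1} (p \star \Xi^{(k)}_y)_{t-s}(x),
\end{equs}
where $\Xi^{(1)}_y(r, z) = 2 \nabla Y(s+r, z) \cdot \nabla_z p(r, z-y)$ and the $\Xi^{(k)}$ are defined iteratively as in \eqref{e:parametrix}. Using the pointwise bound $|\nabla p(r, z)| \lesssim r^{-1/2} p(cr, z)$, an induction identical to the one leading to \eqref{e:aim-induction} yields
\begin{equs}
|\Xi^{(k)}_y(r, z)| \leqslant \frac{(C M)^{k}}{\sqrt{k!}}\, r^{k/2 - 1}\, p(cr, z - y).
\end{equs}
Summing the series produces the upper bound $\Gamma_{s,t}(x, y) \leqslant C' \exp(C(t-s) M^2)\, p(c(t-s), x-y)$. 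The derivative bound with $\mu = 1$ is obtained by differentiating each term in $x$, losing a factor $(t-s)^{-1/2}$ from $\nabla_x p$, exactly as in \eqref{e:grad}.

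For the lower bound, the same perturbative expansion does not directly apply, since the $\Xi^{(k)}_y$ are sign-indefinite. The cleanest route is via the probabilistic representation: $\Gamma_{s,t}(x,\cdot)$ is the density at time $t$ of the solution to $dX_r = \sqrt{2}\, dB_r + 2 \nabla Y(r, X_r)\, dr$ started at $x$ at time $s$. Performing a Girsanov transformation to remove the drift, conditioning on the endpoint, and applying Jensen's inequality to the resulting Brownian bridge exponential yields
\begin{equs}
\Gamma_{s,t}(x,y) \geqslant p(c(t-s), x-y) \exp\bigl(-C (t-s) M^2\bigr),
\end{equs}
after rescaling the Gaussian constant to absorb the bridge drift.

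The main obstacle will be making the Brownian bridge lower bound fully rigorous and uniform in the endpoints $x, y$; this is precisely the content of Theorem~1.1 in \cite{PerkowskiVanZuijlen23HeatKernel}. The only modification required is the replacement of the H\"older norm $\|\cdot\|_{\mC^\alpha}$ appearing there (for $\alpha > 0$) by the $L^\infty$ norm, which only simplifies the argument since no pointwise Schauder estimates are invoked. The stated estimate therefore follows directly from the cited work with $\alpha = 0$.
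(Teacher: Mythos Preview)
Your proposal is correct and takes essentially the same approach as the paper: both defer to \cite[Theorem~1.1]{PerkowskiVanZuijlen23HeatKernel} with the observation that the case $\alpha = 0$ goes through by replacing the $\mC^{\alpha}$ norm with the $L^{\infty}$ norm. The paper in fact gives no proof beyond this citation, whereas you additionally sketch the parametrix expansion for the upper bound and the Girsanov--bridge argument for the lower bound, which is an accurate outline of the method in the cited reference.
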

Next, we can obtain the following upper bound on the solution map $ \Phi $.
This result will be the elementary consequence of a maximum principle.
\begin{lemma}\label{lem:ub}
For any $ 0 \leqslant s \leqslant T $ and for every $ w \in C(\TT^{d}; [0,
\infty)) $ we can bound
\begin{equs}
\sup_{ s \leqslant t \leqslant T} \| \Phi_{s, t} [w] \|_{\infty}
\leqslant  \exp \left( \| {Y}_{s, \cdot} \|_{L^{\infty}([s, T] \times \TT^{d})}
+ (T -s) \|
\nabla Y_{s, \cdot}  \|^{2}_{L^{\infty}([s, T] \times \TT^{d})}  \right) \|
w \|_{\infty} \;,
\end{equs}
where $ t \mapsto {Y}_{s, t} $ is the solution to \eqref{e:X} with initial condition $
{Y}_{s} = 0 $. 
\end{lemma}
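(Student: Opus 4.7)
The plan is to exploit the exponential transformation $\overline{\Phi}_{s,t} = e^{-Y_{s,t}} \Phi_{s,t}$ introduced in \eqref{e:v}, which reduces the linear SPDE for $\Phi$ to the PDE with random coefficients \eqref{e:v2}, i.e.
\begin{equs}
\partial_t \overline{\Phi} = \Delta \overline{\Phi} + 2 \nabla Y \cdot \nabla \overline{\Phi} + \Big( |\nabla Y|^2 - \tfrac{1}{2} \kappa_{\mathrm{tr}} \Big) \overline{\Phi}\;.
\end{equs}
Pathwise, Lemma~\ref{lem:X} ensures that $Y \in L^\infty([s,T]; \mC^1(\TT^d))$ almost surely, so this is a linear parabolic equation with bounded drift and zero-order coefficient, and a classical maximum principle applies.

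First I would apply a standard parabolic maximum principle (via a Gronwall argument at the maximum, or equivalently by multiplying by $e^{-\int_s^t \| c_r \|_\infty \,\ud r}$ with $c = |\nabla Y|^2 - \kappa_{\mathrm{tr}}/2$) to deduce
\begin{equs}
\sup_{s \leqslant t \leqslant T} \| \overline{\Phi}_{s,t} \|_\infty \leqslant \exp\left( \int_s^T \big\| |\nabla Y_{s,r}|^2 - \tfrac{1}{2}\kappa_{\mathrm{tr}}\big\|_\infty^+ \,\ud r \right) \| w \|_\infty \leqslant \exp\Big( (T-s) \| \nabla Y_{s,\cdot} \|_{L^\infty([s,T]\times\TT^d)}^2 \Big) \| w \|_\infty\;,
\end{equs}
where in the second step I drop the non-positive $-\kappa_{\mathrm{tr}}/2$ term (which only improves the bound, since $\kappa_{\mathrm{tr}} \geqslant 0$ by positive-definiteness of the correlation). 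Note that the drift term $2 \nabla Y \cdot \nabla \overline{\Phi}$ plays no role in the maximum principle since it vanishes where $\overline{\Phi}$ attains its spatial maximum.

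Finally I would invert the transformation: since $\Phi_{s,t} = e^{Y_{s,t}} \overline{\Phi}_{s,t}$, I pick up a factor $\exp(\| Y_{s,t} \|_\infty)$, giving
\begin{equs}
\| \Phi_{s,t}[w] \|_\infty \leqslant \exp\big( \| Y_{s,t} \|_\infty \big) \| \overline{\Phi}_{s,t} \|_\infty \leqslant \exp\Big( \| Y_{s,\cdot} \|_{L^\infty([s,T]\times\TT^d)} + (T-s) \| \nabla Y_{s,\cdot} \|_{L^\infty([s,T]\times\TT^d)}^2 \Big) \| w \|_\infty\;,
\end{equs}
and taking the supremum over $t \in [s,T]$ yields the claim. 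There is no real obstacle here: the only mildly delicate point is to justify the maximum principle at the level of mild solutions of \eqref{e:v2}, but since $Y$ is $\mC^1$ in space almost surely and $w$ is continuous, the solution $\overline{\Phi}$ is classical for $t > s$ and the argument is standard.
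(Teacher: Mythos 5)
Your proof is correct and follows exactly the same route as the paper: use the transformation $\overline{\Phi}_{s,t}=e^{-Y_{s,t}}\Phi_{s,t}$, apply the parabolic maximum principle to the random-coefficient PDE \eqref{e:v2} (discarding the non-positive $-\kappa_{\mathrm{tr}}/2$ term), and then undo the transformation to pick up the $e^{\|Y\|_\infty}$ factor. The paper's proof is just a compressed statement of this same argument.
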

\begin{proof}
From the definition of $ \Phi $, we find that $ \| \Phi_{s,t}[w] \|_{\infty}
\leqslant e^{\| Y_{s, t} \|_{\infty} }\| \overline{\Phi}_{s, t}[w] \|_{\infty}
$. As for $ \overline{\Phi} $, we can use a maximum principle applied to
\eqref{e:v2} to obtain for any $ w \in L^{\infty} $:
\begin{equs}
\| \overline{\Phi}_{s, t}[w] \|_{\infty} \leqslant \exp \left( (t-s) \sup_{s \leqslant r
\leqslant t } \| \nabla Y_{s, r} \|_{\infty}^{2} \right) \| w \|_{\infty} \;.
\end{equs}
This concludes the proof.
\end{proof}
As a consequence of this result, we can obtain upper bounds on certain exit
times.

\begin{lemma}\label{lem:lb-st}
Fix any two parameters $ 0 \leqslant \alpha < \beta < \infty $. For any stopping time $ \sigma $, let $ w $ be and $ \mF_{\sigma} $-measurable
initial condition in $ C(\TT^{d}; [0, \infty)) $ such that $ \| w
\|_{\infty} \leqslant \alpha $. Then, if we define
\begin{equs}
\tau^{\beta}(\sigma, w) = \inf \{ t \geqslant \sigma  \; \colon \; \| \Phi_{\sigma, t}
[w] \|_{\infty} \geqslant \beta \}  \;,
\end{equs}
it holds that for any $ \zeta > 0 $ there exists a deterministic constant $
C (\alpha, \beta, \zeta) > 0 $ (but independent of $ w $) such that
\begin{equs}
\EE_{\sigma} \left[ \left(\tau^{\beta}(\sigma, w) - \sigma\right)^{- \zeta} \right] \leqslant C(\alpha, \beta,
\zeta ) \;.
\end{equs}
\end{lemma}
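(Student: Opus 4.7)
The plan is to use the strong Markov property of the linear SPDE \eqref{e:Phi} to reduce, at the cost of a time shift, to the case $\sigma = 0$ with $w \in C(\TT^d; [0,\infty))$ deterministic and $\|w\|_\infty \leqslant \alpha$. Since the negative moment splits as
\[
\EE[(\tau^\beta)^{-\zeta}] \leqslant 1 + \zeta \int_0^1 h^{-\zeta - 1} \PP(\tau^\beta \leqslant h)\ud h\;,
\]
the lemma reduces to showing that $\PP(\tau^\beta \leqslant h)$ decays faster than any polynomial as $h \downarrow 0$, uniformly over $\|w\|_\infty \leqslant \alpha$; the resulting bound is deterministic and may then be transferred back to arbitrary $\mF_\sigma$-measurable $w$.

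To obtain such decay, I invoke Lemma~\ref{lem:ub}: on the event $\{\tau^\beta \leqslant h\}$ one must have
\[
\log(\beta/\alpha) \leqslant \|Y\|_{L^\infty([0,h]\times\TT^d)} + h \|\nabla Y\|^2_{L^\infty([0,h]\times\TT^d)}\;,
\]
which transfers the problem onto the Gaussian process $Y$ solving \eqref{e:X} with zero initial datum at time $0$. Splitting the bad event into the two summands on the right-hand side being at least $\tfrac{1}{2}\log(\beta/\alpha)$ reduces matters to controlling tails of $\|Y\|_{L^\infty([0,h]; \mC^1)}$ in the short-time regime.

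The key estimate is the short-time scaling $\EE[\|Y\|^2_{L^\infty([0,h]; \mC^1)}] \lesssim \sqrt{h}$, a refinement of Lemma~\ref{lem:X} obtained by retaining the $h$-dependence in the covariance computation $\EE[\partial_{x_i} Y(t,x) \partial_{y_j} Y(t,y)] \lesssim \sqrt{t}$ and upgrading it to the supremum norm via the factorisation method. Fernique's theorem applied to the centered Gaussian variable $\|Y\|_{L^\infty([0,h]; \mC^1)}$ then yields concentration of the form
\[
\PP(\|Y\|_{L^\infty([0,h]; \mC^1)} \geqslant \lambda) \leqslant C\exp(-c\lambda^2/\sqrt{h})\;,
\]
uniformly in $h \in (0,1]$. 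Taking $\lambda = \tfrac{1}{2}\log(\beta/\alpha)$ for the first summand and $\lambda = \sqrt{\log(\beta/\alpha)/(2h)}$ for the second produces stretched-exponential decay of type $\exp(-c/\sqrt{h})$ for $\PP(\tau^\beta \leqslant h)$.

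The main (and essentially the only) technical point is the verification of the short-time scaling of $\EE[\|Y\|^2_{L^\infty([0,h]; \mC^1)}]$ with the correct power of $h$; the remaining steps are routine Gaussian estimates already present in the proof of Lemma~\ref{lem:X}. Once this is in hand, the integrand $h^{-\zeta - 1}\exp(-c/\sqrt{h})$ is integrable on $(0,1]$ for every $\zeta > 0$, and the resulting deterministic constant depends only on $\alpha$, $\beta$ and $\zeta$, as required.
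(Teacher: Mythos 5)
Your proposal is correct in spirit but follows a genuinely different route from the paper's. The paper also reduces to $\sigma = 0$ and uses Lemma~\ref{lem:ub} in exactly the same way, but then the two arguments diverge. The paper introduces the parabolic H\"older--Lipschitz norm $\|Y\|_{\mC^1_{\mathrm{par},t}}$ (which combines the time-H\"older-$1/2$ modulus of $Y$ with the spatial gradient bound) and uses the observation that $Y_{0,0}=0$ to extract $\sup_{s \leqslant t}\|Y_s\|_\infty \leqslant \sqrt{t}\,\|Y\|_{\mC^1_{\mathrm{par},t}}$; then a single application of Markov's inequality to polynomial moments of the fixed-window quantity $\|Y\|_{\mC^1_{\mathrm{par},1}}$ gives $\PP(\tau^\beta < t)\lesssim_{\alpha,\beta,\zeta} t^\zeta$ without any $h$-dependent Gaussian-concentration constants. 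You instead push the $h$-dependence into the variance estimate $\EE[\|Y\|^2_{L^\infty([0,h];\mC^1)}]\lesssim\sqrt{h}$ and then invoke Fernique's theorem. This works, but note two things. First, Fernique is overkill: polynomial moments (via Gaussian hypercontractivity from the claimed $L^2$ scaling) already produce super-polynomial decay of $\PP(\tau^\beta\leqslant h)$, which is all the negative-moment integral requires — the stretched-exponential tail buys you nothing. Second, the sharp scaling $\sqrt{h}$ of the Gaussian supremum may acquire a logarithmic correction when upgrading the pointwise variance $\EE[|\nabla Y(t,x)|^2]\lesssim\sqrt{t}$ to a sup over $[0,h]\times\TT^d$; this would not break your argument (the exponent still diverges), but it is a loose end your outline does not flag, and it is precisely the $h$-dependence that the paper's parabolic-H\"older trick is designed to avoid by working on the fixed window $[0,1]$. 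Both approaches ultimately rest on the same underlying fact — that $Y$ vanishes at time zero so its sup norm grows like $\sqrt{t}$ — and both require extending Lemma~\ref{lem:X} to control time regularity; the paper's version packages that extension more economically.
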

\begin{proof}
Let us write for short $ \tau^{\beta} = \tau^{\beta}(\sigma, w) $ and assume
that $ \sigma = 0 $ by the strong Markov property of $ \Phi_{s,t} w $. It suffices
to control, for $ t \in (0, 1) $ the probability $ \PP ( \tau^{\beta}
\leqslant  t ) $.
Here we can use Lemma~\ref{lem:ub} to find
\begin{equs}
\PP (\tau^{\beta} < t ) \leqslant \PP \left(  \sup_{0 \leqslant s
\leqslant t} \| Y_{0, s} \|_{\infty} + t
\sup_{0 \leqslant s \leqslant t} \| \nabla Y_{0, s} \|^{2}_{\infty}
\geqslant \log{(\beta / \alpha)}  \right) \;.
\end{equs}
Now, let us introduce the parabolic H\"older--Lipschitz space \(
\mC^{1}_{\mathrm{par}, t} \) of functions on $ [0, t] \times \TT^{d} $ via the norm
\begin{equs}
\| \varphi \|_{\mC^{1}_{\mathrm{par}, t}} = \sup_{0 \leqslant s < r \leqslant t
} \frac{\| \varphi_{r} - \varphi_{s} \|_{\infty}}{\sqrt{| r-s |}} +
\sup_{0 \leqslant s \leqslant t} \| \nabla \varphi_{s} \|_{\infty}\;.
\end{equs}
Then, the above probability can be upper bounded by
\begin{equs}[e:prob-bd]
\PP \left( \sqrt{t} \| Y_{0, \cdot} \|_{\mC^{1}_{\mathrm{par}, t}} +
\left( \sqrt{t} \| Y_{0, \cdot} \|_{\mC^{1}_{\mathrm{par}, t}} \right)^{2} \geqslant
\log{(\alpha / \beta)} \right) \lesssim_{\alpha, \beta, \zeta} t^{\zeta}\;,
\end{equs}
where the last inequality follows from Markov's inequality, for any $ \zeta > 0
$, since $ \EE \left[ \| Y_{0, \cdot} \|_{\mC^{1}_{\mathrm{par}, 1}}^{2 \zeta}
\right] < \infty $: this fact follows analogously to Lemma~\ref{lem:X}, by
considering in addition the time-regularity of the solution. The moment
estimate is then an immediate consequence of \eqref{e:prob-bd}.
\end{proof}

\bibliographystyle{plain}

\appendix

\end{document}